\theoremstyle{plain}
\newtheorem{proposition}{Proposition}[section]
\newtheorem{theorem}[proposition]{Theorem}
\newtheorem{lemma}[proposition]{Lemma}
\newtheorem{corollary}[proposition]{Corollary}
\theoremstyle{definition}
\newtheorem{example}[proposition]{Example}
\newtheorem{definition}[proposition]{Definition}
\theoremstyle{remark}
\newtheorem{remark}[proposition]{Remark}
\newtheorem{conjecture}[proposition]{Conjecture}
\newtheorem*{problem}{Problem}
\DeclareMathOperator{\Aut}{Aut}
\DeclareMathOperator{\SL}{SL}
\DeclareMathOperator{\GL}{GL}
\DeclareMathOperator{\Gr}{Gr}
\DeclareMathOperator{\SO}{SO}
\DeclareMathOperator{\UU}{U}
\DeclareMathOperator{\PU}{PU}
\DeclareMathOperator{\PP}{P}
\DeclareMathOperator{\PGL}{PGL}
\DeclareMathOperator{\End}{End}
\DeclareMathOperator{\Spanset}{Span} 
\DeclareMathOperator{\Proj}{Proj} 
\DeclareMathOperator{\Id}{Id} 
\DeclareMathOperator{\rank}{rank} 
\DeclareMathOperator{\Lin}{Lin} 
\DeclareMathOperator{\hol}{hol}
\DeclareMathOperator{\dev}{dev} 
\DeclareMathOperator{\Isom}{Isom} 
\DeclareMathOperator{\id}{id}
\DeclareMathOperator{\Ac}{\mathcal{A}}
\DeclareMathOperator{\Bc}{\mathbb{B}}
\DeclareMathOperator{\Cc}{\mathcal{C}}
\DeclareMathOperator{\Oc}{\mathcal{O}}
\DeclareMathOperator{\Ab}{\mathbb{A}}
\DeclareMathOperator{\Cb}{\mathbb{C}}
\DeclareMathOperator{\Hb}{\mathbb{H}}
\DeclareMathOperator{\Kb}{\mathbb{K}}
\DeclareMathOperator{\Nb}{\mathbb{N}}
\DeclareMathOperator{\Pb}{\mathbb{P}}
\DeclareMathOperator{\Rb}{\mathbb{R}}
\DeclareMathOperator{\gL}{\mathfrak{g}}
\newcommand{\abs}[1]{\left|#1\right|}
\newcommand{\norm}[1]{\left\|#1\right\|}
\newcommand{\wt}[1]{\widetilde{#1}}
\newcommand{\wh}[1]{\widehat{#1}}
\begin{document}

\title[Proper quasi-homogeneous domains]{Proper quasi-homogeneous domains in flag manifolds and geometric structures}
\author{Andrew Zimmer}\address{Department of Mathematics, College of William and Mary, Williamsburg, VA 23185.}
\email{amzimmer@wm.edu}
\date{\today}
\keywords{Real projective manifolds, geometric structures, Kobayashi metric, Carath{\'e}odory metric, parabolic subgroups, projective automorphism group}
\subjclass[2010]{}

\begin{abstract} 
In this paper we study domains in flag manifolds which are bounded in an affine chart and whose projective automorphism group acts co-compactly. In contrast to the many examples in real projective space, we will show that no examples exist in many flag manifolds. Moreover, in the cases where such domains can exist, we show that they satisfy a natural convexity condition and have an invariant metric which generalizes the Hilbert metric. As an application we give some restrictions on the developing map for certain $(G,X)$-structures. 
 \end{abstract}

\maketitle

\section{Introduction}

Suppose $G$ is a connected semisimple Lie group with trivial center and without compact factors. If $P \leq G$ is a parabolic subgroup, then $G$ acts by 
diffeomorphisms on the compact manifold $G/P$. Given an open set $\Omega \subset G/P$ we define the \emph{automorphism group of $\Omega$} to be 
\begin{align*}
\Aut(\Omega) = \{ g \in G : g \Omega =\Omega\}.
\end{align*}
An open set $\Omega \subset G/P$ is called \emph{quasi-homogeneous} if $\Aut(\Omega) \cdot K = \Omega$ for some compact subset $K \subset \Omega$. This paper is concerned with the geometry and classification of quasi-homogeneous domains.

\subsection{Uniformization} One motivation for studying quasi-homogeneous domains comes from the well known uniformization of Riemann surfaces: any Riemann surface $\Sigma$ can be identified with a quotient $\Gamma \backslash \Omega$ where $\Omega$ is a domain in the complex projective plane $\Pb(\Cb^2)$ and $\Gamma \leq \PGL_{2}(\Cb)$ is a discrete group which acts freely and properly discontinuously on $\Omega$. 

It seems natural to ask what happens in higher dimensions. In particular given a closed manifold $M$, can we identify $M$ with a  quotient $\Gamma \backslash \Omega$ where $\Omega$ is a domain in some flag manifold $G/P$ and $\Gamma \leq \Aut(\Omega)$? Since $M$ is compact, in this case the domain $\Omega$ will be quasi-homogeneous. 

\subsection{$(G,X)$-structures} Another (related) motivation for studying quasi-homogeneous domains comes from the theory of geometric structures on compact manifolds.

Suppose $G$ is a Lie group acting transitively on a manifold $X$.  A \emph{$(G,X)$-structure} on a manifold $M$ is an open cover $M = \cup_{\alpha} U_\alpha$ along with coordinate charts $\varphi_\alpha : U_{\alpha} \rightarrow X$ such that the transition functions $\varphi_{\alpha} \circ \varphi_{\beta}^{-1}$ coincide locally with the restriction of an element in $G$ on $\varphi_{\beta}(U_\beta \cap U_{\alpha})$. 

Given a $(G,X)$-structure on a manifold $M$, one can ``unfold'' the structure to obtain a local diffeomorphism $\dev : \wt{M} \rightarrow X$ from the universal cover $\wt{M}$ of $M$ to $X$ called the \emph{developing map} and a homomorphism $\hol: \pi_1(M,m) \rightarrow G$ called the \emph{holonomy map}. The map $\dev$ will be $\hol$-equivariant and when $\dev$ is a diffeomorphism onto its image we can identify $M$ with $\Gamma \backslash \Omega$ where $\Gamma = \hol(\pi_1(M,m))$ and $\Omega = \dev(\wt{M})$. See for instance~\cite{G1988} for more details.

When $M$ is closed, the group $\hol(\pi_1(M,m))$ acts co-compactly on $\dev(\wt{M})$ and thus $\dev(\wt{M})$ is a quasi-homogeneous domain in $X$. Our results about quasi-homogeneous domains will imply the following.

\begin{theorem}\label{thm:geom_str_main}
 Suppose $M$ is a closed manifold, $G$ is a connected non-compact simple Lie group with trivial center, and $P \leq G$ is a non-maximal parabolic subgroup. If $\{ (U_\alpha, \varphi_\alpha)\}_{\alpha \in \Ac}$ is a $(G,G/P)$-structure on $M$, then the image of the developing map cannot be bounded in an affine chart.
\end{theorem}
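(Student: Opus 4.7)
\emph{Reduction to the quasi-homogeneous setting.} Set $\Omega = \dev(\wt{M})$ and $\Gamma = \hol(\pi_1(M,m))$. Because $M$ is closed, one can choose a compact fundamental domain $F \subset \wt{M}$ for the deck action; setting $K = \dev(F) \subset \Omega$ gives a compact set, and the $\hol$-equivariance of $\dev$ forces $\Gamma \cdot K = \Omega$. Thus $\Gamma \subset \Aut(\Omega)$ exhibits $\Omega$ as a quasi-homogeneous open subset of $G/P$, as already noted in the paragraph preceding the theorem statement. The theorem is therefore equivalent to the assertion that no quasi-homogeneous $\Omega \subset G/P$ can be bounded in an affine chart when $P$ is non-maximal, so the plan is to prove this contrapositive.

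\emph{Main strategy.} The plan is to reduce to the case of a maximal parabolic, which is the setting in which the paper's main structural results for proper quasi-homogeneous domains live (as advertised in the abstract). Since $P$ is non-maximal, there exist distinct maximal parabolics $Q_1, Q_2 \supsetneq P$ and $G$-equivariant projections $\pi_i : G/P \to G/Q_i$ with positive-dimensional connected fibers. I would first show that $\pi_i(\Omega) \subset G/Q_i$ inherits enough structure from $\Omega$ to be itself a quasi-homogeneous domain in $G/Q_i$, using that $\pi_i$ intertwines the $G$-actions and that $\pi_i$ is proper on the appropriate Bruhat cell. Next, I would argue that boundedness of $\Omega$ in an affine chart of $G/P$ (i.e.\ the big open cell of the opposite parabolic $P^-$) forces each $\pi_i(\Omega)$ to be bounded in the corresponding affine chart of $G/Q_i$, since the Bruhat cell of $P^-$ projects into the Bruhat cell of $Q_i^-$ in a way controlled by the Levi decomposition. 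If the paper's main results already exclude proper quasi-homogeneous domains in $G/Q_i$ for some $i$, this finishes the argument; otherwise one has to combine the two projections $\pi_1, \pi_2$.

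\emph{Main obstacle and fallback.} The difficult step is the interaction between the fiber direction $Q_i/P$ and the co-compact $\Gamma$-action: co-compactness on $\Omega$ does not automatically give co-compactness on $\pi_i(\Omega)$, because $\Gamma$-orbits along the fiber could fail to stay in a compact set after projection. The natural way around this is to analyze a sequence $\gamma_n \in \Gamma$ with a Cartan decomposition $\gamma_n = k_n a_n l_n$ whose $a_n$-part escapes in a direction transverse to the center of the Levi of $P$ but not transverse to that of $Q_i$; such sequences produce limit points of $\pi_i(\Omega)$ in the boundary of the affine chart of $G/Q_i$ that cannot come from boundary points of $\Omega$ in the $G/P$-affine chart, yielding the contradiction. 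I expect this dynamical/Cartan-projection analysis, rather than the reduction itself, to be the genuine content of the proof, and it is exactly what the rest of the paper must develop in order for Theorem~\ref{thm:geom_str_main} to follow as a clean corollary.
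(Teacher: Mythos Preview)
Your first paragraph is correct and matches the paper exactly: the theorem reduces immediately to Theorem~\ref{thm:proper_non_maximal}, i.e.\ to showing that no proper quasi-homogeneous domain exists in $G/P$ when $P$ is non-maximal. The projection idea in your second paragraph is also the right move: the paper fixes a single parabolic $P_0 \gneqq P$ (not two maximal ones), projects $\pi: G/P \to G/P_0$, and shows that $\Omega_0 := \pi(\Omega)$ is again a proper quasi-homogeneous domain. Your instinct that boundedness transfers is right, and the paper does this cleanly via the dual: $\pi^*(\Omega^*) \subset \Omega_0^*$, so $\Omega_0^*$ has nonempty interior.

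The genuine gap is in how you extract the contradiction. Your hope that ``the paper's main results already exclude proper quasi-homogeneous domains in $G/Q_i$'' cannot work: for maximal parabolics such domains \emph{do} exist (Example~\ref{ex:symmetric}), so projecting to a maximal parabolic and invoking nonexistence there is a dead end. Your fallback Cartan-projection analysis is not what the paper does, and as stated it does not produce a contradiction either. The key idea you are missing is \emph{properness of the $\Aut(\Omega)$-action on the projected domain $\Omega_0$}. The paper builds a Carath\'eodory-type invariant metric $C_{\Omega_0}$ on any proper domain (Theorem~\ref{thm:cara_metric}) and deduces that $\Aut(\Omega_0)$, hence $\Aut(\Omega)$, acts properly on $\Omega_0$ (Corollary~\ref{cor:proper}). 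The contradiction is then short and topological: the fiber $\pi^{-1}(gP_0) = gP_0/P$ is compact and, by Proposition~\ref{prop:bigger_para}(2), cannot be contained in any affine chart of $G/P$; hence $\Omega$ meets $\partial\Omega$ inside this fiber. Taking $z_n \in \Omega$ in the fiber with $z_n \to z \in \partial\Omega$ and $\varphi_n \in \Aut(\Omega)$ with $\varphi_n^{-1} z_n \in K$, one has $\varphi_n \to \infty$ in $G$ while $\varphi_n^{-1}(gP_0) \in \pi(K)$ stays compact --- contradicting properness on $\Omega_0$. So the heart of the argument is the invariant metric giving proper action, not a Cartan-decomposition dynamical analysis, and only one larger parabolic is needed.
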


Theorem~\ref{thm:geom_str_main} is an immediate consequence of Theorem~\ref{thm:proper_non_maximal} below. 

\begin{theorem}\label{thm:covering_map}
 Suppose $M$ is a closed manifold, $G$ is a connected semi-simple Lie group with trivial center and no compact factors, and $P \leq G$ is a parabolic subgroup. If $\{ (U_\alpha, \varphi_\alpha)\}_{\alpha \in \Ac}$ is a $(G,G/P)$-structure on $M$ and the image of the developing map is bounded in an affine chart, then $\dev : \wt{M} \rightarrow \dev(\wt{M})$ is a covering map.
\end{theorem}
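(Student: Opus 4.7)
The plan is to endow $\Omega := \dev(\wt{M})$ with the invariant distance furnished by the paper's main results, pull it back to a complete metric on $\wt{M}$, and then apply a standard Hopf--Rinow style argument to upgrade the local diffeomorphism $\dev$ to a covering map.

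Setting $\Gamma := \hol(\pi_1(M))$ and using that $M$ is closed together with the $\hol$-equivariance of $\dev$, the subgroup $\Gamma \leq \Aut(\Omega)$ acts cocompactly on $\Omega$. Combined with the hypothesis of boundedness in an affine chart, this places us squarely in the setting of the paper: $\Omega$ is a quasi-homogeneous domain bounded in an affine chart. The results to be established later therefore supply an $\Aut(\Omega)$-invariant distance $d_\Omega$ on $\Omega$---the ``generalized Hilbert metric'' announced in the abstract---which is a proper geodesic metric inducing the manifold topology of $\Omega$.

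Pulling back through the local diffeomorphism $\dev$, I would define the length of an absolutely continuous path $\gamma$ in $\wt{M}$ as the $d_\Omega$-length of $\dev \circ \gamma$, and let $d_{\wt{M}}$ denote the associated path metric. Then $\dev$ becomes a local isometry from $(\wt{M}, d_{\wt{M}})$ into $(\Omega, d_\Omega)$, and by $\hol$-equivariance the deck group $\pi_1(M)$ acts freely, properly discontinuously, and by isometries on $(\wt{M}, d_{\wt{M}})$, with compact quotient $M$. A standard argument---translate a Cauchy sequence into a compact fundamental domain, extract a subsequential limit, and use proper discontinuity to force the translating elements to be eventually constant---now yields completeness of $(\wt{M}, d_{\wt{M}})$.

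With $\wt{M}$ complete, $\Omega$ a locally compact length space, and $\dev$ a local isometry, the classical path-lifting argument (partial lifts of paths in $\Omega$ are Cauchy near their terminal endpoints and extend by completeness) shows that $\dev$ is a covering map. I expect the principal technical input to be the first step: knowing that the invariant metric constructed in the paper's later sections is a genuine proper distance inducing the manifold topology, rather than a degenerate pseudo-metric. Once that is in hand, the remaining steps are essentially Riemannian-geometric boilerplate.
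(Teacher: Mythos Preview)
Your overall strategy---equip $\Omega$ with an invariant metric, pull it back to $\wt{M}$, establish completeness via the cocompact deck action, then conclude covering by path-lifting---matches the paper's. The difference lies in \emph{which} invariant metric is used. You propose the Carath{\'e}odory-type metric $C_\Omega$ of Section~\ref{sec:cara} directly; the paper instead extracts from $C_\Omega$ only the \emph{properness} of the $\Aut(\Omega)$-action (Proposition~\ref{prop:proper_ss}, assembled from Corollary~\ref{cor:proper} on the simple factors) and then invokes Palais' theorem to produce an $\Aut(\Omega)$-invariant \emph{Riemannian} metric on $\Omega$. From there the argument is the standard one (the paper cites Proposition~3.4.10 of Thurston).

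The Riemannian route buys two things. First, $C_\Omega$ is constructed in Section~\ref{sec:cara} only for simple $G$, while Theorem~\ref{thm:covering_map} is stated for semisimple $G$; Proposition~\ref{prop:proper_ss} handles the semisimple case by piecing together properness on the factors, after which Palais applies without further comment. Second, you assert that $C_\Omega$ is a geodesic (length) metric, and your pulled-back length construction and path-lifting step depend on this so that $\dev$ is a local isometry and the induced distance on $\wt{M}$ generates the manifold topology. The paper establishes only that $C_\Omega$ generates the standard topology (Theorem~\ref{thm:cara_metric}) and is complete when $\Omega$ is dual convex (Theorem~\ref{thm:completeness}); it never claims $C_\Omega$ is intrinsic, and for Carath{\'e}odory-type metrics this is generally not expected. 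A Riemannian metric pulls back through a local diffeomorphism with none of these caveats, which is why the paper's route is the cleaner one.
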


In Proposition~\ref{prop:proper_ss} below, we will show that when $\Omega$ is bounded in an affine chart the group $\Aut(\Omega)$ acts properly on $\Omega$. Using this fact it is straightforward to establish  Theorem~\ref{thm:covering_map}, see Section~\ref{sec:covering_map} for details. 

\subsection{Proper domains}

In this paper we will restrict our attention to a particular class of domains in flag manifolds: 

\begin{definition}  An open set $\Omega \subset G/P$ is called a \emph{proper domain} if it is connected and bounded in an affine chart of $G/P$. 
\end{definition}

There are (at least) three reasons for restricting our attention to these domains: 

\begin{enumerate}
\item As mentioned above, every Riemann surface can be identified with a quotient $\Gamma \backslash \Omega$ where $\Omega \subset \Pb(\Cb^2)$ and $\Gamma \leq \Aut(\Omega)$ is a discrete group. In fact, we can always assume that $\Omega$ is either $\Pb(\Cb^2)$, an affine chart in $\Pb(\Cb^2)$, or the unit disk in an affine chart of $\Pb(\Cb^2)$. Each of these three cases lead to very different classes of surfaces, so when seeking higher dimensional analogues of uniformization it makes sense to try and specialize to one of the three cases.
\item  There is a rich theory of \emph{convex divisible domains} in real projective space. A proper convex set $\Omega \subset \Pb(\Rb^{d+1})$ is called \emph{divisible} when there exists a discrete group $\Gamma \leq \Aut(\Omega)$ which acts co-compactly, freely, and properly on $\Omega$. The symmetric domain $\Bc_{d,1}$ defined in Example~\ref{ex:symmetric} below is the fundamental example of a convex divisible domain but there are many non-homogeneous examples, see the survey papers by Benoist~\cite{B2008}, Marquis~\cite{M2013}, and Quint~\cite{Q2010}. It seems very natural to attempt to extend this theory to other flag manifolds and a key feature of these domains is the fact that they are bounded in affine charts. 
\item The fact that we restrict our attention to domains which are bounded in affine charts also allows us to adapt some techniques used in several complex variables to study the bi-holomorphism group of bounded domains in complex Euclidean space (see the survey paper~\cite{IK1999}). 
\end{enumerate}

 As the next example shows certain symmetric spaces give rise to homogeneous proper domains in certain Grassmanians: 

\begin{example}\label{ex:symmetric} Let $\Kb$ be either the real numbers $\Rb$, the complex numbers $\Cb$, or the quaternions $\Hb$. If $G = \PGL_d(\Kb)$ and $P \leq \PGL_d(\Kb)$ is a parabolic subgroup, then $P$ is the stabilizer of some $\Kb$-flag and $G/P$ can be identified with a 
 flag manifold. In the particular case when $P$ is a maximal parabolic subgroup of $G$, then $P$ is the stabilizer some $p$-plane in $\Kb^{d}$. Let $q=d-p$. Then we can identify $G/P$ with $\Gr_p(\Kb^{p+q})$ the Grassmanian of $p$-planes in $\Kb^{p+q}$. 
Let $M_{p+q,p}(\Kb)$ be the space of $(p+q)$-by-$p$ matrices with entries in $\Kb$. We can then identify the quotient manifold
\begin{align*}
\left\{ A \in M_{(p+q), p}(\Kb) : \rank A = p \right\} / \GL_p(\Kb)
\end{align*}
with $\Gr_p(\Kb^{p+q})$ via $A \rightarrow \operatorname{Im}(A)$. Then 
\begin{align*}
\Ab := \left\{ \begin{bmatrix} \Id_p \\ X \end{bmatrix} : X \in M_{q,p}(\Kb)\right\}  \subset \Gr_{p}(\Kb^{p+q})
\end{align*}
is an affine chart of $\Gr_p(\Kb^{p+q})$ and so the open set
\begin{align*}
\Bc_{p,q} := \left\{ \begin{bmatrix} \Id_p \\ X \end{bmatrix} : \norm{X} < 1\right\} \subset \Gr_{p}(\Kb^{p+q})
\end{align*}
is a proper domain. Next let $\UU_{\Kb}(p,q) \leq  \GL_{d}(\Kb)$ be the group which preserves the form
\begin{align*}
\overline{x}_1y_1+\dots + \overline{x}_py_p - \overline{x}_{p+1} y_{p+1} - \dots - \overline{x}_{p+q} y_{p+q}.
\end{align*}
Then $\Aut(\Bc_{p,q})$ coincides with $\PU_{\Kb}(p,q)$, the image of $\UU_{\Kb}(p,q)$ in $\PGL_d(\Kb)$. Moreover $\Aut(\Bc_{p,q})$ acts 
acts transitively on $\Bc_{p,q}$ and the stabilizer of $\begin{bmatrix} \Id_p & 0 \end{bmatrix}^t$ 
is the group $\PP( \UU_{\Kb}(p) \times \UU_{\Kb}(q))$, so we can identify
\begin{align*}
\Bc_{p,q} \cong \PU_{\Kb}(p,q) / \PP( \UU_{\Kb}(p) \times \UU_{\Kb}(q)).
\end{align*}
In particular, $\Bc_{p,q}$ is a geometric model of the symmetric space associated to $ \PU_{\Kb}(p,q)$. When $\Kb=\Rb$ and $q=1$, this is the 
Klein-Beltrami model of real hyperbolic $p$-space. 
\end{example}

\subsection{Non-maximal parabolic subgroups}

In contrast to the above examples, our main rigidity result shows that many flag manifolds have no quasi-homogenous proper domains:

\begin{theorem}\label{thm:proper_non_maximal}(see Section~\ref{sec:main_thm} below)
Suppose $G$ is a connected non-compact simple Lie group with trivial center and $P \leq G$ is a non-maximal parabolic subgroup. If $\Omega \subset G/P$ is a proper domain, then $\Aut(\Omega)$ cannot act co-compactly on $\Omega$. 
\end{theorem}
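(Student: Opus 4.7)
I would argue by contradiction. Suppose $\Omega \subset G/P$ is a proper domain on which $\Aut(\Omega)$ acts co-compactly, with $P$ non-maximal. By Proposition~\ref{prop:proper_ss} the action of $\Aut(\Omega)$ on $\Omega$ is also proper. Fix a compact fundamental set $K \subset \Omega$ with $\Aut(\Omega) \cdot K = \Omega$, and a sequence $p_n \in \Omega$ converging to a boundary point $p_\infty \in \partial \Omega$. Writing $p_n = g_n y_n$ with $g_n \in \Aut(\Omega)$ and $y_n \in K$, properness forces $g_n \to \infty$ in $G$ (after extracting, $y_n \to y_\infty \in K$). So co-compactness plus properness automatically produces divergent sequences in $\Aut(\Omega)$ with controlled dynamical behavior near any boundary point.

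Next I would analyze $g_n$ via the Cartan decomposition $g_n = k_n \exp(H_n) l_n$, with $k_n, l_n$ in a maximal compact $K_G \leq G$, $H_n$ in the closed positive Weyl chamber $\overline{\aL^+}$, and $\|H_n\| \to \infty$. After passing to subsequences, $k_n \to k_\infty$, $l_n \to l_\infty$, and $H_n / \|H_n\| \to \hat H$. The set of simple roots $\Theta_\infty := \{ \alpha \in \Delta : \alpha(\hat H) > 0\}$ controls the asymptotic dynamics of $g_n$ on $G/P$: the standard fact is that outside a Schubert variety $R \subset G/P$ (the ``repeller'') the sequence converges locally uniformly with image inside a Schubert variety $A \subset G/P$ (the ``attractor''), and both $A$ and $R$ admit an explicit description in terms of the Weyl group combinatorics of $\Theta_\infty$ and the type $\Theta_P$ of $P$.

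The non-maximality of $P$, equivalent to $|\Delta \setminus \Theta_P| \geq 2$, is exploited as follows. By choosing $p_n$ approaching different boundary points using co-compactness, one can arrange divergent sequences for which $\Theta_\infty$ is a proper subset of $\Delta \setminus \Theta_P$; in this case the attracting Schubert variety $A$ is strictly larger than a point and contains a non-trivial projective subvariety tangent to the ``extra'' simple root directions. Pulling back via $k_\infty$ and using co-compactness, one concludes that $\overline{\Omega}$ must contain a positive-dimensional complete projective subvariety through $p_\infty$. However, $\Omega$ is bounded in an affine chart $\Ab \cong \Rb^N$ of $G/P$, so any complete projective subvariety contained in $\overline{\Omega}$ must actually lie in the complementary divisor $G/P \setminus \Ab$, i.e., in $\partial \Omega$. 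Varying $p_n$ over a dense set of boundary points and combining the resulting subvarieties produces an obstruction to boundedness in the affine chart, giving the contradiction.

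The main obstacle I anticipate is ensuring that the limiting map really sweeps out a non-trivial portion of the attractor $A$ inside $\overline{\Omega}$, rather than collapsing $K$ to a single point of $A$. Overcoming this requires choosing the sequence $p_n$ so that $\Theta_\infty$ is strictly smaller than $\Delta \setminus \Theta_P$ and then leveraging the full freedom of $K$ (via co-compactness) to produce honest curves in $A \cap \overline{\Omega}$. This is precisely where non-maximality of $P$ is essential: when $P$ is maximal, $\Delta \setminus \Theta_P$ is a singleton, the attractor can degenerate to a single point, and no obstruction is produced — consistent with the existence of homogeneous proper domains such as $\Bc_{p,q}$ in Example~\ref{ex:symmetric}.
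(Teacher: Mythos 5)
Your strategy (Cartan decomposition and attractor--repeller dynamics of divergent sequences on $G/P$) is a legitimate tool in this area, but as written it has two genuine gaps, and the second one you flag yourself. First, you assert that ``one can arrange divergent sequences for which $\Theta_\infty$ is a proper subset of $\Delta \setminus \Theta_P$'' by letting $p_n$ approach various boundary points. Nothing in the hypotheses forces $\Aut(\Omega)$ to contain divergent sequences whose Cartan projections are asymptotically singular in the required directions: a priori every divergent sequence in $\Aut(\Omega)$ could be $P$-regular, with a zero-dimensional attractor, and you give no mechanism by which non-maximality of $P$ rules this out. This is exactly the step where non-maximality must do real work, and it is the step that is missing. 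Second, even granting a positive-dimensional attractor $A$, you need the images $\varphi_n K$ of the whole compact set $K$ (not just of one point) to accumulate on a positive-dimensional piece of $A$; ``leveraging the full freedom of $K$'' is not an argument, since the image of a compact set under such a sequence can still collapse to a single point of $A$. Finally, your concluding sentence is internally inconsistent: since $\Omega$ is bounded in the affine chart $\Ab$, its closure $\overline{\Omega}$ is a compact subset of $\Ab$, so nothing in $\overline{\Omega}$ can lie in $G/P \setminus \Ab$. The statement you actually want is that the relevant compact subvarieties are never contained in \emph{any} affine chart, so their presence in $\overline{\Omega}$ would already be the contradiction.

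For comparison, the paper's proof avoids dynamics on $\aL$ entirely. It fixes a parabolic $P_0 \gneqq P$, projects $\pi: G/P \rightarrow G/P_0$, and shows that $\Omega_0 = \pi(\Omega)$ is again a proper quasi-homogeneous domain on which $\Aut(\Omega)$ acts properly (via the invariant metric of Section~\ref{sec:cara} and Corollary~\ref{cor:proper}). The fiber $gP_0/P$ through a point of $\Omega$ is a compact subvariety that is never contained in an affine chart (Proposition~\ref{prop:bigger_para}, part (2)), so it must meet $\partial \Omega$; this yields $z_n \in \Omega$ with $z_n \rightarrow z \in \partial\Omega$ while $\pi(z_n)$ is constant. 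Writing $z_n \in \varphi_n K$ forces $\varphi_n \rightarrow \infty$ in $G$ even though $\varphi_n^{-1}(gP_0) \in \pi(K)$, contradicting properness of the action on $\Omega_0$. This exploits the same underlying geometric fact you are reaching for (compact flag subvarieties do not fit inside affine charts), but extracts the contradiction from properness of the induced action on $\Omega_0$ rather than from producing a positive-dimensional subvariety inside $\overline{\Omega}$, thereby sidestepping both of the gaps above.
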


\begin{remark} \
\begin{enumerate}
\item The theorem fails for general semi-simple groups. Take for instance $G=\PGL_{p+q}(\Rb) \times \PGL_{p+q}(\Rb)$ and 
let $P \leq \PGL_{p+q}(\Rb)$ be the stabilizer of some $p$-plane. Then we can identify $(G \times G)/(P \times P)$ with $\Gr_p(\Rb^{p+q}) \times \Gr_p(\Rb^{p+q})$ and 
\begin{align*}
\Bc_{p,q} \times \Bc_{p,q} \subset \Gr_p(\Rb^{p+q}) \times \Gr_p(\Rb^{p+q})
\end{align*}
is a proper quasi-homogeneous domain. Notice that the parabolic subgroup $P \times P$ is non-maximal: it is contained in 
$\PGL_{p+q}(\Rb) \times P$ and $P \times \PGL_{p+q}(\Rb)$. The semisimple case will be explored in more detail in Theorem~\ref{thm:products} below.
\item There are examples of non-proper quasi-homogeneous domains: suppose $G = \PGL_{p+q}(\Rb)$ and $P \leq G$ is the stabilizer of a $p$-plane in $\Kb^{p+q}$. Then, by Example~\ref{ex:symmetric}, there exists a proper homogeneous domain $\Omega \subset G/P$. Now consider a parabolic subgroup $P^\prime \lneqq  P$ and the natural projection map $\pi: G/P^\prime \rightarrow G/P$.  Then $\pi^{-1}(\Omega)$ is a homogeneous domain in $G/P^\prime$ which is not proper (see Proposition~\ref{prop:bigger_para} below). 
\item We should also mention recent constructions~\cite{GW2008, GW2012, GGKW2015, KLP2013, KLP2014, KLP2014b} of open domains $\Omega$ in 
certain flag manifolds where there exists a discrete group $\Gamma \leq \Aut(\Omega)$ which acts properly, freely, and cocompactly on $\Omega$. 
These constructions use the theory of Anosov representations and (to the best of our knowledge) never produce proper domains.
\end{enumerate}
\end{remark}

\subsection{The general semisimple case} Suppose $G$ is a connected semisimple Lie group with trivial center and no compact factors. Then there exists $G_1, \dots, G_r$ non-compact simple Lie groups each with trivial centers such that
\begin{align*}
G \cong \prod_{i=1}^r G_i.
\end{align*}
Now if $P \leq G$ is a parabolic subgroup we can find subgroups $P_i \leq G_i$ such that $P \cong \prod_{i=1}^r P_i$. Moreover, either $P_i = G_i$ or $P_i \leq G_i$ is a parabolic subgroup. Since we are interested in the flag manifold $G/P$, we further assume that $P_i \neq G_i$ for all $1 \leq i \leq r$.

\begin{theorem}\label{thm:products}(see Section~\ref{sec:products} below)
With the notation above, suppose $\Omega \subset G/P$ is a proper quasi-homogeneous domain. Then for $ 1 \leq i \leq r$ there exists a proper quasi-homogeneous domain $\Omega_i \subset G_i/P_i$ 
such that 
\begin{align*}
\Omega = \prod_{i=1}^r \Omega_i.
\end{align*}
In particular, each $P_i$ is a maximal parabolic subgroup of $G_i$.
\end{theorem}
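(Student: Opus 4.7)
The plan is to push $\Omega$ forward under the coordinate projections $\pi_i \colon G/P \cong \prod_{j=1}^r G_j/P_j \to G_i/P_i$, verify that $\Omega_i := \pi_i(\Omega)$ is itself a proper quasi-homogeneous domain, recover $\Omega$ as the product of the $\Omega_i$, and finally invoke Theorem~\ref{thm:proper_non_maximal} on each factor.

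For the preliminaries, an affine chart of $G/P$ factors as a product of affine charts of the $G_i/P_i$, since the unipotent radical of an opposite parabolic of $\prod P_i$ is $\prod N_i^-$; hence the boundedness of $\Omega$ in an affine chart implies the boundedness of each $\Omega_i$ in an affine chart of $G_i/P_i$, and together with the continuity of $\pi_i$ this makes $\Omega_i$ a proper domain. Writing $g = (g_1, \dots, g_r) \in \Aut(\Omega) \leq \prod_i G_i$, the relation $g \Omega = \Omega$ projects to $g_i \Omega_i = \Omega_i$, so projection defines homomorphisms $\Aut(\Omega) \to \Aut(\Omega_i)$. If $K \subset \Omega$ is compact with $\Aut(\Omega) \cdot K = \Omega$, then $\pi_i(K)$ is compact in $\Omega_i$ and the image of $\Aut(\Omega)$ in $\Aut(\Omega_i)$ translates $\pi_i(K)$ onto $\Omega_i$, so $\Omega_i$ is quasi-homogeneous.

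The crux is to establish $\Omega = \prod_i \Omega_i$. The inclusion $\Omega \subseteq \prod_i \Omega_i$ is trivial, both sides are open, and the right-hand side is connected (each $\Omega_i$ is a continuous image of the connected set $\Omega$). Assume for contradiction that the inclusion is strict. Then there exist $z \in \prod_i \Omega_i \setminus \Omega$ and a sequence $z_n = g_n k_n \in \Omega$ with $z_n \to z$, $g_n \in \Aut(\Omega)$, $k_n \in K$; after passing to a subsequence, $k_n \to k_\infty \in K$. For each $i$, the identity
\begin{equation*}
g_n^{(i)} \pi_i(k_n) = \pi_i(z_n) \longrightarrow \pi_i(z) \in \Omega_i,
\end{equation*}
combined with $\pi_i(k_n) \to \pi_i(k_\infty) \in \Omega_i$, forces $(g_n^{(i)})$ to remain in a compact subset of $\Aut(\Omega_i)$ by the proper action provided by Proposition~\ref{prop:proper_ss} on the proper domain $\Omega_i$. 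Passing to a further subsequence, $g_n \to g_\infty \in G$, and closedness of $\Aut(\Omega)$ in $G$ (also a consequence of the proper action) places $g_\infty$ in $\Aut(\Omega)$. Hence $z = g_\infty k_\infty \in g_\infty \Omega = \Omega$, a contradiction.

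With the product decomposition in hand, each $\Omega_i \subset G_i/P_i$ is a proper quasi-homogeneous domain with $G_i$ a connected simple non-compact Lie group with trivial center. If $P_i \neq G_i$ were a non-maximal parabolic of $G_i$, Theorem~\ref{thm:proper_non_maximal} would forbid the cocompact action of $\Aut(\Omega_i)$ on $\Omega_i$, so $P_i = G_i$ or $P_i$ is a maximal parabolic. The main obstacle is the product-decomposition step: the reason a sequence $(g_n)$ in $\Aut(\Omega) \leq \prod G_i$ cannot escape in $G$ while $g_n k_n$ converges into $\prod_i \Omega_i$ is precisely the coordinate-wise proper action on each factor; without that input, there is no obvious way to simultaneously bound the components $g_n^{(i)}$ in $G$.
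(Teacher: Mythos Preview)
Your argument is correct and essentially the same as the paper's: project to $\Omega_i=\pi_i(\Omega)$, use the proper action on each factor to see that $\Aut(\Omega)$ acts properly on $\prod_i\Omega_i$, and derive a contradiction from a point of $\partial\Omega\cap\prod_i\Omega_i$. The only slip is the parenthetical that closedness of $\Aut(\Omega)$ in $G$ follows from the proper action---it does not in general, and the paper invokes Proposition~\ref{prop:closed} for this; also, the proper action on each $\Omega_i$ is Corollary~\ref{cor:proper} (the simple case), whereas Proposition~\ref{prop:proper_ss} is the derived statement for $\Omega$ itself.
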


This theorem reduces the study of quasi-homogeneous domains to the case when $G$ is simple. 

\subsection{The geometry of quasi-homogeneous domains}

Kobayashi proved the following theorem connecting symmetry with convexity for domains in real projective space.

\begin{theorem}\cite{K1977}\label{thm:kob_convex}
If $\Omega \subset \Pb(\Rb^{d+1})$ is a quasi-homogeneous proper domain, then $\Omega$ is convex. 
\end{theorem}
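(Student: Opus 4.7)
Working in an affine chart $\Ab \subset \Pb(\Rb^{d+1})$ in which $\Omega$ is bounded, the plan is to show that $\Omega$ coincides with the interior $C$ of its convex hull in $\Ab$. The containment $\Omega \subset C$ is automatic, so the goal is the reverse inclusion.

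First I would observe that $\Aut(\Omega)$ must be non-compact, since otherwise co-compactness would force $\Omega = \Aut(\Omega) \cdot K$ to be compact, which is impossible for a bounded open subset of $\Pb(\Rb^{d+1})$. So take a divergent sequence $g_n \in \Aut(\Omega)$ and apply a Cartan decomposition $g_n = u_n a_n v_n$ in $\PGL_{d+1}(\Rb)$. After passing to a subsequence, standard dynamical analysis produces a degenerate projective limit $T = \lim g_n$, defined off some ``contracting'' projective subspace $H_-$, whose image lies in some proper ``expanding'' projective subspace $L_+$. Since $\Omega$ is bounded and $g_n$-invariant, $H_-$ cannot meet $\Omega$, so $T$ is defined on all of $\Omega$, and a short check shows $T(\Omega) \subset \overline{\Omega} \cap L_+$ contains a relatively open subset of $L_+ \cap \Ab$. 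In particular, $\overline{\Omega}$ contains a genuine affine ``flat'' inside $L_+$.

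Combining this with quasi-homogeneity, I would propagate such flat pieces throughout $\partial \Omega$. Assuming for contradiction that $C\setminus\Omega$ is non-empty, pick $p$ in it and let $q \in \partial\Omega \cap \interior(C)$ be the first exit point from $\Omega$ of a segment in $C$ from some $x_0 \in \Omega$ to $p$. Using $\Aut(\Omega)\cdot K = \Omega$ to drag the KAK-produced flat near $q$, together with the fact that $q$ lies in the relative interior of $C$ (so that there is room on both sides of $q$ transverse to the approach direction), I expect to produce a line segment inside $\overline{\Omega}$ through $q$ transverse to the segment $[x_0, p]$. Since $\Omega$ is open, the existence of such a transverse segment in $\overline{\Omega}$ combined with interior points nearby would contradict the definition of $q$ as an exit point, giving $\Omega = C$.

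The main obstacle is the final step: matching the ``flat piece'' produced by the KAK limit analysis to the local geometry at the exit point $q$. Extracting some flat somewhere in $\overline{\Omega}$ from a single divergent sequence is reasonably standard, but turning this into genuine convexity requires control over both the direction and the length of the flat one produces at each boundary point. Quasi-homogeneity is indispensable here: one limit map $T$ only describes $\overline{\Omega}$ near one attracting region, and it is precisely the hypothesis $\Aut(\Omega) \cdot K = \Omega$ that lets us transport this local information to every boundary point in a uniformly controlled way.
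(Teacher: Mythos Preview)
The paper does not prove this cited theorem directly; rather, it recovers it as the real-projective special case of Theorem~\ref{thm:dual_convex} (via Corollary~\ref{cor:dual_convex}), and the route is entirely metric: one builds the Carath\'eodory-type metric $C_\Omega$ of Section~\ref{sec:cara}, uses quasi-homogeneity and local compactness to force $(\Omega,C_\Omega)$ to be complete (Lemma~\ref{lem:hopf_rinow}), and then shows completeness is equivalent to dual convexity (Theorem~\ref{thm:completeness}). In $\Pb(\Rb^{d+1})$ dual convexity is ordinary convexity (Example~\ref{ex:dual_convex_proj}), and one is done. No dynamics of divergent automorphism sequences, no KAK analysis, no exit-point argument.

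Your dynamical approach is genuinely different, but it has a real gap --- essentially the one you yourself flag. Two concrete issues. First, nothing prevents the limit $T$ from having rank one, in which case $L_+$ is a single point and you extract no flat piece at all; your assertion that $H_-\cap\Omega=\emptyset$ is likewise only forced by boundedness when $\rank T\geq 2$ (since then $H_-\cap\Omega\neq\emptyset$ would yield $L_+\subset\overline{\Omega}$, impossible for a positive-dimensional projective subspace), so in the rank-one case both claims are unjustified. Second, and more fundamentally, even granting a nontrivial flat piece in $\overline{\Omega}$, the step from ``some flat somewhere'' to ``a flat through $q$ transverse to $[x_0,p]$'' is exactly where the argument stalls: the divergent sequence you get from $x_n\to q$ produces a limit whose attracting subspace $L_+$ you have no directional control over, and even if you did manufacture a transverse segment $\sigma\subset\overline{\Omega}$ through $q$, it is unclear how to reach a contradiction without already using convexity --- $\sigma$ could lie entirely in $\partial\Omega$, and ``interior points nearby'' do not help unless you already know that segments between them stay in $\Omega$. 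The metric proof sidesteps all of this: completeness of $C_\Omega$ directly forces every boundary point onto some $Z_\xi$ with $Z_\xi\cap\Omega=\emptyset$, which is precisely the supporting-hyperplane formulation of convexity.
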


In real projective space, one usually defines convexity using projective lines: a domain is convex if its intersection with every projective line is connected. However there is a dual definition: a connected open set $\Omega$ is convex if and only if for every $x \in \partial \Omega$ there exists a hyperplane $H$ such that $x \in H$ and $H \cap \Omega = \emptyset$.

This dual definition generalizes in a natural way to flag manifolds. Suppose $G$ is a connected non-compact simple Lie group with trivial center
 and $P \leq G$ is a parabolic subgroup. Fix a parabolic subgroup $Q$ opposite to $P$. Then for $\xi = hQ \in G/Q$ define the subset $Z_\xi \subset G/P$ by
\begin{align*}
Z_\xi := \{ gP \in G/P:  gPg^{-1} \text{ is not transverse to } hQh^{-1}\}.
\end{align*}

\begin{remark} \ \begin{enumerate}
\item This definition does not depend on the choice of $Q$: if $Q_1, Q_2 \leq G$ are parabolic subgroups both opposite to a parabolic subgroup $P \leq G$, then $Q_1$ and $Q_2$ are conjugate. In particular, for any $\xi_1 \in G/Q_1$ there exists $\xi_2 \in G/Q_2$ such that $Z_{\xi_1} = Z_{\xi_2}$. 

\item Notice that $G/P - Z_\xi$ is an affine chart of $G/P$, see  Subsection~\ref{subsec:affine_chart}.
\end{enumerate} 
\end{remark}
\begin{example}\label{ex:dual_convex_proj} Again let $\Kb$ be either the real numbers $\Rb$, the complex numbers $\Cb$, or the quaternions $\Hb$. Let $e_1, \dots, e_d$ be the standard basis of $\Kb^d$ and let $P \leq \PGL_d(\Kb)$ be the stabilizer of the line $\Kb e_1$. Then $P \leq \PGL_d(\Rb)$ is parabolic and $G/P$ can be identified with $\Pb(\Kb^d)$. If $Q \leq \PGL_d(\Rb)$ is the stabilizer of $\Spanset_{\Kb}(e_2, \dots, e_d)$, then $Q$ is a parabolic subgroup opposite to $P$. Moreover we can identify $G/Q$ with $\Gr_{d-1}(\Kb^d)$. With these identifications, if $\xi \in \Gr_{d-1}(\Kb^d)$ then $Z_\xi$ is the image of the hypersurface $\xi$ in $\Pb(\Kb^d)$. 
\end{example}

Motivated by the hyperplane definition of convexity in real projective space we make the following definition.

\begin{definition}\label{defn:dual_convex}
Suppose $G$ is a connected non-compact simple Lie group with trivial center and $P \leq G$ is a parabolic subgroup. 
An open connected set $\Omega \subset G/P$ is called \emph{dual convex} if for each $x \in \partial \Omega$ there exists a parabolic subgroup $Q$ opposite to $P$ and some $\xi \in G/Q$ such that $x \in Z_\xi$ and $Z_\xi \cap \Omega = \emptyset$. 
\end{definition}

We will then prove the following generalization of Kobayashi's theorem.

\begin{theorem}\label{thm:dual_convex}(see Corollary~\ref{cor:dual_convex} below)
Suppose $G$ is a connected non-compact simple Lie group with trivial center  and $P \leq G$ is a parabolic subgroup. If $\Omega \subset G/P$ is a quasi-homogeneous proper domain, then $\Omega$ is dual convex. 
\end{theorem}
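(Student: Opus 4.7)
The approach imitates the classical dynamical proof of convexity for quasi-homogeneous proper convex domains in real projective space, due to Kobayashi and Benoist, with the role of projective hyperplanes played by the sets $Z_\xi$ for $\xi \in G/Q$. The structural input we need is the dynamics on $G/P$ of diverging sequences in $G$: using a KAK decomposition, any sequence $g_n \in G$ with $g_n \to \infty$ admits, after passing to a subsequence, ``attracting data'' $\xi^\pm \in G/Q$ and $p^\pm \in G/P$ with $p^\pm \in Z_{\xi^\pm}$ such that $g_n \to p^+$ locally uniformly on $G/P \setminus Z_{\xi^-}$ and $g_n^{-1} \to p^-$ locally uniformly on $G/P \setminus Z_{\xi^+}$.

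Fix $x \in \partial\Omega$. Using quasi-homogeneity, pick a compact $K \subset \Omega$ with $\Aut(\Omega)\cdot K = \Omega$ and write a sequence $x_n \in \Omega$ converging to $x$ as $x_n = g_n k_n$ with $g_n \in \Aut(\Omega)$ and $k_n \in K$. After a subsequence, $k_n \to k_\infty \in K \subset \Omega$. Since $\Aut(\Omega)$ acts properly on $\Omega$ (Proposition~\ref{prop:proper_ss}) while $x \notin \Omega$, the sequence $g_n$ must diverge in $G$. Extracting a further subsequence I would apply the dynamical setup above to obtain opposite parabolic subgroups $P, Q$ and limit data $(\xi^\pm, p^\pm)$ with the stated convergence. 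By choosing $K$ to have non-empty interior and, if necessary, perturbing the sequence $x_n$ within $\Omega$ so that $k_\infty$ lies in the dense open set $K \setminus Z_{\xi^-}$, uniform convergence of $g_n$ on a neighborhood of $k_\infty$ then yields
\begin{align*}
x = \lim_{n \to \infty} g_n k_n = p^+ \in Z_{\xi^+},
\end{align*}
giving the required incidence $x \in Z_{\xi^+}$.

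It then remains to show $Z_{\xi^+} \cap \Omega = \emptyset$, for which I would argue by contradiction. Suppose $y_0 \in Z_{\xi^+} \cap \Omega$. Take nested open neighborhoods $U_m \searrow \{p^-\}$ and $W_m \searrow Z_{\xi^+}$ in $G/P$, and fix any point $z_0 \in \Omega \setminus U_1$ (possible since $\Omega$ has non-empty interior and $p^- \in \partial\Omega$ by the contraction argument applied to any single point in $\Omega \setminus Z_{\xi^+}$). For each $m$ the compact set $\overline{\Omega} \setminus W_m \subset G/P \setminus Z_{\xi^+}$ is sent into $U_m$ by $g_{n(m)}^{-1}$ for $n(m)$ large enough. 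Since $g_n^{-1}(\overline\Omega) = \overline\Omega$, this forces
\begin{align*}
\overline\Omega \setminus U_m \subset g_{n(m)}^{-1}(\overline\Omega \cap W_m) \subset g_{n(m)}^{-1}(W_m),
\end{align*}
so that any fixed compact $C \subset \Omega \setminus \{p^-\}$ with non-empty interior is eventually contained in $g_{n(m)}^{-1}(W_m)$. A careful analysis of how $g_n^{-1}$ deforms neighborhoods of the analytic subvariety $Z_{\xi^+}$ (using the Bruhat-type structure and the KAK decomposition above) shows that the sets $g_{n(m)}^{-1}(W_m)$ collapse, as $m \to \infty$, onto a proper analytic subvariety of $G/P$. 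This would force the open set $C$ into a set with empty interior, contradicting the fact that $\Omega$ is open.

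The main obstacle is the last step: promoting the pointwise/compact-convergence control of $g_n^{-1}$ on $G/P \setminus Z_{\xi^+}$ to a uniform statement that pins $\overline\Omega$ into a shrinking tubular neighborhood of a proper subvariety. This requires enough structural input about the action of the torus part $a_n$ on $G/P$ (in particular its effect on small transverse slices to $Z_{\xi^+}$) to rule out pathological behavior at the boundary of the domain of uniform convergence. The secondary technical point is the identification $x = p^+$ in the degenerate case $k_\infty \in Z_{\xi^-}$, which requires the flexibility to replace the pair $(g_n, k_n)$ with a nearby one afforded by the quasi-homogeneity hypothesis.
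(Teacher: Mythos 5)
Your route is genuinely different from the paper's, but it has two real gaps. First, the dynamical input you posit — that every divergent sequence $g_n \in G$ has, after a subsequence, an attracting point $p^+ \in G/P$ with $g_n \to p^+$ locally uniformly on $G/P \setminus Z_{\xi^-}$ — is false for a general divergent sequence. This proximal (``north--south'') behaviour on $G/P$ requires the Cartan projections $\mu(g_n)$ to drift away from the walls associated to $P$ (``$P$-divergence''); for instance $\mathrm{diag}(e^n,e^n,e^{-2n})$ in $\PGL_3(\Rb)$ acting on $\Pb(\Rb^3)$ converges to a projection onto a projective \emph{line}, not to a constant map off a hyperplane. Nothing in the quasi-homogeneity hypothesis forces the sequences $g_n$ you extract to be $P$-divergent, so at best you get convergence of normalized representatives $\tau(g_n) \to T$ in $\Pb(\End V)$ with $T$ singular, and then $x$ lands in $\iota^{-1}(\Pb(\operatorname{Im} T))$, which is not a priori of the form $Z_\xi$ with $\xi \in \Omega^*$. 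Second, and more seriously, the step you yourself flag as the main obstacle — showing $Z_{\xi^+} \cap \Omega = \emptyset$ — is not actually an argument: the containment $\overline\Omega \setminus U_m \subset g_{n(m)}^{-1}(W_m)$ is correct, but the claimed ``collapse'' of $g_{n(m)}^{-1}(W_m)$ onto a proper subvariety is exactly where uniform control fails, since $Z_{\xi^+}$ is the singular locus of the limiting dynamics of $g_n^{-1}$ and the transverse expansion of $g_{n(m)}^{-1}$ near $Z_{\xi^+}$ can beat any prescribed shrinking of $W_m$. So the contradiction is not established.

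For comparison, the paper avoids dynamics entirely. It builds the invariant Carath\'eodory-type metric $C_\Omega$ from the dual set $\Omega^*$ (Section~\ref{sec:cara}), proves that $(\Omega, C_\Omega)$ is complete if and only if $\Omega$ is dual convex (Theorem~\ref{thm:completeness}: the enlargement $\wh\Omega = G/P \setminus \bigcup_{\xi \in \Omega^*} Z_\xi$ satisfies $C_{\wh\Omega}|_\Omega = C_\Omega$, so a boundary point of $\Omega$ interior to $\wh\Omega$ would be a $C_\Omega$-Cauchy limit; conversely a boundary point on some $Z_\xi$, $\xi\in\Omega^*$, makes the cross-ratio blow up), and then gets completeness for free from cocompactness via a Hopf--Rinow-type lemma (Lemma~\ref{lem:hopf_rinow}). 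If you want to keep a dynamical flavour, you would need to first establish $P$-divergence of the orbit sequences and then still find a substitute for your collapse argument; the metric route is both shorter and circumvents both difficulties.
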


\begin{remark} Example~\ref{ex:dual_convex_proj} shows that for open connected sets in real projective space, dual convexity is equivalent to the standard definition of convexity. Dual convexity also generalizes a notion of convexity from several complex variables. In particular, an open set $\Omega \subset \Cb^d$ is often called \emph{weakly linearly convex} if for each point $x \in \partial \Omega$ there exists a complex hyperplane $H$ such that $x \in H$ and $H \cap \Omega = \emptyset$. Surprisingly, this weak form of convexity has strong analytic implications. See~\cite{APS2004, H2007} for more details. 
\end{remark}

\subsection{An invariant metric} 

Every proper convex set $\Omega \subset \Pb(\Rb^{d+1})$  has a metric $H_{\Omega}$ called the \emph{Hilbert metric} which is complete, geodesic, and $\Aut(\Omega)$-invariant. This metric is a useful tool understanding the geometry of domains with large projective symmetry groups. 

We will show that a proper dual convex domain in a flag manifold always has an complete $\Aut(\Omega)$-invariant metric which is a natural analogue of the Hilbert metric. 

\begin{theorem}\label{thm:intro_metric}Suppose $G$ is a connected non-compact simple Lie group with trivial center and $P \leq G$ is a parabolic subgroup. If $\Omega \subset G/P$ is a proper dual convex domain, then there exists an explicit  $\Aut(\Omega)$-invariant complete metric $C_\Omega$ which generates the standard topology on $\Omega$.
\end{theorem}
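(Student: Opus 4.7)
The plan is to define $C_\Omega$ as a Carath\'eodory-type pseudometric, by taking a supremum over a natural $\Aut(\Omega)$-invariant family of ``test'' maps from $\Omega$ into one-dimensional proper convex domains (equipped with their classical Hilbert metrics). Concretely, for each parabolic subgroup $Q \leq G$ opposite to $P$ and each $\xi \in G/Q$ with $Z_\xi \cap \Omega = \emptyset$, the complement $G/P \setminus Z_\xi$ is an affine chart containing $\Omega$. Using one-parameter subgroups of $G$ (or $\SL_2$-triples in the Lie algebra) whose orbits on $G/P$ produce projective lines transverse to $Z_\xi$, one obtains natural projective maps $f : \Omega \to I$ into a proper interval $I \subset \Pb(\Rb^2)$. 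Let $\Fc$ be the resulting $\Aut(\Omega)$-invariant family, and set
\begin{align*}
C_\Omega(x, y) = \sup_{f \in \Fc} H_I(f(x), f(y)),
\end{align*}
where $H_I$ denotes the Hilbert metric of the target interval.

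Symmetry and the triangle inequality for $C_\Omega$ are inherited from the Hilbert metrics on the targets together with the supremum definition; finiteness of the supremum on compact subsets of $\Omega \times \Omega$ follows from a compactness argument on the space of normalized test maps, using that $\Omega$ is bounded in some affine chart. For non-degeneracy at a pair $x \neq y$, one exploits dual convexity: a suitable $\xi \in G/Q$ with $Z_\xi$ disjoint from $\Omega$ and adapted to the line through $x$ and $y$ gives a test map with $f(x) \neq f(y)$. To compare topologies, continuity of the family $\Fc$ (uniform on compact sets) shows that $C_\Omega$ is upper semicontinuous and hence that the $C_\Omega$-topology is coarser than or equal to the standard one; conversely, taking finitely many well-chosen test maps yields local coordinates, giving a lower bound which pins the $C_\Omega$-topology to the standard topology. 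The $\Aut(\Omega)$-invariance is immediate from the construction, since precomposition by $g \in \Aut(\Omega)$ permutes the elements of $\Fc$.

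The heart of the argument is completeness. Given a $C_\Omega$-Cauchy sequence $(x_n)$, it has bounded $C_\Omega$-diameter, so by the boundedness of $\Omega$ in an affine chart a subsequence converges in $G/P$ to some $z \in \overline{\Omega}$. One must rule out $z \in \partial \Omega$. Here I invoke Theorem~\ref{thm:dual_convex} (applied to produce dual convexity) together with the blow-up behaviour of the Hilbert metric: at any $z \in \partial \Omega$ there is $\xi$ with $z \in Z_\xi$ and $Z_\xi \cap \Omega = \emptyset$, and a one-parameter family of test maps $f_t \in \Fc$ whose image intervals shrink so that $H_{I_t}(f_t(x_n), f_t(x_m))$ is forced to diverge as $x_n, x_m \to z$; this contradicts the Cauchy property. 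Hence $z \in \Omega$, and $C_\Omega$ is complete.

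The main obstacle is precisely this last step: in the classical convex projective case, divergence of the Hilbert distance at the boundary is immediate from the cross-ratio formula on a single line, but in a general flag manifold one must construct enough ``slice'' test maps through each boundary point and control their Hilbert geometries uniformly enough to force divergence. This is where the dual convex hypothesis gets used in an essential way, converting a geometric separation condition at $\partial \Omega$ into a quantitative estimate on pullbacks of Hilbert metrics.
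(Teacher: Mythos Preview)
Your overall strategy---define a Carath\'eodory-type pseudometric via a family of projective maps to intervals, verify the metric axioms, then use dual convexity to establish completeness---matches the paper's. But several steps are either vague or incorrect as written.

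\textbf{Construction of test maps.} You gesture at ``one-parameter subgroups'' or ``$\SL_2$-triples'' to produce projective lines in $G/P$, but never pin down the family $\Fc$, and it is not clear that orbits of one-parameter subgroups yield enough maps (or even projective lines) in a general flag manifold. The paper avoids this entirely: it fixes an irreducible representation $\tau: G \to \PGL(V)$ in which $P$ stabilizes a line and the opposite $Q$ stabilizes a hyperplane (Theorem~\ref{thm:repn}), obtains embeddings $\iota: G/P \to \Pb(V)$ and $\iota^*: G/Q \to \Pb(V^*)$, and sets
\[
C_\Omega(x,y) = \sup_{\xi,\eta \in \Omega^*} \frac{1}{2}\log\left|\frac{\iota^*(\xi)(\iota(x))\,\iota^*(\eta)(\iota(y))}{\iota^*(\xi)(\iota(y))\,\iota^*(\eta)(\iota(x))}\right|.
\]
This is explicit and makes all the verifications routine.

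\textbf{Non-degeneracy.} You invoke dual convexity to separate $x \neq y$, but this is unnecessary: the paper shows $C_\Omega$ is a genuine metric for \emph{any} proper domain. Properness alone forces $\Omega^*$ to have nonempty interior, so $\iota^*(\Omega^*)$ spans $V^*$ (Observation~\ref{obs:spanning}), which is what separates points.

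\textbf{Completeness.} Your argument has a real gap. You claim that for $x_n, x_m \to z \in \partial\Omega$ one can find test maps with $H_{I_t}(f_t(x_n), f_t(x_m))$ diverging, contradicting the Cauchy property. But if both points converge to the same boundary point, their images under any fixed test map also converge to a common point, and there is no reason for their Hilbert distance to blow up. The correct argument compares $x_n$ to a \emph{fixed} interior point $x_1$: dual convexity gives $\xi \in \Omega^*$ with $z \in Z_\xi$, and properness gives $\eta \in \Omega^*$ with $z \notin Z_\eta$; then the single cross-ratio term for $(\xi,\eta)$ shows $C_\Omega(x_1, x_n) \to \infty$, contradicting the bound $C_\Omega(x_1, x_n) \leq M$ that comes from the Cauchy condition. (Also, you cite Theorem~\ref{thm:dual_convex} to ``produce'' dual convexity, but dual convexity is already a hypothesis of the statement you are proving.)
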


\begin{remark} \ \begin{enumerate}
\item The metric $C_\Omega$ can also be seen as a natural analogue of the Carath{\'e}odory metric from several complex variables.
\item For ``linearly convex'' domains in complex projective space, $C_\Omega$ was introduced by Dubois~\cite{D2008} and used in~\cite{Z2015b} to provide several characterizations of the unit ball. 
\end{enumerate} 
\end{remark}

 \subsection*{Acknowledgments} 

I would like to thank Pierre Clare, Lizhen Ji, Wouter Van Limbeek, and Ralf Spatzier for many interesting discussions. I would also like to thank the referee for a number of helpful comments and corrections. This material is based upon work supported by the National Science Foundation under grants DMS-1400919 and DMS-1760233.

\section{Examples and rigidity of proper quasi-homogeneous domains}

In this section we describe some examples of proper quasi-homogeneous domains.

\subsection{The symmetric case}\label{subsec:symmetric} The Borel embedding shows that every non-compact Hermitian symmetric space $X$ embeds as a domain $\Omega_X$ into a flag manifold $G/P$ (and this flag manifold can be identified with the compact dual of $X$) such that $\Aut(\Omega_X) = \Isom_0(X)$. The image of this embedding is a proper domain.  

More generally, Nagano~\cite[Theorem 6.1]{N1965} has characterized all the non-compact symmetric spaces $X$ whose compact dual $X^*$ can be identified with a flag manifold $G/P$ and $X$ embeds as a domain $\Omega_X$ into $G/P$ such that $\Aut(\Omega_X) = \Isom_0(X)$. In all these examples the images are proper domains~\cite[Theorem 6.2]{N1965}

There also exists examples of symmetric spaces which embed into real projective space as a proper domain. In particular, the symmetric spaces associated to $\SL_d(\Rb)$, $\SL_d(\Cb)$, $\SL_d(\Hb)$, and $E_{6(-26)}$ can all be realized as a proper homogeneous domains in a real projective space.  For instance, consider the convex set
\begin{align*}
\mathcal{P} = \{ [X] \in \Pb(S_{d,d}) : X \text{ is positive definite}\}
\end{align*}
where $S_{d,d}$ is the vector space of real symmetric $d$-by-$d$ matrices. Then the group $\SL_{d}(\Rb)$ acts transitively on $\mathcal{P}$ by $g \cdot [X] = [g X g^t]$ and the stabilizer of a point is $\SO(d)$. So we have an identification 
\begin{align*}
\mathcal{P} = \SL_d(\Rb) / \SO(d).
\end{align*}

There does not appear to be a general classification of embeddings of symmetric spaces into flag manifolds. 

\begin{problem} Characterize the symmetric spaces $X$ which embed as a proper domain $\Omega_X$ into a flag manifold where $\Isom_0(X) = \Aut(\Omega_X)$. \end{problem}

\subsection{Real projective space} Beyond the examples mentioned in the subsection above, there are a rich class of proper domains $\Omega \subset \Pb(\Rb^d)$ where $\Aut(\Omega)$ contains a discrete group $\Gamma$ which acts cocompactly on $\Omega$. By a result of Kobayashi (Theorem~\ref{thm:kob_convex} above) these examples will alway be convex and are often called \emph{convex divisible domains}. Here are some examples:
\begin{enumerate}
\item Let $\Bc\subseteq \Pb(\Rb^{d+1})$ be the Klein-Beltrami model of $\Hb^d_{\Rb}$. Results of Johnson-Millson~\cite{JM1987} and Koszul~\cite{K1968} imply that the domain $\Bc$ can be deformed to a divisible convex domain $\Omega$ where $\Aut(\Omega)$ is discrete (see~\cite[Section 1.3]{B2000} for $d>2$ and~\cite{G1990} for $d=2$).
\item For every $d \geq 4$, Kapovich~\cite{K2007} has constructed divisible convex domains $\Omega \subset \Pb(\Rb^{d+1})$ such that $\Aut(\Omega)$ is discrete, Gromov hyperbolic, and not quasi-isometric to any symmetric space,.
\item Benoist~\cite{B2006} and Ballas, Danciger, and Lee~\cite{BDL2015} have constructed divisible convex domains $\Omega \subset \Pb(\Rb^{4})$ such that $\Aut(\Omega)$ is discrete, not Gromov hyperbolic, and not quasi-isometric to any symmetric space. 
\end{enumerate}
More background can be found in the survey papers by Benoist~\cite{B2008}, Marquis~\cite{M2013}, and Quint~\cite{Q2010}.

\subsection{Complex projective space}

As Example~\ref{ex:symmetric} shows, there exists a proper homogeneous domain $\Bc \subset \Pb(\Cb^d)$ (which is a model of complex hyperbolic space, see for instance~\cite[Chapter 19]{M1973}). 

In $\Pb(\Cb^2)$ there do exist non-homogeneous proper domains which admit a co-compact action by a discrete group in $\Aut(\Omega)$. However if $\partial \Omega$ has weak regularity then a result of Bowen implies that $\Omega$ must be a symmetric domain:

\begin{theorem}\cite{B1979}
Suppose $\Omega \subset \Pb(\Cb^2)$ is a proper domain and $\partial \Omega$ is a Jordan curve with Hausdorff dimension one. If there exists a discrete group $\Gamma \leq \Aut(\Omega)$ which acts co-compactly on $\Omega$, then $\Omega$ is projectively isomorphic to $\Bc$. 
\end{theorem}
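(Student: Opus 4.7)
The plan is to reduce the statement to Bowen's classical theorem on the Hausdorff dimension of limit sets of quasi-Fuchsian groups.

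First I would identify $\Pb(\Cb^2)$ with the Riemann sphere $\wh{\Cb}$, so that $\PGL_2(\Cb)$ acts as the full group of M\"obius transformations. Since $\partial\Omega$ is a Jordan curve in $\wh{\Cb}$, by the Jordan curve theorem it separates $\wh{\Cb}$ into two simply connected components; let $\Omega^\prime$ denote the component distinct from $\Omega$. Any M\"obius transformation preserving $\Omega$ must also preserve $\partial\Omega$ and $\Omega^\prime$.

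Next I would show that $\Gamma$ is a quasi-Fuchsian group with limit set equal to $\partial\Omega$. The group $\Gamma$ is discrete and preserves the pair $(\Omega, \Omega^\prime)$. Since each of $\Omega$ and $\Omega^\prime$ is a simply connected proper subdomain of $\wh{\Cb}$, it carries a $\Gamma$-invariant Poincar\'e metric, so $\Gamma$ acts properly discontinuously on $\Omega \cup \Omega^\prime$; hence the limit set $\Lambda(\Gamma) \subseteq \partial\Omega$. Conversely, by uniformization a conformal identification $\Omega \cong \Db$ conjugates $\Gamma$ to a cocompact Fuchsian group $\Gamma^\prime$ acting on $\Db$; the limit set of $\Gamma^\prime$ is all of $\partial\Db$, and Carath\'eodory's theorem applied to the Jordan boundary $\partial\Omega$ shows that this identification extends to a homeomorphism of closures. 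Therefore $\Lambda(\Gamma) = \partial\Omega$, a Jordan curve of Hausdorff dimension one by hypothesis.

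The key input is then the main theorem of~\cite{B1979}: if $\Gamma$ is a quasi-Fuchsian group whose limit set has Hausdorff dimension one, then the limit set is a round circle in $\wh{\Cb}$. Applying this, $\partial\Omega$ is a round circle, so $\Omega$ is one of the two complementary round discs and hence projectively isomorphic to $\Bc$. The substantive obstacle is Bowen's theorem itself, whose proof uses the thermodynamic formalism for the geodesic flow on the convex core of the quasi-Fuchsian manifold: the dimension-one hypothesis forces a Patterson--Sullivan type measure on $\Lambda(\Gamma)$ to be absolutely continuous with respect to one-dimensional Hausdorff measure, and a bounded-distortion/rigidity argument then forces the boundary to be round. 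The reduction steps above are elementary once one uses the cocompactness hypothesis and the fact, noted in the introduction, that $\Aut(\Omega)$ acts properly on any proper domain.
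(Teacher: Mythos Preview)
The paper does not give its own proof of this statement: it is quoted as a result of Bowen with the citation~\cite{B1979}, in the survey section on examples in complex projective space. So there is nothing in the paper to compare your argument against; your proposal is really a sketch of how the statement, as phrased here, reduces to Bowen's actual theorem on quasi-Fuchsian groups.

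That reduction is the right one and is essentially what the author has in mind when writing ``a result of Bowen implies.'' One point in your sketch deserves a bit more care. You announce that $\Gamma$ is quasi-Fuchsian and then argue carefully that $\Lambda(\Gamma)=\partial\Omega$, but you never really justify the \emph{quasi-Fuchsian} part, and Bowen's theorem in~\cite{B1979} is stated for quasi-Fuchsian groups, not merely for Kleinian groups whose limit set happens to be a Jordan curve of Hausdorff dimension one. The missing step is short but not vacuous: since the Riemann map conjugates $\Gamma$ to a cocompact Fuchsian group, $\Gamma$ is a torsion-free closed surface group with no parabolics, hence convex cocompact as a Kleinian group; a convex cocompact Kleinian surface group whose limit set is a Jordan curve is quasi-Fuchsian (equivalently, the action on the ordinary set $\Omega\cup\Omega'$ is cocompact, so Bers' simultaneous uniformization applies). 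With that sentence inserted, your reduction is complete, and the appeal to Bowen's rigidity theorem finishes the argument exactly as you describe.
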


In $\Pb(\Cb^3)$ the co-compact case is even more rigid and recent work of Cano and Seade implies the following:

\begin{theorem}\cite{CS2014}
Suppose $\Omega \subset \Pb(\Cb^3)$ is a proper domain and $\Gamma \leq \Aut(\Omega)$ is a discrete group which acts co-compactly on $\Omega$. Then $\Omega$ is projectively isomorphic to $\Bc$. 
\end{theorem}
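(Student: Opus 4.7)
The plan is to combine the structural results of this paper with a Pinchuk-type scaling argument from several complex variables. In $\Pb(\Cb^3)$ the stabilizer of a line is a maximal parabolic subgroup, so Theorem~\ref{thm:dual_convex} applies: $\Omega$ is dual convex, meaning every $x \in \partial \Omega$ lies on a projective hyperplane $H_x$ disjoint from $\Omega$. Choosing an affine chart whose hyperplane at infinity avoids $\overline{\Omega}$, we view $\Omega$ as a bounded weakly linearly convex domain in $\Cb^2$. Theorem~\ref{thm:intro_metric} further provides a complete $\Gamma$-invariant metric $C_\Omega$ generating the standard topology, so $(\Omega, C_\Omega)/\Gamma$ is a compact length space.

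Next, fix $p \in \partial \Omega$ and a sequence $p_n \in \Omega$ with $p_n \to p$. By cocompactness, there exist $\gamma_n \in \Gamma$ and a compact $K \subset \Omega$ with $\gamma_n p_n \in K$; pass to a subsequence so that $\gamma_n p_n \to q \in K$. Using the supporting hyperplane $H_p$ and the local geometry of $\partial \Omega$ near $p$, one composes each $\gamma_n^{-1}$ with an anisotropic projective dilation $\sigma_n$, stretching along the complex normal to $H_p$ and more slowly along the complex tangent directions. The rescaled domains $\sigma_n \gamma_n(\Omega)$ converge in the local Hausdorff sense to a projective limit $\Omega_\infty$ containing $q$, and $\Omega_\infty$ inherits a complete invariant metric as a limit of the $C_{\sigma_n \gamma_n(\Omega)}$. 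Because each $\sigma_n \gamma_n$ lies in $\PGL_3(\Cb)$, any projective identification of $\Omega_\infty$ will propagate back to $\Omega$ itself.

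It remains to show $\Omega_\infty$ is projectively isomorphic to $\Bc$. The scaling is chosen so that, if $\partial \Omega$ is strongly pseudoconvex at $p$, then $\Omega_\infty$ is the Siegel half-space model of complex hyperbolic $2$-space, which is projectively isomorphic to $\Bc$; propagating this back to $\Omega$ then gives the desired conclusion. The main obstacle is to upgrade the weak, pointwise dual convexity to strong pseudoconvexity at a sufficiently dense set of boundary points. The natural strategy, analogous to Benoist's proof in the real divisible convex case, is dynamical: extract from cocompactness and dual convexity an element $\gamma \in \Gamma$ with an attracting fixed point $p^+ \in \partial \Omega$ whose Jordan structure on $\Cb^3$ has a spectral gap forcing the Levi form of $\partial \Omega$ at $p^+$ to be positive definite. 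The crucial simplification in $\Pb(\Cb^3)$ is that the possible Jordan-block patterns in $\SL_3(\Cb)$ are very limited, and this is the heart of the argument. Once a single strongly pseudoconvex boundary point is produced, the Pinchuk scaling together with a projective Wong--Rosay theorem completes the proof.
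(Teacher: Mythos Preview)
This theorem is not proved in the paper at all: it is quoted from Cano and Seade~\cite{CS2014}, and the paper explicitly remarks that their argument goes through Kobayashi and Ochiai's classification~\cite{KO1980} of compact complex surfaces admitting a flat projective structure. In other words, the actual proof works on the quotient $\Gamma\backslash\Omega$, recognizes it among the finitely many types of compact complex surfaces that can carry a $(\PGL_3(\Cb),\Pb(\Cb^3))$-structure, and reads off that $\Omega$ is the ball. There is no scaling, no boundary analysis, and no dynamics of individual elements of $\Gamma$ in that route.

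Your proposal is therefore an entirely different strategy, and as written it has real gaps. First, the sentence ``any projective identification of $\Omega_\infty$ will propagate back to $\Omega$ itself'' is not justified: the maps $\sigma_n\gamma_n$ send $\Omega$ onto domains converging to $\Omega_\infty$, but Hausdorff convergence of images does not by itself produce a projective equivalence between $\Omega$ and $\Omega_\infty$. In the genuine Wong--Rosay argument this step is handled by a normal-families/Montel argument for holomorphic maps, which has no direct analogue for elements of $\PGL_3(\Cb)$ acting on a domain with no regularity hypotheses. Second, and more seriously, the ``main obstacle'' you name is the whole difficulty: there is no hypothesis that $\partial\Omega$ is even $C^1$, so speaking of the Levi form at a boundary point, or of strong pseudoconvexity, is not a priori meaningful. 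Dual convexity only gives a supporting complex line through each boundary point; it gives no second-order information. Your suggestion to manufacture a strongly pseudoconvex point from the Jordan data of some $\gamma\in\Gamma$ is a plausible heuristic, but you have not carried it out, and the analogous real-projective story (Benoist) yields only $C^{1+\alpha}$ regularity, never $C^2$, so one should not expect a Levi form to materialize from dynamics alone. Until those two points are addressed, the proposal is a sketch of a program rather than a proof, and it is precisely because no such direct argument is known that the paper flags the reliance on the surface classification and cautions that the result may not extend to higher dimensions.
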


It is worth noting that Cano and Seade's proof relies on Kobayashi and Ochiai's~\cite{KO1980} classification of compact complex surfaces with a projective structure. In particular, it is unclear if Cano and Seade's result should extend to higher dimensions. However, it is known that every proper quasi-homogeneous domain with $C^1$ boundary is symmetric.

\begin{theorem}\cite{Z2015b}
Suppose $\Omega \subset \Pb(\Cb^{d+1})$ is a proper quasi-homogeneous domain. If $\partial \Omega$ is $C^{1}$, then $\Omega$ is projectively isomorphic to $\Bc$.
\end{theorem}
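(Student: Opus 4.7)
The plan is to adapt the Wong--Rosay scaling argument from several complex variables to this projective setting, using Theorems~\ref{thm:intro_metric} and~\ref{thm:dual_convex} as the main new tools. First I would fix $x_0 \in \Omega$, a compact subset $K \subset \Omega$ with $\Aut(\Omega) \cdot K = \Omega$, a boundary point $p \in \partial \Omega$, and a sequence $x_n \in \Omega$ tending to $p$. Quasi-homogeneity supplies $g_n \in \Aut(\Omega)$ with $g_n(x_n) \in K$. By Theorem~\ref{thm:dual_convex} there is a complex hyperplane $H \subset \Pb(\Cb^{d+1})$ with $p \in H$ and $H \cap \Omega = \emptyset$; the $C^1$ hypothesis forces $H$ to be the unique such supporting hyperplane at $p$, and $H$ is precisely the projective tangent hyperplane to $\partial \Omega$ at $p$.

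Next I would carry out a Pinchuk-type rescaling. Working in the affine chart $\Pb(\Cb^{d+1}) \setminus H \cong \Cb^d$, choose coordinates so that $p = 0$ and $\Omega$ is locally described as
\begin{align*}
\{ z : \Real z_1 > \rho(\Imaginary z_1, z_2, \dots, z_d) \},
\end{align*}
where $\rho$ vanishes to first order at $0$. Let $A_n \in \PGL_{d+1}(\Cb)$ be the projective transformation that normalizes the picture around $x_n$: translate in the normal direction so $A_n(x_n)$ lies at a fixed basepoint, and anisotropically rescale the tangential directions in a manner consistent with the normal scale. The $C^1$ regularity of $\partial\Omega$ at $p$ is exactly what makes $A_n(\Omega)$ converge in the local Hausdorff sense to a limit domain $\wh{\Omega} \subset \Cb^d$.

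The core step is to show that $\wh{\Omega}$ is projectively isomorphic to $\Bc$ and that $\Omega$ is too. Consider the compositions $\varphi_n := A_n \circ g_n^{-1}$ viewed as projective maps from $\Omega$ to $A_n(\Omega)$. Since $C_\Omega$ is $\Aut(\Omega)$-invariant (Theorem~\ref{thm:intro_metric}) and generates the topology, the $\varphi_n^{-1}$ form an equicontinuous family on compact subsets of $\wh{\Omega}$; together with their nature as restrictions of projective transformations this produces a subsequential limit $\varphi : \wh{\Omega} \to \Omega$ which is a projective isomorphism onto its image. A connectedness plus invariance argument then shows the image is all of $\Omega$, so $\Omega \cong \wh{\Omega}$ projectively. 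Finally, the one-parameter group of projective dilations of $\Cb^d$ that preserves the limit picture transports into $\Aut(\wh{\Omega})$ fixing the distinguished boundary point, and combined with the uniqueness of the supporting hyperplane and the $C^1$ flatness, this rotational/dilational symmetry pins $\wh{\Omega}$ down to the Siegel realization of $\Bc$.

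The main obstacle is the last step: showing the scaling limit is genuinely the complex ball rather than some more exotic model. In the strictly pseudoconvex $C^2$ case this drops out of the Levi form, but here only $C^1$ regularity of $\partial \Omega$ is available, so the requisite symmetry must be extracted dynamically from the non-compact one-parameter group of projective automorphisms of $\wh{\Omega}$ fixing $p$, combined with the uniqueness of the supporting hyperplane and the dual-convexity of $\wh{\Omega}$. A secondary technical difficulty is verifying the normal-families behaviour of the projective maps $\varphi_n$; for this one must convert the $\Aut(\Omega)$-invariance and topological compatibility of $C_\Omega$ into genuine equicontinuity on compact sets, which is where the results on the Carath\'eodory-type metric carry most of the analytic weight.
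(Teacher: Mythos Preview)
The paper does not contain a proof of this statement. It is cited as an external result from~\cite{Z2015b} in the survey section on complex projective space and is not proved, or even sketched, anywhere in the present paper. So there is no ``paper's own proof'' to compare your proposal against.

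As for the proposal itself: the Wong--Rosay/Pinchuk scaling outline you describe is indeed the strategy used in~\cite{Z2015b}, and your invocation of dual convexity to produce the supporting complex hyperplane is exactly right. But be aware that the results you cite from the present paper (Theorems~\ref{thm:intro_metric} and~\ref{thm:dual_convex}) are stated for \emph{simple} $G$, while $\PGL_{d+1}(\Cb)$ viewed as a real Lie group is simple, so this is fine; however, the metric $C_\Omega$ here is built from a real irreducible representation via Theorem~\ref{thm:repn}, and its interaction with the complex-analytic scaling is not automatic. In~\cite{Z2015b} the metric used is constructed directly in the complex-projective setting, which makes the normal-families step cleaner than what you would get by quoting Theorem~\ref{thm:intro_metric} verbatim.

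Your identification of the genuine difficulty is accurate: under only $C^1$ regularity the Pinchuk rescaling does not a priori yield a half-space or a Siegel domain, because without a second-order expansion the tangential scales are not determined by the normal scale. The argument in~\cite{Z2015b} handles this not by a single rescaling but by first using the metric estimates to show $\partial\Omega$ is strictly $\Cb$-convex (every supporting complex hyperplane meets $\overline{\Omega}$ only at the point of tangency), and then exploiting that geometric input to control the limit. Your sketch of ``extracting symmetry dynamically from a one-parameter group'' is in the right spirit but is not how the cited paper actually closes the argument.
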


\subsection{Rigidity and convexity} The embeddings of symmetric spaces mentioned in Subsection~\ref{subsec:symmetric} are always convex in some affine chart, see for instance~\cite[Theorem 6.2]{N1965}. Thus it seems natural to consider proper quasi-homogeneous domains which are convex in some affine chart. In particular, we say a domain $\Omega \subset G/P$ is a \emph{convex divisible domain} if $\Omega$ is a bounded open convex set of some affine chart and there exists a discrete group $\Gamma \leq \Aut(\Omega)$ which acts properly discontinuously, freely, and co-compactly on $\Omega$. 

For some flag manifolds there are no non-homogeneous convex divisible domains. In particular, Frankel proved the following:

\begin{theorem}\cite{F1989}
Suppose $\Omega \subset \Cb^d$ is a bounded convex open set and there exists a discrete group $\Gamma$ of bi-holomorphic maps of $\Omega$ which acts properly discontinuously, freely, and co-compactly on $\Omega$. Then $\Omega$ is a bounded symmetric domain. 
\end{theorem}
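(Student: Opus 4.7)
The strategy is to promote the divisibility assumption to full homogeneity of $\Omega$ under its biholomorphism group, and then invoke the classical Borel--Koszul--Vinberg classification: a bounded homogeneous convex domain in $\Cb^d$ is a bounded symmetric domain. So the intermediate goal is to produce, for any two points $p,q \in \Omega$, a biholomorphism of $\Omega$ carrying $p$ to $q$.

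To that end I would use a Pinchuk-style affine rescaling argument. Fix $p \in \Omega$ and a compact $K \subset \Omega$ with $\Gamma \cdot K = \Omega$. For a boundary point $\xi \in \partial\Omega$ choose a sequence $x_n \in \Omega$ with $x_n \to \xi$ and elements $\gamma_n \in \Gamma$ so that $\gamma_n^{-1}(x_n) \in K$. Because $\Omega$ is a bounded convex domain the Kobayashi and Carath{\'e}odory pseudo-metrics coincide, are complete, and admit sharp comparison with Euclidean distance to $\partial \Omega$. Attach to each $x_n$ an affine automorphism $A_n$ of $\Cb^d$ translating $x_n$ to $0$ and linearly normalizing $\Omega$ near $x_n$, e.g.\ so that the Minkowski functional of $A_n(\Omega)$ based at $0$ takes a prescribed value along some fixed direction. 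Passing to a subsequence the convex sets $A_n(\Omega)$ converge in the local Hausdorff sense to a convex domain $\wh\Omega \subset \Cb^d$, and the maps $A_n \circ \gamma_n : (\Omega,p) \to (A_n(\Omega),0)$ form a normal family of biholomorphisms whose limit $F: \Omega \to \wh\Omega$ is itself a biholomorphism.

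The key geometric step is to vary the data: different choices of $\xi$ and of rescaling parameters produce different limits $\wh\Omega$, but every one of them is biholomorphic to $\Omega$. Comparing two such rescalings based at different target points, and pre- and post-composing, one extracts for each pair $(p,q) \in \Omega \times \Omega$ a biholomorphism of $\Omega$ carrying $p$ to $q$. This is the delicate part of the argument: one has to ensure that the normal families in play converge to \emph{biholomorphisms} and not to degenerate holomorphic maps whose image has lower dimension or lies in $\partial\Omega$. The tools here are the Carath{\'e}odory non-degeneracy estimates on bounded convex domains together with the fact that an automorphism of a bounded convex domain cannot collapse Kobayashi balls of fixed radius. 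I expect this non-collapse analysis to be the main obstacle, and it is precisely where convexity (rather than merely boundedness) is essential.

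Once $\Aut(\Omega)$ has been shown to act transitively, the classification of bounded homogeneous convex domains in $\Cb^d$ (Koszul, Vinberg, Vey) yields that $\Omega$ is a bounded symmetric domain, completing the proof.
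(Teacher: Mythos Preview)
The paper does not prove this theorem; it is quoted from Frankel~\cite{F1989} as an external result with no argument given. So there is no proof in the paper to compare your proposal against.

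That said, a comment on the proposal itself. The affine rescaling strategy you describe is indeed the engine of Frankel's original argument, and your description of the normal-family and non-degeneracy issues is accurate. The gap is in your final step. You write that ``the classification of bounded homogeneous convex domains in $\Cb^d$ (Koszul, Vinberg, Vey) yields that $\Omega$ is a bounded symmetric domain.'' This is not correct as stated: the Koszul--Vinberg--Vey results you have in mind concern \emph{real} convex cones and real projective domains, not complex bounded domains, and in the complex setting Pyatetskii-Shapiro constructed non-symmetric bounded homogeneous domains in $\Cb^d$ for $d \geq 4$. So ``homogeneous'' alone does not force ``symmetric'' here, and your closing appeal to a classification does not do what you need.

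What actually closes the argument in Frankel's paper is more than bare transitivity: one must exploit the compact quotient further (for instance, to force $\Aut(\Omega)$ to be semisimple rather than merely transitive, or equivalently to rule out the non-reductive automorphism groups that occur for the Pyatetskii-Shapiro examples). If you want to complete your sketch, that is the missing piece you should look for.
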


Thus if $G/P$ has a complex structure such that $G$ acts on $G/P$ homomorphically we see that the only convex divisible domains in $G/P$ are homogeneous. Frankel's proof uses many techniques from several complex variables and does not extend to domains in a general flag manifold. However in the special case of $\Gr_p(\Rb^{2p})$ the following is known.

\begin{theorem}\cite{vLZ2015}
Suppose $p > 1$ and $\Omega  \subset \Gr_p(\Rb^{2p})$ is a convex divisible domain. Then $\Omega$ is projectively isomorphic to $\Bc_{p,p}$.
\end{theorem}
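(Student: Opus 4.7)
The strategy is to combine a Frankel-style rescaling argument with a Lie-theoretic classification of transitive models inside $\PGL_{2p}(\Rb)$, and then transfer the conclusion from the limit back to $\Omega$ itself.

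\emph{Setup.} Using the affine chart identification $\Ab \cong M_{p,p}(\Rb)$ from Example~\ref{ex:symmetric}, view $\Omega$ as a bounded open convex subset of $\Rb^{p^2}$, with $\Bc_{p,p}$ the operator norm unit ball $\{X : \norm{X} < 1\}$. The subgroup of $\PGL_{2p}(\Rb)$ preserving $\Ab$ acts on the chart by generalized affine maps $X \mapsto DXA^{-1} + C$ with $A, D \in \GL_p(\Rb)$ and $C \in M_{p,p}(\Rb)$. Since $\Omega$ is convex and bounded in an affine chart, it carries the Hilbert metric $H_\Omega$, which is complete, proper, and $\Aut(\Omega)$-invariant; by hypothesis $\Gamma$ acts freely, properly discontinuously, and cocompactly on $(\Omega, H_\Omega)$.

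\emph{Rescaling.} First I would fix a base point $x_0 \in \Omega$ and pick $y_n \in \Omega$ with $y_n \to \partial \Omega$. By Hilbert cocompactness one can choose $\gamma_n \in \Gamma$ so that $\gamma_n(y_n)$ remains in a fixed compact subset of $\Omega$; then $\gamma_n \to \infty$ in $\PGL_{2p}(\Rb)$. After passing to a subsequence and pre- or post-composing with affine automorphisms of $\Ab$, the domains $\gamma_n^{-1}(\Omega)$ Hausdorff-converge to a limit convex open set $\Omega_\infty \subset M_{p,p}(\Rb)$, and a rescaled version of $\gamma_n$ converges to a non-trivial one-parameter subgroup of $\Aut(\Omega_\infty) \leq \PGL_{2p}(\Rb)$. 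Iterating this rescaling, or varying the base point together with a Cartan-type fixed-point argument, upgrades $\Omega_\infty$ to a proper homogeneous convex open subset of $\Gr_p(\Rb^{2p})$.

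\emph{Classification and transfer.} Next I would classify the proper convex homogeneous domains in $\Gr_p(\Rb^{2p})$: a transitive subgroup $H \leq \PGL_{2p}(\Rb)$ with compact point stabilizer preserving a bounded convex open subset of $M_{p,p}(\Rb)$ must, by a Levi-decomposition and real-rank analysis inside $\PGL_{2p}(\Rb)$, be conjugate to $\PO(p,p)$, and so $\Omega_\infty$ is projectively equivalent to $\Bc_{p,p}$. To transfer this back to $\Omega$, note that the construction can be performed at every sequence $y_n \to \partial \Omega$; combined with divisibility, the pointwise identification of rescaling limits with $\Bc_{p,p}$ — together with a Benoist-type rigidity statement for convex divisible sets with a prescribed symmetric rescaling limit — forces $\Omega$ itself to be projectively isomorphic to $\Bc_{p,p}$.

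\emph{Main obstacle.} The hardest step is the classification of the homogeneous model. The symmetry group is constrained to lie inside $\PGL_{2p}(\Rb)$, not in the vastly larger $\PGL_{p^2+1}(\Rb)$ that acts by arbitrary projective transformations of the chart, so the classical Koecher--Vinberg classification of homogeneous convex cones does not apply off the shelf. A careful analysis of which parabolic subgroups of $\PGL_{2p}(\Rb)$ can stabilize the relevant boundary facets of a proper convex subset of $M_{p,p}(\Rb)$ — likely via the Jordan pair structure associated to the symmetric space $\PGL_{2p}(\Rb)/\PO(p,p)$ — is required to eliminate all candidates other than $\PO(p,p)$, and this is where the hypothesis $p > 1$ plays its role in excluding exotic low-dimensional possibilities.
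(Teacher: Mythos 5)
This theorem is not proved in the present paper; it is quoted from \cite{vLZ2015}, so your sketch has to be measured against the argument there. The decisive problem with your proposal is a sanity check: steps one and three (rescaling limits are homogeneous; a ``prescribed symmetric rescaling limit'' forces $\Omega$ itself to be symmetric) make no essential use of $\Gr_p(\Rb^{2p})$ or of $p>1$ --- only your step two does. If those two steps were valid as stated, the identical argument run in $\Pb(\Rb^{d+1})=\Gr_1(\Rb^{d+1})$ would show that every convex divisible domain there is homogeneous, contradicting the non-homogeneous examples of Johnson--Millson/Koszul, Kapovich, and Benoist recalled in Section 2 of this paper. Concretely: a Frankel-type rescaling produces at best one non-trivial one-parameter subgroup of $\Aut(\Omega_\infty)$, which is very far from transitivity, and ``iterating'' or ``a Cartan-type fixed-point argument'' does not bridge that gap; and the ``Benoist-type rigidity statement'' you invoke in the transfer step is not an existing theorem --- even in projective space, controlling all boundary blow-ups is not a recognized route to identifying $\Omega$ itself. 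Also note that with $\gamma_n\in\Gamma$ one has $\gamma_n^{-1}(\Omega)=\Omega$; the limit only becomes non-trivial after the affine renormalizations you mention in passing, and properness of the limit then has to be argued, not assumed.

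The actual proof in \cite{vLZ2015} is organized around the feature your sketch ignores and which is precisely where $p>1$ enters: for $p>1$ the boundary of a proper convex domain in $\Gr_p(\Rb^{2p})$ can never be strictly convex --- it must contain line segments in ``rank one'' directions --- and the invariant metrics $C_\Omega$ and $K_\Omega$ together with the cocompact $\Gamma$-action are used to control these boundary faces, establish regularity of $\partial\Omega$ at appropriate points, and manufacture one-parameter subgroups of $\Aut(\Omega)$ itself (not of a limit). One then shows $\Aut(\Omega)$ is non-discrete and large enough to contain a conjugate of $\PO(p,p)$ acting transitively, at which point $\Omega=\Bc_{p,p}$. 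So the engine is boundary geometry producing symmetries of $\Omega$ directly, rather than a classification of homogeneous limits; and the hypothesis $p>1$ guarantees this higher-rank boundary structure rather than ``excluding exotic low-dimensional possibilities'' in a classification of transitive subgroups. Your instinct that the self-duality of $\Gr_p(\Rb^{2p})$ and the Jordan-theoretic structure of the affine chart matter is correct, but as written the proposal has gaps at exactly the two places where the theorem's content lives.
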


Based on these examples we conjecture (also see Conjecture 1.7 in~\cite{vLZ2015}):

\begin{conjecture} Suppose $G$ is a connected non-compact simple Lie group with trivial center and $P \leq G$ is a parabolic subgroup. If $G/P$ is not isomorphic to a real projective space, then every convex divisible domain in $G/P$ is homogeneous. 
\end{conjecture}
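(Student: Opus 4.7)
The plan is to combine the rigidity established in this paper with a renormalization argument in the spirit of Frankel's proof \cite{F1989} and its projective adaptation in \cite{vLZ2015}. As a first reduction, Theorem~\ref{thm:proper_non_maximal} shows that any convex divisible domain $\Omega \subset G/P$ forces $P$ to be a maximal parabolic subgroup, so one may assume $G/P$ is attached to a single node of the Dynkin diagram of $G$. By Theorem~\ref{thm:dual_convex} the domain $\Omega$ is dual convex, and by Theorem~\ref{thm:intro_metric} it carries a complete $\Aut(\Omega)$-invariant metric $C_\Omega$; the co-compact $\Gamma$-action thus becomes a co-compact isometric action, which is the natural setting for the scaling arguments below.

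Next I would dispose of the case in which $G$ is complex, so that $G/P$ admits a $G$-invariant complex structure and is the compact dual of a bounded symmetric domain of non-compact type. In this setting the affine chart is biholomorphic to $\Cb^N$, the convexity of $\Omega$ is preserved by the chart, and $\Gamma$ acts by biholomorphisms on the bounded convex open set $\Omega \subset \Cb^N$; Frankel's theorem then identifies $\Omega$ with a bounded symmetric domain. Together with the conjecture's exclusion of real projective space, this reduces the problem to the case in which $G$ is a non-complex real simple Lie group and $G/P$ is not of the form $\Pb(\Rb^d)$.

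For these remaining cases the argument must be performed entirely in real projective geometry. The main technical step is to produce a biproximal element $\gamma \in \Gamma$ with attracting and repelling fixed points $x_\pm \in \partial \Omega$, using the representation theory of $\Aut(\Omega) \subset G$ and a Tits-alternative-style analysis of the Jordan projections that arise from the co-compact action on $(\Omega, C_\Omega)$. One then chooses a sequence $g_n \in G$ that rescales iterates of $\gamma$ into a one-parameter subgroup, and passes to a Hausdorff limit $\Omega_\infty$ of $g_n \Omega$ in an affine chart centred at $x_+$. Completeness of $C_\Omega$ together with dual convexity should guarantee that $\Omega_\infty$ remains a proper dual convex domain, while $\Aut(\Omega_\infty)$ acquires the new one-parameter subgroup from the rescaling. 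Iterating the procedure at enough boundary points should promote $\Aut(\Omega_\infty)$ to act transitively, after which Nagano's classification \cite{N1965} together with the models of Subsection~\ref{subsec:symmetric} identifies $\Omega_\infty$ with a symmetric domain such as $\Bc_{p,q}$. A final transfer step --- using the co-compact $\Gamma$-action to push the symmetries of $\Omega_\infty$ back to $\Omega$ --- would then force $\Omega$ to be projectively equivalent to this model, hence homogeneous.

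The principal obstacle I expect is the transfer step. In the complex setting this rigidity is supplied by powerful holomorphic tools such as Wong--Rosay and Bedford--Pinchuk scaling, but no comparable machinery is currently available in general non-complex flag manifolds; the $\Gr_p(\Rb^{2p})$ case resolved in \cite{vLZ2015} relies heavily on the explicit matrix description of $\Bc_{p,p}$, which is not available in the exceptional flag manifolds. A secondary difficulty, especially sharp for the exceptional cases, is ruling out that $\Omega_\infty$ degenerates --- that it is neither a half-space nor a lower-dimensional domain --- which would require uniform boundary regularity estimates that $C_\Omega$ alone does not obviously supply. Resolving the conjecture in full generality will likely require either a genuinely new projective analogue of the holomorphic scaling machinery or a structural approach exploiting the full Tits building of $G$ to rule out non-symmetric scaling limits uniformly across all non-Hermitian flag manifolds at once.
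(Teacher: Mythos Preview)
The statement you are attempting to prove is a \emph{Conjecture} in the paper, not a theorem: the paper offers no proof of it, only supporting evidence (Frankel's theorem for the Hermitian case and the $\Gr_p(\Rb^{2p})$ result of \cite{vLZ2015}). There is therefore no ``paper's own proof'' to compare your attempt against.

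What you have written is not a proof but a research outline, and you say as much yourself in the final two paragraphs. The preliminary reductions you make are sound and do follow from the paper: Theorem~\ref{thm:proper_non_maximal} forces $P$ to be maximal, Theorem~\ref{thm:dual_convex} gives dual convexity, and Theorem~\ref{thm:intro_metric} supplies the complete invariant metric $C_\Omega$. The reduction of the complex case to Frankel's theorem is also correct in spirit, though you should be careful that ``$G$ is complex'' is exactly the condition under which $G$ acts holomorphically on an affine chart biholomorphic to $\Cb^N$; this does cover the Hermitian flag manifolds relevant here.

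The genuine gap is precisely where you locate it: the rescaling-and-transfer step. Producing a limit domain $\Omega_\infty$ with a larger automorphism group is plausible, but (i) controlling degeneration of $\Omega_\infty$ without boundary regularity, (ii) iterating to transitivity, and (iii) transferring the symmetry back to $\Omega$ are each open problems in this generality. The $\Gr_p(\Rb^{2p})$ argument of \cite{vLZ2015} uses the explicit linear-algebra model of $\Bc_{p,p}$ and a rank-one/rank-$p$ dichotomy for boundary faces that has no known analogue for general maximal parabolics, let alone the exceptional ones. So your proposal is a reasonable description of what a proof \emph{might} look like, but it does not constitute one, and the paper does not claim otherwise.
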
 
 
\section{The automorphism group is closed}

It will be useful in what follows to know the automorphism group of an open domain is always closed. 

\begin{proposition}\label{prop:closed}
 Suppose $G$ is a connected semisimple Lie group with finite center. If $P \leq G$ is a parabolic subgroup and $\Omega \subset G/P$ is an open set, then
$\Aut(\Omega) \leq G$ is closed.
\end{proposition}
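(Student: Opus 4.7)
The plan is to take a convergent sequence $g_n \to g$ with $g_n \in \Aut(\Omega)$ and show directly that $g \Omega = \Omega$, using that $G$ acts on $G/P$ by homeomorphisms and that inversion is continuous in the Lie group $G$, so $g_n^{-1} \to g^{-1}$ automatically.

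The first step is to take limits pointwise. For any $x \in \Omega$, we have $g_n x \in \Omega$ and $g_n x \to g x$ by joint continuity of the action, so $g x \in \overline{\Omega}$. The analogous statement for $x \in \partial \Omega$ is the key: since each $g_n$ is a homeomorphism of $G/P$ with $g_n \Omega = \Omega$, it also preserves $\partial \Omega = \overline{\Omega} \setminus \Omega$, so $g_n x \in \partial \Omega$ and hence $g x \in \overline{\partial \Omega} = \partial \Omega$. Thus $g \partial \Omega \subseteq \partial \Omega$. Repeating the argument with the sequence $g_n^{-1} \to g^{-1}$, which also lies in $\Aut(\Omega)$, yields $g^{-1} \partial \Omega \subseteq \partial \Omega$, so altogether $g \partial \Omega = \partial \Omega$.

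The last step, which I expect to be the subtle one, is to upgrade $g \overline{\Omega} \subseteq \overline{\Omega}$ to $g \Omega \subseteq \Omega$ (a priori $g$ could send an interior point to a boundary point, since an open set contained in $\overline{\Omega}$ need not lie in $\Omega$ when $\Omega$ is an arbitrary open set — think of $\Omega$ with ``interior'' boundary). Here the invariance of $\partial \Omega$ closes the gap cleanly: if $x \in \Omega$ satisfied $g x \in \partial \Omega$, then applying $g^{-1}$ and using $g^{-1}(\partial \Omega) = \partial \Omega$ would give $x \in \partial \Omega$, contradicting $x \in \Omega$. Since $\overline{\Omega} = \Omega \sqcup \partial \Omega$ for an open set, we conclude $g x \in \Omega$. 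By symmetry $g^{-1} \Omega \subseteq \Omega$ as well, so $g \Omega = \Omega$ and $g \in \Aut(\Omega)$.

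Note that the argument only uses that $G$ is a topological group acting continuously on $G/P$ by homeomorphisms; the semisimplicity and parabolic hypotheses play no role in the proof. The main conceptual point is that one cannot control $g \Omega$ directly from $g \overline{\Omega} \subseteq \overline{\Omega}$ alone, but must detour through the invariance of the boundary.
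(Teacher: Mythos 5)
Your proof is correct, and it takes a genuinely different route from the paper's. The paper's argument is metric and quantitative: it fixes a Riemannian distance $d$ on $G/P$, uses the relative compactness of $\{g_n\}$ in $G$ to obtain a uniform bi-Lipschitz constant $K$, and shows that the distance-to-complement function $\delta_\Omega(x) = d(x, G/P \setminus \Omega)$ satisfies $\delta_\Omega(g_n x) \geq \frac{1}{K}\delta_\Omega(x)$, so that the limit $gx$ stays a definite distance from the complement and hence lies in $\Omega$; the inverse sequence then gives the reverse inclusion. Your argument replaces this with a purely topological mechanism: each $g_n$ preserves $\partial\Omega$ because it is a homeomorphism preserving $\Omega$, the closedness of $\partial\Omega$ passes this to the limit, and the two-sided invariance $g\,\partial\Omega = \partial\Omega$ is exactly what rules out an interior point being sent to the boundary --- which, as you correctly flag, is the step that $g\overline{\Omega} \subseteq \overline{\Omega}$ alone cannot deliver. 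Your version is more elementary and more general (it needs only a topological group acting continuously by homeomorphisms, with no metric, smoothness, or compactness input), while the paper's version has the mild advantage of producing a quantitative lower bound on $\delta_\Omega(gx)$ and of introducing the distance-to-boundary function, which is a standard tool in this circle of ideas. Both proofs share the same outer skeleton (work with a convergent sequence, then run the argument again on the inverses), but the core step is genuinely different.
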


\begin{proof}
 Fix a distance $d$ on $G/P$ which is induced by a Riemannian metric. Suppose that a sequence $\varphi_n \in \Aut(\Omega)$ converges to some $\varphi \in G$. 
Now since 
\begin{align*}
 \{ \varphi_n : n \in \Nb\} \subset G
\end{align*}
 is relatively compact, there exists some $K \geq 1$ such that 
\begin{align*}
\frac{1}{K} d(x,y) \leq d(\varphi_n x, \varphi_n y) \leq K d(x,y)
\end{align*}
for all $x,y \in G/P$ and $n \in \Nb$.

Next define the function $\delta_\Omega: \Omega \rightarrow \Rb_{>0}$ by
\begin{align*}
\delta_\Omega(x) = \inf \{ d(x,y) : y \in G/P - \Omega\}.
\end{align*}

Now fix $x \in \Omega$, then $\delta_\Omega(\varphi_n x) \geq \frac{1}{K} \delta_\Omega(x)$ for all $n \in \Nb$. Since $d(\varphi x, \varphi_n x) < \frac{1}{K} \delta_\Omega(x)$ for $n$ large, we see that $\varphi x \in \Omega$. Since $x \in \Omega$ was arbitrary we see that $\varphi(\Omega) \subset \Omega$. Applying the same argument to the sequence 
$\varphi_n^{-1} \rightarrow \varphi^{-1}$ we see that $\varphi^{-1}(\Omega) \subset \Omega$. Thus $\varphi(\Omega) = \Omega$ and so $\varphi \in \Aut(\Omega)$. 
\end{proof}

\section{Parabolic subgroups}

For the rest of this section suppose that $G$ is a connected semisimple Lie group without compact factors and with trivial center. Let $K \leq G$ be a maximal compact subgroup. Then the manifold $X = G/K$ has a $G$-invariant non-positively curved Riemannian metric $g$ and $(X,g)$ is a symmetric space. 

Let $X(\infty)$ be the ideal boundary of  $X$. For a geodesic $\gamma: \Rb \rightarrow X$ let 
\begin{align*}
\gamma(\infty) = \lim_{t \rightarrow \infty} \gamma(t) \in X(\infty)
\end{align*}
and 
\begin{align*}
\gamma(-\infty) = \lim_{t \rightarrow- \infty} \gamma(t) \in X(\infty).
\end{align*}

\begin{definition} \ 
\begin{enumerate}
\item A subgroup $P \leq G$ is called \emph{parabolic} if $P$ is the stabilizer in $G$ of some point $x \in X(\infty)$. 
\item Two parabolic subgroups $P,Q \leq G$ are said to be \emph{opposite} if there exists a geodesic $\gamma:\Rb \rightarrow X$ such that $P$ is the stabilizer of $\gamma(\infty)$ and $Q$ is the stabilizer of $\gamma(-\infty)$. 
\end{enumerate}
\end{definition}

In this section we recall the basic properties of parabolic subgroups. We will mostly rely on the exposition in Eberlein's book on symmetric spaces~\cite{E1996} and Warner's book on harmonic analysis on semisimple groups~\cite{W1972}. 

\begin{theorem}\label{thm:para_1}
With the notation above:
\begin{enumerate}
\item There are only finitely many conjugacy classes of parabolic subgroups.
\item Suppose $P,Q_1,Q_2 \leq G$ are parabolic subgroups. If $Q_1,Q_2$ are both opposite to $P$, then there exists some $p \in P$ such that $pQ_1p^{-1} = Q_2$. 
\end{enumerate}
\end{theorem}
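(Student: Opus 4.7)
The plan is to derive both parts from the standard structure theory of parabolic subgroups of real semisimple Lie groups, following for example \cite{W1972} or \cite{E1996}. The central ingredients are the classification of parabolics up to conjugacy via standard parabolics (indexed by subsets of simple restricted roots) and the Langlands decomposition $P = LN$ together with its behavior under opposition.

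For part (1), I would fix a minimal parabolic subgroup $P_0 \leq G$, a maximally $\Rb$-split Cartan subalgebra of $\gL$ contained in the Lie algebra of $P_0$, and the associated system of simple restricted roots $\Delta$. The \emph{standard parabolic subgroups} are those containing $P_0$, and they are in bijection with subsets $I \subseteq \Delta$. Two classical facts then conclude the argument: every parabolic subgroup of $G$ is conjugate to a unique standard parabolic, and $\Delta$ is finite of cardinality equal to the real rank of $G$. Together these force at most $2^{|\Delta|}$ conjugacy classes.

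For part (2), I would argue as follows. Fix an auxiliary parabolic $P^-$ opposite to $P$ and let $L = P \cap P^-$ be the common Levi subgroup, so that $P = LN$ where $N$ denotes the unipotent radical. The key structural input, recorded in \cite{W1972}, is that the map $n \mapsto n P^- n^{-1}$ is a bijection from $N$ onto the set of all parabolic subgroups of $G$ opposite to $P$. Granted this, given $Q_1, Q_2$ opposite to $P$, write $Q_i = n_i P^- n_i^{-1}$ with $n_i \in N$ and take $p = n_2 n_1^{-1} \in N \subseteq P$; then $p Q_1 p^{-1} = Q_2$, as required. Note this simultaneously shows that all parabolics opposite to $P$ lie in a single conjugacy class.

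The main obstacle, and the only place where the geometric definition of opposition used in the paper must be reconciled with the algebraic one, lies in identifying $P \cap Q$ with a Levi subgroup of $P$ whenever $Q$ stabilizes $\gamma(-\infty)$ for a geodesic $\gamma$ with $P$ stabilizing $\gamma(\infty)$. This is standard in symmetric space geometry: the group $P \cap Q$ preserves the maximal flat containing $\gamma$ and acts on it by translations extending those along $\gamma$, and one checks algebraically that this is precisely the centralizer of a split torus, hence a Levi subgroup of both $P$ and $Q$. Since all Levi subgroups of $P$ are $N$-conjugate, $Q$ then lies in the image of the map $n \mapsto n P^- n^{-1}$, closing the argument.
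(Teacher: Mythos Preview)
Your proposal is correct but follows a more algebraic route than the paper. For part (1) the paper argues geometrically: fix a maximal flat $F$ in $X$, translate any geodesic into $F$, and use the root space decomposition associated to $F$ to see that only finitely many stabilizers of points in $F(\infty)$ arise. Your argument via standard parabolics indexed by subsets of $\Delta$ is the algebraic counterpart of the same phenomenon. For part (2) the difference is sharper: the paper stays entirely inside the symmetric space picture, using that $P$ acts transitively on $X$ (Proposition~2.17.1 of \cite{E1996}) to move $\gamma_1(0)$ to $\gamma_2(0)$, and then invoking the Weyl-chamber description of stabilizers (Proposition~2.17.15 of \cite{E1996}) to conclude $pQ_1p^{-1}=Q_2$. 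Your argument via the $N$-parametrization of opposites is cleaner and even yields the stronger conclusion $p \in N$, but it forces you to reconcile the paper's geodesic definition of opposition with the Levi-intersection definition implicit in the Langlands decomposition---a translation the paper avoids by never leaving the geometric language. Your final paragraph handles this translation adequately, though the phrase ``one checks algebraically'' could be replaced by a direct citation (e.g., Proposition~2.17.13 in \cite{E1996}) rather than the sketch you give.
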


\begin{proof}
For the first assertion see Corollary 2.17.23 in~\cite{E1996}. The idea is: fix a maximal flat $F$ in $X$, then given a geodesic $\gamma: X \rightarrow \Rb$ there exists $g \in G$ such that $g \gamma \subset F$. Then using the root space decomposition of $\gL$, the Lie algebra of $G$, associated to $F$ one shows that only finitely many different groups arise as stabilizers of points $x \in F(\infty)$. 

Next suppose that $Q_1,Q_2$ are both opposite to $P$. For $i\in \{1,2\}$, let $\gamma_i: \Rb \rightarrow X$ be a geodesic where $P$ is the stabilizer of $\gamma_i(\infty)$ and $Q_i$ is the stabilizer of $\gamma_i(-\infty)$. Since $P$ acts transitively on $X$ (see Proposition 2.17.1 in~\cite{E1996}), there exists $p \in P$ such that $p\gamma_1(0)=\gamma_2(0)$. Then $pQ_1p^{-1}$ is the stabilizer of $p \gamma_1(-\infty)$. Now since $P$ is the stabilizer of $p\gamma_1(\infty)$ and $\gamma_2(\infty)$, Proposition 2.17.15 in~\cite{E1996} implies that $d(p)\gamma_1^\prime(0)$ and $\gamma_2^\prime(0)$ are in the same Weyl Chamber $W$ in $S_{\gamma_2(0)} X$ (the unit tangent sphere of $X$ above $\gamma_2(0)$). Finally, since $-W \subset S_{\gamma_2(0)}X$ is also a Weyl chamber we see that $-d(p)\gamma_1^\prime(0)$ and $-\gamma_2^\prime(0)$ are also in the same Weyl chamber. So appealing to Proposition 2.17.15 in~\cite{E1996} again, we see that $pQ_1p^{-1} = Q_2$. 
\end{proof}

\begin{corollary}\label{cor:para_1}
Suppose $P, Q \leq G$ are opposite parabolic subgroups. Then $gPg^{-1}$ and $hQh^{-1}$ are opposite if and only if $g^{-1}h \in PQ$. 
\end{corollary}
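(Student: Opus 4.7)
The plan is to reduce to the case where one of the conjugating elements is trivial and then apply Theorem~\ref{thm:para_1}(2) together with the fact that parabolic subgroups are self-normalizing.

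First I would observe that opposition is invariant under simultaneous conjugation: if $A,B\leq G$ are opposite and $r\in G$, then $rAr^{-1}$ and $rBr^{-1}$ are opposite, since conjugating any witnessing geodesic by $r$ produces a geodesic witnessing opposition of the conjugates. Applying this with $r=g^{-1}$, I see that $gPg^{-1}$ is opposite to $hQh^{-1}$ if and only if $P$ is opposite to $kQk^{-1}$, where $k=g^{-1}h$. Moreover, $g^{-1}h\in PQ$ if and only if $k\in PQ$. So it suffices to prove: $P$ is opposite to $kQk^{-1}$ if and only if $k\in PQ$.

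For the ``if'' direction, suppose $k=pq$ with $p\in P$ and $q\in Q$. Then $kQk^{-1}=pqQq^{-1}p^{-1}=pQp^{-1}$, and since $P$ and $Q$ are opposite, simultaneous conjugation by $p$ shows $P=pPp^{-1}$ is opposite to $pQp^{-1}=kQk^{-1}$. For the ``only if'' direction, suppose $P$ is opposite to $kQk^{-1}$. Then $Q$ and $kQk^{-1}$ are both parabolic subgroups opposite to $P$, so Theorem~\ref{thm:para_1}(2) yields $p\in P$ with $pQp^{-1}=kQk^{-1}$. Hence $p^{-1}k$ normalizes $Q$; using the standard fact that parabolic subgroups of a connected semisimple Lie group are self-normalizing (see e.g.~\cite{W1972}), we conclude $p^{-1}k\in Q$, i.e.\ $k=p(p^{-1}k)\in PQ$.

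The essential input is Theorem~\ref{thm:para_1}(2) (existence of a $P$-conjugator between any two opposites of $P$) together with the self-normalization of parabolics, so there is no real obstacle; the only point that must be invoked by name, rather than derived, is the self-normalizing property $N_G(Q)=Q$.
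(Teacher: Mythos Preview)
Your proof is correct and follows essentially the same route as the paper: reduce by simultaneous conjugation to the case $g=e$, then use Theorem~\ref{thm:para_1}(2) together with self-normalization of parabolics (the paper cites Proposition~2.17.25 in~\cite{E1996} rather than~\cite{W1972}, and bundles both directions into a single chain of biconditionals, but the argument is the same).
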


\begin{proof}
Clearly $gPg^{-1}$ and $hQh^{-1}$ are opposite if and only if $P$ and $(g^{-1}h)Q(g^{-1}h)^{-1}$ are opposite. But by Theorem~\ref{thm:para_1} this happens if and only if $pg^{-1}hQ(pg^{-1}h)^{-1}=Q$ for some $p \in P$. But parabolic subgroups are their own normalizer, see Proposition 2.17.25 in~\cite{E1996}, and so this happens if and only if $g^{-1}h \in PQ$. 
\end{proof}

Since $PQ \subset G$ is open and dense (see for instance~\cite[Proposition 1.2.4.10]{W1972}), we immediately see that:

\begin{corollary}\label{cor:opposite_open_dense}
Suppose $P, Q \leq G$ are opposite parabolic subgroups. Then the set
\begin{align*}
\Oc:=\left\{ (gP, hQ) : gPg^{-1} \text{ and } hQh^{-1} \text{ are opposite} \right\} \subset G/P \times G/Q
\end{align*}
is open and dense.
\end{corollary}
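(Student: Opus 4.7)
The plan is to reduce the statement to the density assertion for $PQ$ quoted from Warner, using Corollary~\ref{cor:para_1} to translate the opposition condition into a membership condition, and then using openness of the quotient map $G \times G \to G/P \times G/Q$ to push the result down.

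First I would check that the set
\begin{align*}
\wt{\Oc} := \{ (g,h) \in G \times G : g^{-1} h \in PQ \}
\end{align*}
is the full preimage of $\Oc$ under the quotient map $\pi: G \times G \to G/P \times G/Q$. By Corollary~\ref{cor:para_1} the condition $g^{-1}h \in PQ$ is equivalent to $gPg^{-1}$ and $hQh^{-1}$ being opposite, so membership in $\wt{\Oc}$ only depends on the pair $(gP, hQ)$; indeed $(gp)^{-1}(hq) = p^{-1}(g^{-1}h)q \in PQ$ iff $g^{-1}h \in PQ$, since $P,Q$ are subgroups. Hence $\wt{\Oc} = \pi^{-1}(\Oc)$.

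Next consider the continuous map $\mu: G \times G \to G$, $\mu(g,h) = g^{-1} h$. Then $\wt{\Oc} = \mu^{-1}(PQ)$. Since $PQ$ is open in $G$, continuity of $\mu$ gives that $\wt{\Oc}$ is open. For density, observe that for any $(g_0, h_0) \in G \times G$ and any neighborhood $V$ of $(g_0, h_0)$, the image $\mu(V)$ is a neighborhood of $g_0^{-1}h_0$ (the map $h \mapsto g^{-1}h$ is a diffeomorphism for each fixed $g$, so $\mu$ is open); since $PQ$ is dense in $G$, $\mu(V)$ meets $PQ$, so $V$ meets $\wt{\Oc}$. Thus $\wt{\Oc}$ is open and dense in $G \times G$.

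Finally, the quotient map $\pi: G\times G \to G/P \times G/Q$ is continuous, surjective, and open. Since $\wt{\Oc}$ is saturated with respect to $\pi$ and open, $\Oc = \pi(\wt{\Oc})$ is open in $G/P \times G/Q$. For density, let $U \subset G/P \times G/Q$ be a nonempty open set; then $\pi^{-1}(U)$ is nonempty and open in $G\times G$, hence meets the dense set $\wt{\Oc}$, and projecting gives a point of $U \cap \Oc$. There is no real obstacle here — the only subtlety is checking that $\wt{\Oc}$ is saturated, which is immediate from Corollary~\ref{cor:para_1}.
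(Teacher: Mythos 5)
Your proof is correct and follows the same route the paper intends: the paper derives the corollary ``immediately'' from Corollary~\ref{cor:para_1} together with the fact that $PQ$ is open and dense in $G$, and your argument simply fills in the routine details (saturation of $\mu^{-1}(PQ)$ under the quotient and openness of the maps involved). No issues.
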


\begin{proposition}\label{prop:bigger_para}
Suppose $P,Q \leq G$ are opposite parabolic subgroups. If $P_0 \gneqq P$ is a parabolic subgroup, then there exists a unique parabolic subgroup $Q_0 \gneqq Q$ such that $P_0$ and $Q_0$ are opposite. Moreover:
\begin{enumerate}
\item If $gPg^{-1}$ and $hQh^{-1}$ are opposite, then $gP_0 g^{-1}$ and $hQ_0 h^{-1}$ are opposite.
\item There exists $w \in P_0$ such that $wPw^{-1}$ and $Q$ are not opposite. 
\end{enumerate}
\end{proposition}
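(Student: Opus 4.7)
The plan is to reduce to standard opposite parabolics and then carry out each assertion root-theoretically. By Theorem~\ref{thm:para_1}, any opposite pair is $G$-conjugate to a standard pair $(P_\Theta, P_\Theta^-)$ for some $\Theta \subseteq \Delta$ (simple roots of a fixed Cartan subspace $\aL$); since every assertion in the proposition is equivariant under simultaneous $G$-conjugation of $P$, $Q$, and $P_0$, I may assume $P = P_\Theta$ and $Q = P_\Theta^-$. The classification of parabolics containing the minimal parabolic $P_\emptyset$ then forces $P_0 = P_{\Theta_0}$ for some $\Theta_0 \supsetneq \Theta$.

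The natural candidate is $Q_0 := P_{\Theta_0}^-$, which properly contains $Q$ since $\Sigma^+_{\Theta_0} \supsetneq \Sigma^+_\Theta$; to see it is opposite to $P_0$, I would pick $H_0 \in \overline{\aL^+}$ with $\{\alpha \in \Delta : \alpha(H_0) = 0\} = \Theta_0$, so that the geodesic in $X=G/K$ defined by $H_0$ has $P_0$ and $Q_0$ as the stabilizers of its two ideal endpoints. For uniqueness: any parabolic $Q_0'$ containing $Q$ also contains $P_\emptyset^-$, hence is standard with respect to $P_\emptyset^-$ and of the form $P_{\Theta'}^-$ with $\Theta' \supseteq \Theta$. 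Theorem~\ref{thm:para_1}(2) makes $Q_0'$ $G$-conjugate to $P_{\Theta_0}^-$, and since two standard parabolics with respect to the same minimal parabolic that are $G$-conjugate must coincide (essentially because parabolics are self-normalizing), one concludes $\Theta' = \Theta_0$ and $Q_0' = Q_0$.

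Part (1) is immediate from Corollary~\ref{cor:para_1}: the containments $P \subset P_0$ and $Q \subset Q_0$ give $PQ \subseteq P_0 Q_0$, so $g^{-1}h \in PQ$ forces $g^{-1}h \in P_0 Q_0$, whence $gP_0g^{-1}$ and $hQ_0 h^{-1}$ are opposite. For part (2), I would choose a simple root $\alpha \in \Theta_0 \setminus \Theta$ and let $w \in L_{\Theta_0} \subseteq P_0$ be a representative of the simple reflection $s_\alpha \in W_{\Theta_0}$. The generalized Bruhat decomposition $G = \bigsqcup_{[\sigma] \in W_\Theta \backslash W / W_\Theta} Q\sigma P$ has $QP$ as the unique open cell; since $\alpha \notin \Theta$ means $s_\alpha \notin W_\Theta$, the element $w$ lies in a non-trivial Bruhat cell, so $w \notin QP$. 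Corollary~\ref{cor:para_1} then yields that $wPw^{-1}$ is not opposite to $Q$. The most delicate step is the uniqueness of $Q_0$, where one must combine carefully the standard-parabolic classification with the single-conjugacy-class statement for opposites in Theorem~\ref{thm:para_1}(2).
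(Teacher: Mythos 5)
Your proof is correct, but it takes a genuinely different route from the paper's. The paper works entirely inside the symmetric-space picture it set up: existence and uniqueness of $Q_0$ come from placing all the relevant ideal points in the boundary of a single maximal flat $F$ and invoking Eberlein's correspondence between faces of Weyl chambers in $F(\infty)$ and parabolics containing $P$ or $Q$ (Propositions 2.17.13 and 3.6.26 of \cite{E1996}), and part (2) comes from Warner's Lemma 1.2.4.6 producing an element $w \in W \cap P_0$ moving $\gamma(\infty)$ out of its Weyl chamber. You instead normalize to standard parabolics $P = P_\Theta$, $Q = P_\Theta^-$, write down $Q_0 = P_{\Theta_0}^-$ explicitly, and get uniqueness from the classification of parabolics containing a fixed minimal one together with Theorem~\ref{thm:para_1}(2); for part (2) you take a representative of a simple reflection $s_\alpha$, $\alpha \in \Theta_0 \setminus \Theta$, and use disjointness of the relative Bruhat cells $Q\sigma P$, $\sigma \in W_\Theta \backslash W / W_\Theta$, to see $w \notin QP$ (which by Corollary~\ref{cor:para_1} is exactly non-oppositeness). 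The two arguments for (2) are the same idea in different clothing --- a Weyl-group element of $P_0$ not in $W_\Theta$ --- and your version buys explicitness of both $Q_0$ and $w$, at the cost of importing the dictionary between the paper's ideal-boundary definition of parabolic/opposite and the root-theoretic normal forms (which you do use correctly, via the geodesic determined by $H_0 \in \overline{\aL^+}$ with vanishing set $\Theta_0$). The one step you wave at is the claim that two $G$-conjugate parabolics both containing $P_\emptyset^-$ coincide; this is standard but does require its own (Bruhat-decomposition plus self-normalizing) argument or a citation, and it plays the same load-bearing role in your uniqueness proof that Eberlein's Proposition 2.17.13 plays in the paper's. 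Part (1) is identical in both treatments.
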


\begin{proof}
Let $\gamma: \Rb \rightarrow X$ be a geodesic such that $P$ is the stabilizer of $\gamma(\infty)$ and $Q$ is the stabilizer of $\gamma(-\infty)$. Let $F$ be a maximal flat containing $\gamma$. Pick $x \in X(\infty)$ such that $P_0$ is the stabilizer of $x \in X(\infty)$. Then $x \in F(\infty)$ by Proposition 3.6.26 in~\cite{E1996}. Now using Proposition 2.17.13 in~\cite{E1996} we see that there exists a parabolic subgroup $Q_0 \geq Q$ such that $P_0$ and $Q_0$ are opposite. Moreover, if $Q_1 \geq Q$ is a parabolic subgroup and $Q_1$ is the stabilizer of $y \in X(\infty)$ then, by Proposition 3.6.26 in~\cite{E1996} again,  $y \in F(\infty)$. So using Proposition 2.17.13 in~\cite{E1996} again we see that $Q_0$ is unique. 

By Corollary~\ref{cor:para_1}, if $gPg^{-1}$ and $hQh^{-1}$ are opposite, then $g^{-1}h \in PQ$, then $g^{-1}h \in P_0 Q_0$, then $gP_0 g^{-1}$ and $hQ_0 h^{-1}$ are opposite.

As before let $F$ be a maximal flat containing $\gamma$. Then $F(\infty)$ decomposes into Weyl chambers and if $P_z$ is the stabilizer of $z \in F(\infty)$ then $P_z$ is opposite to $Q$ if and only if $z$ and $\gamma(\infty)$ are in the same Weyl chamber (by Proposition 2.17.13 in~\cite{E1996}). Let $K$ be the stabilizer of $\gamma(0)$ and let 
\begin{align*}
W = \{ k \in K : k F = F\}.
\end{align*}
Now by Lemma 1.2.4.6 in~\cite{W1972} there exists $w \in W \cap P_0$ such that $w \gamma(\infty)$ is not in the same Weyl chamber as $\gamma(\infty)$. Hence $wPw^{-1}$ is not opposite to $Q$.
\end{proof}

\subsection{Representations}

Using the theory of irreducible representations of reductive groups {Gu{\'e}ritaud} et al.~\cite[Lemma 4.5, Proposition 4.6]{GGKW2015b} proved the following theorem.

\begin{theorem}\label{thm:repn}
Suppose $P,Q \leq G$ are opposite parabolic subgroups. Then there exists an real vector space $V$, an irreducible representation $\tau: G \rightarrow \PGL(V)$, a line $\ell \subset V$, and a hyperplane $H \subset V$ such that:
\begin{enumerate}
\item $\ell + H = V$.
\item The stabilizer of $\ell$ in $G$ is $P$ and the stabilizer of $H$ in $G$ is $Q$. 
\item $gPg^{-1}$ and $hQ h^{-1}$ are opposite if and only if $\tau(g)\ell$ and $\tau(g)H$ are transverse. 
\end{enumerate}
\end{theorem}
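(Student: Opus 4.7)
The plan is to use highest-weight theory, following Tits' classical construction of representations associated to parabolic subgroups. Fix a geodesic $\gamma: \Rb \rightarrow X$ so that $P$ stabilizes $\gamma(\infty)$ and $Q$ stabilizes $\gamma(-\infty)$, and fix a maximal flat $F$ containing $\gamma$. This choice determines a maximal $\Rb$-split Cartan subalgebra $\aL \subset \gL$, a system of restricted roots $\Sigma$, and a system of positive roots $\Sigma^+$ with simple roots $\Delta$ so that $P$ is the standard parabolic attached to some subset $\theta \subset \Delta$ and $Q$ is the opposite standard parabolic.

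First I would choose a dominant weight $\lambda \in \aL^*$ whose stabilizer in the restricted Weyl group $W$ is exactly the parabolic Weyl subgroup $W_\theta$ generated by reflections in $\theta$; equivalently, $\lambda$ is a strictly positive linear combination of the fundamental weights indexed by $\Delta \setminus \theta$. Using Tits' existence theorem for irreducible real representations of real reductive groups (possibly after replacing $\lambda$ by a suitable positive integer multiple, or passing to $V \otimes V^*$ to ensure a real model), select an irreducible real representation $\tau : G \rightarrow \PGL(V)$ of restricted highest weight $\lambda$. Let $\ell \subset V$ be the highest restricted weight line and $H \subset V$ be the direct sum of all restricted weight spaces of $\tau$ other than the lowest one $V_{w_0\lambda}$, where $w_0$ is the longest element of $W$. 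Because $\Stab_W(\lambda) = W_\theta$, both $V_\lambda$ and $V_{w_0 \lambda}$ are one-dimensional, so $H$ is genuinely a hyperplane and $\ell + H = V$, giving (1).

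For (2), I would analyze the Lie algebra of the stabilizer of $\ell$. Decomposing $\gL = \gL_0 \oplus \bigoplus_{\alpha \in \Sigma} \gL_\alpha$, the stabilizer of $\ell$ contains every $\gL_\alpha$ with $\alpha \in \Sigma^+$, the centralizer of the dual element $H_\lambda \in \aL$ in $\gL_0$, and precisely those $\gL_{-\alpha}$ for which $\alpha(H_\lambda) = 0$, i.e., $\alpha \in \Spanset(\theta)$. By the choice of $\lambda$, this Lie algebra equals $\pL$, and since parabolic subgroups are their own normalisers (Proposition 2.17.25 in~\cite{E1996}) the stabilizer is exactly $P$. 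Applying the same argument to the dual representation $\tau^*$, whose highest weight line corresponds to the annihilator of $H$ and whose $P$ is replaced by the parabolic opposite to $P$, identifies the stabilizer of $H$ with $Q$.

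For (3), by Corollary~\ref{cor:para_1}, $gPg^{-1}$ and $hQh^{-1}$ are opposite if and only if $g^{-1}h \in PQ$, and the target condition (interpreting the statement as $\tau(g)\ell$ and $\tau(h)H$ transverse) reduces via $G$-equivariance to $\tau(g^{-1}h)\ell \not\subset H$. Because $H$ is $Q$-stable and $\ell$ is $P$-stable, the set $\{k \in G : \tau(k)\ell \subset H\}$ is a union of $(Q,P)$-double cosets, and by Bruhat decomposition (together with the fact that $PQ$ is the unique open double coset, cf.~\cite[Proposition 1.2.4.10]{W1972}) it coincides precisely with $G \setminus PQ$. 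This gives the required equivalence. The main obstacle is producing $\tau$ over $\Rb$ with the correct stabilizer: this is exactly the content of the existence lemma of Gu{\'e}ritaud--Guichard--Kassel--Wienhard~\cite[Lemma 4.5, Proposition 4.6]{GGKW2015b}, which uses Tits' classification of irreducible representations of real reductive groups and a divisibility argument on $\lambda$; everything else is a formal consequence of root-theoretic bookkeeping.
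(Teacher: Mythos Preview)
The paper does not actually prove this theorem: it is stated with the attribution ``Using the theory of irreducible representations of reductive groups Gu{\'e}ritaud et al.~\cite[Lemma 4.5, Proposition 4.6]{GGKW2015b} proved the following theorem,'' and no argument is given. Your proposal is therefore not competing with a proof in the paper but rather sketching the content of the cited reference, and your outline---choosing a restricted dominant weight $\lambda$ orthogonal exactly to $\theta$, invoking Tits' existence theorem to produce a real irreducible representation of that highest weight, taking $\ell$ to be the highest weight line and $H$ the complement of the lowest weight line, and verifying (3) via the Bruhat decomposition together with Corollary~\ref{cor:para_1}---is precisely the standard argument and matches what is in~\cite{GGKW2015b}.

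Two minor remarks. First, you correctly spotted and silently repaired the typo in part (3) of the statement (the second $\tau(g)H$ should read $\tau(h)H$). Second, in your verification of (3) you assert that $\{k \in G : \tau(k)\ell \subset H\}$ equals $G \setminus QP$; the inclusion $QP \subset \{k : \tau(k)\ell \not\subset H\}$ is immediate, but for the reverse you should say explicitly that for a Weyl representative $w$ one has $\tau(w)\ell = V_{w\lambda}$, and this lies in $H$ unless $w\lambda$ is the lowest weight $w_0\lambda$, which by the choice $\Stab_W(\lambda)=W_\theta$ happens exactly on the double coset $QP$. With that one line added, the sketch is complete.
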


Now fix $P,Q \leq G$ opposite parabolic subgroups. Let $\tau: G \rightarrow \PGL(V)$ be an irreducible representation, $\ell \subset V$ a line, and $H \subset V$ a hyperplane as in Theorem~\ref{thm:repn}. Fix some $x_0 \in V$ such that $\Rb x_0 = \ell$ and fix some $f_0 \in V^*$ a functional with $\ker f_0 = H$. Consider the dual representation $\tau^*:G \rightarrow \PGL(V^*)$. Now define the maps $\iota: G/P \rightarrow \Pb(V)$ and $\iota^*: G/Q \rightarrow \Pb(V^*)$ by
\begin{align*}
\iota(gP) = [\tau(g)x_0] \text{ and }\iota^*(gQ) = [\tau^*(g)f_0].
\end{align*}

It will be helpful to observe the following:

\begin{lemma}\label{lem:spanning}
With the notation above, if $\Oc \subset G/P$ is any open set, then there exists $x_1, \dots, x_D \in \Oc$ such that 
\begin{align*}
\iota(x_1) \oplus \dots \oplus \iota(x_D) = V.
\end{align*}
Analogously, if $\Oc \subset G/Q$ is any open set, then there exists $\xi_1, \dots, \xi_D \in \Oc$ such that 
\begin{align*}
\iota^*(\xi_1) \oplus \dots \oplus \iota^*(\xi_D) = V^*.
\end{align*}
\end{lemma}

\begin{proof}
We can identify $V$ with $\Rb^D$. Consider the map $\Phi: G^D \rightarrow \wedge^D \Rb^D$ given by
\begin{align*}
\Phi(g_1, \dots, g_D)=  (\tau(g_1) x_0) \wedge ( \tau(g_2) x_0 ) \wedge \dots  \wedge ( \tau(g_D) x_0 ).
\end{align*}
Since $\tau$ is an irreducible representation there exists $(g_1, \dots, g_D) \in G^D$ such that $\Phi(g_1, \dots, g_D) \neq 0$. Then since $\Phi$ is real analytic for any open set $\Oc \subset G^D$ there exists $(h_1, \dots, h_D) \in \Oc$ such that $\Phi(h_1, \dots, h_D) \neq 0$. This implies the first assertion of the Lemma. The second assertion has the exact same proof.
\end{proof}

\begin{lemma}\label{lem:embedding} The maps $\iota: G/P \rightarrow \Pb(V)$ and $\iota^*:G/Q \rightarrow \Pb(V^*)$ are embeddings. \end{lemma}

\begin{proof} The maps are smooth, injective, and have constant rank (due to the $G$ action). Further $G/P$ and $G/Q$ are compact, so the maps must be embeddings. \end{proof}

\subsection{Affine charts}\label{subsec:affine_chart}

\begin{definition}
Suppose that $P \leq G$ is a parabolic subgroup. Then a non-empty subset $\Ab \subset G/P$ is called an \emph{affine chart} if there exists a parabolic subgroup $Q \leq G$ such that
\begin{align*}
\Ab = \{ x=gP \in G/P : gPg^{-1} \text{ is opposite to } Q \}.
\end{align*}
\end{definition}

Let $P,Q \leq G$ be opposite parabolic subgroups. Let $N \leq Q$ be the unipotent radical of $Q$, then $QP=NP$ by Proposition 2.17.5 and 2.17.13 in~\cite{E1996}. Now suppose $\Ab \subset G/P$ is an affine chart and 
\begin{align*}
\Ab = \{ x=gP \in G/P : gPg^{-1} \text{ is opposite to } Q^\prime \}
\end{align*}
for some parabolic subgroup $Q^\prime \leq G$. Since $\Ab$ is non-empty, $g_0P \in \Ab$ for some $g_0 \in G$. So $P$ is opposite to $g_0^{-1} Q^\prime g_0$. Now by Theorem~\ref{thm:para_1}, $g_0^{-1}Q^\prime g_0= p Q p^{-1}$ for some $p \in P$. Then 
\begin{align*}
\Ab &= (g_op)^{-1} \{ gP \in G/P : gPg^{-1} \text{ is opposite to } Q \} \\
& = (g_op)^{-1} \{ gP \in G/P : g \in QP \} \\
& = (g_0p)^{-1} NP.
\end{align*}
In particular, an affine chart in $G/P$ is a translate of the big Bruhat cell.

\section{A Carath{\'e}odory type metric}\label{sec:cara}

Suppose $G$ is a non-compact connected simple Lie group with trivial center and $P \leq G$ is a parabolic subgroup. Fix a parabolic subgroup $Q \leq G$ opposite to $P$. 

Given a set $\Omega \subset G/P$ define
\begin{align*}
\Omega^* := \{ \xi \in G/Q:  \xi \text{ is opposite to every } x \in \Omega \}.
\end{align*}
Also given $\xi \in G/Q$ define (as before)
\begin{align*}
Z_\xi := \{ x \in G/P : x \text{ is not opposite to } \xi \}.
\end{align*}
Then $\xi \in \Omega^*$ if and only if $Z_\xi \cap \Omega = \emptyset$. 

\begin{lemma}\label{lem:dual} \ \begin{enumerate}
\item If $\Omega$ is open, then $\Omega^*$ is compact.
\item $\Omega$ is bounded in an affine chart if and only $\Omega^*$ has non-empty interior. 
\end{enumerate} 
\end{lemma}

\begin{proof}
Suppose that $\Omega$ is open and $\xi_n$ is a sequence in $\Omega^*$ converging to some $\xi \in G/Q$. Suppose for a contradiction that $\xi \notin \Omega^*$. Then there is some $x \in \Omega$ such that $x$ is not opposite to $\xi$. Since $\xi_n \rightarrow \xi$, we can find $g_n \in G$ converging to $\id \in G$ such that $g_n \xi = \xi_n$. Then $\xi_n$ is not opposite to $g_n x$. But since $g_n \rightarrow \id$ and $\Omega$ is open, we see that $g_n x \in \Omega$ for large $n$. So we have a contradiction. 

Now suppose that $\Omega$ is bounded in some affine chart $\Ab \subset G/P$. Then $\Ab = G/P - Z_\eta$ for some $\eta \in G/Q$. Since $\Omega$ is bounded in $\Ab$, 
\begin{align*}
\overline{\Omega} \cap Z_\eta = \emptyset.
\end{align*}
Suppose for a contradiction that $\eta$ is not in the interior of $\Omega^*$. Then there exists $\eta_n \in G/Q$ such that $\eta_n \rightarrow \eta$ and $x_n \in \Omega$ such that $x_n$ is not opposite to $\eta_n$. By passing to a subsequence we can suppose that $x_n \rightarrow x \in \overline{\Omega}$. But then, by Corollary~\ref{cor:opposite_open_dense}, we see that $x$ is not opposite to $\eta$ which is a contradiction.  

Now suppose that $\eta$ is in the interior of $\Omega^*$. Since $\eta \in\Omega^*$ we see that $\Omega$ is contained in the affine chart $\Ab = G/P - Z_\eta$. Now fix an neighborhood $U$ of the identity in $G$ such that $U \eta \subset \Omega^*$. Then 
\begin{align*}
\Omega \cap ( U \cdot Z_{\eta} ) = \emptyset
\end{align*}
and $U \cdot Z_\eta$ is a neighborhood of $Z_\eta$. Thus $\Omega$ is bounded in $\Ab$. 
\end{proof}

Now define maps $\iota: G/P \rightarrow \Pb(V)$ and $\iota^*: G/Q \rightarrow \Pb(V^*)$ as in the discussion following Theorem~\ref{thm:repn}.

Then for a proper domain  $\Omega \subset G/P$ define the function 
\begin{align*}
C_\Omega:\Omega \times \Omega \rightarrow \Rb_{\geq 0}
\end{align*}
by 
\begin{align*}
C_\Omega(x,y) = \sup_{\xi, \eta \in \Omega^*}\log \abs{ \frac{ \iota^*(\xi)\Big(\iota(x)\Big)\iota^*(\eta)\Big(\iota(y)\Big)}{\iota^*(\xi)\Big(\iota(y)\Big)\iota^*(\eta)\Big(\iota(x)\Big)}}.
\end{align*}

\begin{theorem}\label{thm:cara_metric}
Suppose $\Omega \subset G/P$ is a proper domain. Then $C_{\Omega}$ is a $\Aut(\Omega)$-invariant metric on $\Omega$ which generates the standard topology. 
\end{theorem}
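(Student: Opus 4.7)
The plan is to verify, in order: the pseudo-metric axioms, $\Aut(\Omega)$-invariance, point separation, and the topology statement. Write $F_{\xi,\eta}(x,y)$ for the cross-ratio inside the absolute value in the definition of $C_\Omega$, so that $C_\Omega(x,y) = \sup_{\xi,\eta\in\Omega^*}\tfrac{1}{2}\log|F_{\xi,\eta}(x,y)|$. Throughout, I rely on the $G$-equivariant maps $\iota\colon G/P\hookrightarrow \Pb(V)$ and $\iota^*\colon G/Q\hookrightarrow \Pb(V^*)$ from Theorem~\ref{thm:repn}, the fact that $\Omega^*$ is compact with non-empty interior (Observation~\ref{obs:dual}), and the spanning property that $\iota^*$-images of non-empty open subsets of $G/Q$ span $V^*$ (Observation~\ref{obs:spanning}).

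The pseudo-metric axioms are routine. Non-negativity and $C_\Omega(x,x)=0$ come from taking $\xi=\eta$, which makes $F_{\xi,\eta}\equiv 1$ on the diagonal. Symmetry uses the identity $F_{\eta,\xi}(y,x)=F_{\xi,\eta}(x,y)$. The triangle inequality follows from the factorisation
\[
F_{\xi,\eta}(x,z) = F_{\xi,\eta}(x,y)\cdot F_{\xi,\eta}(y,z)
\]
by applying $\tfrac{1}{2}\log|\cdot|$, bounding each factor in absolute value by the appropriate $C_\Omega$, and taking the sup in $(\xi,\eta)$. Finiteness of $C_\Omega$, and in fact its continuity on $\Omega\times\Omega$, follows from compactness of $\Omega^*\times\Omega^*$ and joint continuity of the integrand, using that the denominators $\iota^*(\xi)(\iota(y))$ do not vanish on $\Omega^*\times\Omega$. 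For $\Aut(\Omega)$-invariance, any $\varphi\in \Aut(\Omega)\subset G$ permutes $\Omega^*$ (because $\varphi Z_\xi=Z_{\varphi\xi}$), and the natural pairing $(\tau^*(\varphi)f)(\tau(\varphi)v)=f(v)$ gives $F_{\varphi\xi,\varphi\eta}(\varphi x,\varphi y)=F_{\xi,\eta}(x,y)$, so the supremum is preserved.

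For separation, suppose $C_\Omega(x,y)=0$ and fix representatives $v_x\in\iota(x)$, $v_y\in\iota(y)$ in $V$. Set $g(\xi):=\iota^*(\xi)(v_y)/\iota^*(\xi)(v_x)$; this is real-analytic on $\{\xi:\iota^*(\xi)(v_x)\neq 0\}\supset\Omega^*$. The vanishing of $C_\Omega$ forces $|g|$ to be constant on $\Omega^*$, say $|g|\equiv c$; necessarily $c>0$, for otherwise $\iota^*(\xi)(v_y)$ would vanish on the non-empty open set $\interior(\Omega^*)$ and Observation~\ref{obs:spanning} would give $v_y=0$. Pick a connected open ball $B\subset \interior(\Omega^*)$. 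Continuity of $g$ together with the discreteness of $\{\pm c\}$ forces $g\equiv +c$ or $g\equiv -c$ on $B$, so the linear form $v_y\mp cv_x$ is annihilated by $\iota^*(\xi)$ for every $\xi\in B$. Observation~\ref{obs:spanning} applied to $B$ then produces a spanning family in $V^*$ annihilating $v_y\mp cv_x$, whence $v_y=\pm cv_x$ and $\iota(x)=\iota(y)$. Injectivity of $\iota$ (which follows from the stabiliser statement in Theorem~\ref{thm:repn}) gives $x=y$.

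For the topology statement, continuity of $C_\Omega$ gives the inclusion $\mathcal{T}_{C_\Omega}\subseteq \mathcal{T}_{\mathrm{std}}$. Conversely, suppose $y_n\in\Omega$ with $C_\Omega(x,y_n)\to 0$; by compactness of $G/P$ extract a subsequence with $y_n\to y^*\in\overline{\Omega}$. If $y^*\in\Omega$, continuity and the preceding separation step give $y^*=x$. If $y^*\in\partial\Omega$, use $|\log|F_{\xi,\eta}(x,y_n)||\le 2\,C_\Omega(x,y_n)\to 0$ and distinguish two cases: either some $\xi_0\in\Omega^*$ satisfies $\iota^*(\xi_0)(\iota(y^*))=0$, in which case Observation~\ref{obs:spanning} provides $\eta_0\in\Omega^*$ with $\iota^*(\eta_0)(\iota(y^*))\neq 0$ and forces $|F_{\xi_0,\eta_0}(x,y_n)|\to\infty$, a contradiction; or else $\iota^*(\cdot)(\iota(y^*))$ is nowhere vanishing on $\Omega^*$, in which case passing to the limit gives $|F_{\xi,\eta}(x,y^*)|=1$ for all $\xi,\eta\in\Omega^*$ and the separation argument with $v_{y^*}$ replacing $v_y$ forces $y^*=x$, contradicting $y^*\in\partial\Omega$. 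The principal obstacle is precisely this limit/separation step: upgrading the constant-modulus condition on $g$ to genuine linear proportionality of $v_x$ and $v_y$ requires the connectedness of a ball inside $\interior(\Omega^*)$ to eliminate a $\pm$ sign ambiguity and Observation~\ref{obs:spanning} to promote the resulting local linear relation to a global one.
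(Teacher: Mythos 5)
Your proof is correct. The routine parts (triangle inequality via the multiplicative identity $F_{\xi,\eta}(x,z)=F_{\xi,\eta}(x,y)F_{\xi,\eta}(y,z)$, invariance via equivariance of $\iota,\iota^*$ and the duality pairing, finiteness and continuity from compactness of $\Omega^*$) coincide with the paper's. Where you genuinely diverge is in the two substantive steps. For separation, the paper argues directly: by Observation~\ref{obs:spanning} it extracts a basis $f_1,\dots,f_D$ of $V^*$ with each $[f_i]\in\iota^*(\Omega^*)$, writes $\iota(x),\iota(y)$ in the dual basis (all coordinates nonzero since the $[f_i]$ lie in $\Omega^*$), and notes that non-proportionality produces indices $i,j$ with $x_iy_j/(x_jy_i)\neq 1$, so the single pair $([f_i],[f_j])$ already witnesses $C_\Omega(x,y)>0$. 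You argue by contrapositive: $C_\Omega(x,y)=0$ forces $|g|$ constant on $\Omega^*$, and you need connectedness of a ball in $\interior(\Omega^*)$ to pin down the sign before Observation~\ref{obs:spanning} gives $v_y=\pm c\,v_x$. Both are valid; the basis trick is shorter precisely because it never meets the $\pm$ ambiguity you have to dispatch. Similarly for the topology: the paper stays in dual-basis coordinates and gets the explicit lower bound $C_\Omega(x,y)\geq\max_i\tfrac12\log\abs{x_i/y_i}$, yielding an explicit $\delta$ for each standard neighbourhood, whereas you use sequential compactness of $G/P$ and a three-way case split on the limit point (interior; boundary and annihilated by some $\xi_0\in\Omega^*$, forcing blow-up; boundary but opposite to all of $\Omega^*$, where your separation argument still applies). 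That last case is not vacuous --- it occurs exactly when $\Omega$ fails to be dual convex --- and your handling of it essentially anticipates the completeness argument of Theorem~\ref{thm:completeness}. The paper's route buys economy and explicit estimates; yours is equally sound and, as a bonus, makes visible why completeness of $C_\Omega$ is a separate issue from its being a metric.
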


\begin{proof}[Proof of Theorem~\ref{thm:cara_metric}]
Since $\Aut(\Omega)$ preserves $\Omega^*$ we see that $C_\Omega$ is $\Aut(\Omega)$-invariant. 

Fix $x,y \in \Omega$ distinct. We will show that $C_{\Omega}(x,y) > 0$. Since $\Omega^* \subset G/Q$ has non-empty interior, Lemma~\ref{lem:spanning} implies that the set $\iota^*(\Omega^*) \subset \Pb(V^*)$ contains a basis of $V^*$. So we can find a basis $f_1, \dots, f_D \in V^*$ such that $[f_1], \dots, [f_D] \in \iota^*(\Omega^*)$. Now let $e_1, \dots, e_d$ be the basis of $V$ dual to $f_1, \dots, f_D$, that is 
\begin{align*}
f_j(e_i) = \left\{ \begin{array}{ll} 
1 & \text{ if } i =j \\
0 & \text{ otherwise.} 
\end{array} \right.
\end{align*}
Then $\iota(x) = [\sum x_i e_i]$ and $\iota(y) = [\sum y_i e_i]$ for some $x_i, y_i \in \Rb$. Since $\iota: G/P \rightarrow \Pb(V)$ is injective there exists $i,j$ such that $x_i / x_j \neq y_i / y_j$. Then 
\begin{align*}
C_\Omega(x,y) \geq \abs{ \log \abs{ \frac{f_i(\iota(x) )f_j(\iota(y) )}{f_i(\iota(y) )f_j(\iota(x) )} }}= \abs{ \log \abs{ \frac{x_i y_j}{x_j y_i}} }> 0.
\end{align*}

Next fix $x,y,z \in \Omega$. Since $\Omega^*$ is compact there exists $\xi, \eta \in \Omega^*$ such that 
\begin{align*}
C_{\Omega}(x,y) 
&= \log \abs{ \frac{ \iota^*(\xi)\Big(\iota(x)\Big)\iota^*(\eta)\Big(\iota(y)\Big)}{\iota^*(\xi)\Big(\iota(y)\Big)\iota^*(\eta)\Big(\iota(x)\Big)}}.
\end{align*}
Then
\begin{align*}
C_{\Omega}(x,y) 
& =   \log \abs{ \frac{ \iota^*(\xi)\Big(\iota(x)\Big)\iota^*(\eta)\Big(\iota(z)\Big)}{\iota^*(\xi)\Big(\iota(z)\Big)\iota^*(\eta)\Big(\iota(x)\Big)}} +  \log \abs{ \frac{ \iota^*(\xi)\Big(\iota(z)\Big)\iota^*(\eta)\Big(\iota(y)\Big)}{\iota^*(\xi)\Big(\iota(y)\Big)\iota^*(\eta)\Big(\iota(z)\Big)}}. \\
& \leq C_{\Omega}(x,z) + C_{\Omega}(z,y).
\end{align*}
So $C_{\Omega}$ satisfies the triangle inequality and hence is a metric.

It remains to show that $C_\Omega$ generates the standard topology. Since $\Omega^*$ is compact, $C_\Omega$ is continuous with respect to the standard topology on $\Omega$. Thus to show that $C_{\Omega}$ generates the standard topology it is enough to show: for any $x_0 \in \Omega$ and $U \subset \Omega$ an open neighborhood of $x_0$ there exists $\delta > 0$ such that 
\begin{align*}
\{ y \in \Omega : C_{\Omega}(x_0,y) < \delta \} \subset U.
\end{align*}
As before fix a basis $f_1, \dots, f_D \in V^*$ such that $[f_1], \dots, [f_D] \in \iota^*(\Omega^*)$. Then let $e_1, \dots, e_D$ be the basis of $V$ dual to $f_1, \dots, f_D$. Since $[f_1] \in \Omega^*$, $\iota(x_0)$ has non-zero $e_1$ component. Then 
\begin{align*}
\iota(x_0) = \left[ e_1 + \sum_{i = 2}^D x_i e_i\right]
\end{align*}
for some $x_2, \dots, x_D \in \Rb$. Since $[f_i] \in \Omega^*$, $x_i \neq 0$. Then since $U$ is open and $\iota : G/P \rightarrow \Pb(V)$ is an embedding, see Lemma~\ref{lem:embedding}, there exists $\epsilon > 0$ such that
\begin{align*}
\left\{ \left[ e_1 + \sum_{i =2}^D y_i e_i\right]  : e^{-\epsilon} < \abs{x_i / y_i} < e^{\epsilon} \right\} \cap \iota(G/P) \subset \iota(U).
\end{align*}
Moreover, if $\iota(y) = \left[ e_1 + \sum_{i = 2}^D y_i e_i\right]$ then
\begin{align*}
C_\Omega(x,y) \geq \max_{2 \leq i \leq D}  \abs{\log \abs{\frac{x_i}{y_i}}}.
\end{align*}
So if $\delta = \epsilon$ we see that 
\begin{align*}
\{ y \in \Omega : C_{\Omega}(x_0,y) < \delta \} \subset U.
\end{align*}
Thus $C_\Omega$ generates the standard topology. 
\end{proof}

The existence of an invariant metric implies the following:

\begin{corollary}\label{cor:proper}
Suppose $\Omega \subset G/P$ is a proper domain. Then the action of $\Aut(\Omega)$ on $\Omega$ is proper. 
\end{corollary}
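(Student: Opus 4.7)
My plan is to combine the $\Aut(\Omega)$-invariance of the metric $C_\Omega$ from Theorem~\ref{thm:cara_metric} with the closedness of $\Aut(\Omega) \leq G$ from Proposition~\ref{prop:closed} and the standard attractor-repeller dynamics for divergent sequences in $G$ acting on a flag manifold. Fix a compact set $K \subset \Omega$ and let $S = \{g \in \Aut(\Omega) : gK \cap K \neq \emptyset\}$. Since $\Aut(\Omega)$ is closed in $G$, it suffices to show that $S$ is relatively compact in $G$.

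Suppose for contradiction that $g_n \in S$ has no subsequence converging in $G$. I would invoke the Cartan (KAK) decomposition $g_n = k_n a_n \ell_n^{-1}$, with $k_n, \ell_n$ in a fixed maximal compact subgroup of $G$ and $a_n$ in the closed positive Weyl chamber; after passing to subsequences one may arrange $k_n \to k$, $\ell_n \to \ell$, while $a_n$ escapes to infinity along a fixed ray (otherwise $g_n$ would already be relatively compact). Classical dynamics of such sequences then produce a point $\xi^+ \in G/P$ and a proper closed Schubert subvariety $Z \subsetneq G/P$ such that $g_n z \to \xi^+$ uniformly on compact subsets of $G/P \setminus Z$.

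The heart of the argument is then to use $C_\Omega$-invariance to contradict this contraction. Since $Z$ has empty interior and $\Omega$ is open, choose two distinct points $p, q \in \Omega \setminus Z$; then $g_n p, g_n q \in \Omega$ both converge to $\xi^+$. Next, pick $x_n \in K$ with $y_n := g_n x_n \in K$, and after a further subsequence suppose $x_n \to x$ and $y_n \to y$ in $K$. If $x \notin Z$, uniform convergence forces $\xi^+ = y \in K \subset \Omega$, and then continuity of $C_\Omega$ on $\Omega \times \Omega$ yields $C_\Omega(g_n p, g_n q) \to C_\Omega(\xi^+, \xi^+) = 0$, contradicting the invariance identity $C_\Omega(g_n p, g_n q) = C_\Omega(p, q) > 0$. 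If $x \in Z$, I would apply the same analysis to the inverse sequence $g_n^{-1}$, whose attractor $\xi^-$ and repeller $Z^-$ come from the opposite Weyl chamber data, to obtain $y \in Z^-$; refining the choice of $p, q$ to lie in the open set $\Omega \setminus (Z \cup Z^-)$ then closes the argument symmetrically.

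The main obstacle will be the dynamical bookkeeping in the $x \in Z$ case, where the attractor-repeller analysis must be carried out simultaneously on $G/P$ (for $g_n$) and on the associated flag manifold (for $g_n^{-1}$), and one must verify that the constraints $x \in Z$ and $y \in Z^-$ are genuinely incompatible with preservation of a nontrivial $C_\Omega$-distance on the large open set $\Omega \setminus (Z \cup Z^-)$. The underlying principle throughout is simple: a $C_\Omega$-isometric action cannot simultaneously contract two distinct orbits to a common point of $\Omega$, for that would collapse a positive invariant distance to zero, contradicting Theorem~\ref{thm:cara_metric}.
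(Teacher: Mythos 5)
Your guiding principle---that an isometric action for $C_\Omega$ cannot collapse a positive invariant distance---is the right one, and in rank one (or for sequences whose Cartan projections diverge away from all walls) your argument can be made to work. But as written there are two genuine gaps. First, the dynamical input you invoke is false in general: for a divergent sequence $g_n = k_n a_n \ell_n^{-1}$ in a higher-rank $G$, the Cartan projection $\log a_n$ may go to infinity while remaining at bounded distance from a wall of the Weyl chamber, and then $g_n$ does \emph{not} converge to a constant map $\xi^+$ on $G/P\setminus Z$; it converges locally uniformly to a \emph{non-constant} map onto a proper subvariety. For example $a_n=\mathrm{diag}(\lambda_n,\lambda_n,\lambda_n^{-2})$ in $\PGL_3(\Rb)$ acting on $\Pb(\Rb^3)$ converges, away from $[e_3]$, to the projection onto $\mathrm{span}(e_1,e_2)$. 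In that situation two distinct points $p,q$ need not have the same limit and your contradiction evaporates. Second, even granting single-point dynamics, the residual case $x\in Z$ and $y\in Z^-$---which you yourself flag as the main obstacle---is not resolved. There both $\xi^+$ and $\xi^-$ may a priori lie on $\partial\Omega$, and a divergent sequence of isometries can perfectly well push every pair of interior points toward a common boundary point while preserving their mutual distance (this is exactly what hyperbolic isometries do), so invariance of $C_\Omega$ yields no contradiction; nor can you argue that distances blow up near $\partial\Omega$, since completeness of $C_\Omega$ requires dual convexity (Theorem~\ref{thm:completeness}) and is not available for a general proper domain.

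The paper's proof sidesteps both problems by never reducing to point dynamics. It uses $C_\Omega$-invariance and an Arzel\`a--Ascoli argument to extract a limit $f=\lim\varphi_n|_{K_2}$, which is an isometry and hence a homeomorphism onto its image; separately it takes norm-one representatives $T_n$ of $\tau(\varphi_n)$ with a limit $T\in\End(V)$, so that $\iota\circ f=T\circ\iota$ off the preimage of $\ker T$. Since $f(K_2\cap(\Omega\setminus Y))$ has nonempty interior and, by Observation~\ref{obs:spanning}, the $\iota$-image of any nonempty open subset of $G/P$ spans $V$, the image of $T$ spans $V$, so $T\in\GL(V)$; properness of $\tau$ then makes $\{\varphi_n\}$ relatively compact, and closedness of $\Aut(\Omega)$ finishes. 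To salvage your approach you would need to replace ``the $g_n$ contract to a point'' by ``the limit $T$ in $\Pb(\End(V))$ is singular'' and derive the contradiction from injectivity and openness of the limit isometry rather than from the collapse of a single pair---at which point you have reconstructed the paper's argument.
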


The proof of Corollary~\ref{cor:proper} is essentially the proof of~\cite[Proposition 4.4]{Z2015b} taken verbatim. 

\begin{proof}
This argument requires some care because $C_{\Omega}$ may not be a complete metric (see Theorem~\ref{thm:completeness}). Fix a compact set 
$K \subset \Omega$, we claim that
\begin{align*}
\{ \varphi \in \Aut(\Omega) : \varphi K \cap K \neq \emptyset \} \subset \Aut(\Omega)
\end{align*}
is compact. So suppose that $\varphi_n k_n \in K$ for some sequences $\varphi_n \in \Aut(\Omega)$ and $k_n \in K$. By passing to a 
subsequence we can suppose that $k_n \rightarrow k_\infty \in K$. Now since $C_{\Omega}$ is a locally compact metric (it generates the standard topology) 
and $K \subset \Omega$ is compact there exists some $\delta > 0$ such that the set 
\begin{align*}
K_1 = \{ q \in \Omega: C_{\Omega}(K,q) \leq 2\delta\}
\end{align*}
is compact. Next let 
\begin{align*}
K_2 = \{ q \in \Omega : C_{\Omega}(k_\infty,q) \leq \delta\}.
\end{align*}
Then for large $n$ we have $\varphi_n(K_2) \subset K_1$. Since $\varphi_n$ preserves the metric $C_{\Omega}$ and $(K_1, C_{\Omega}|_{K_1})$ is a complete metric space we can pass to a subsequence and assume that $\varphi_n|_{K_2}$ converges uniformly to a function $f:K_2 \rightarrow K_1$. Moreover
\begin{align*}
C_{\Omega}(f(p_1), f(p_2)) = \lim_{n \rightarrow \infty} C_\Omega(\varphi_n p_1, \varphi_n p_2) = C_{\Omega}(p_1,p_2)
\end{align*}
for all $p_1, p_2 \in K_2$. Since $C_\Omega$ is a metric generating the standard topology on $\Omega$, we see that $f$ induces a homeomorphism $K_2 \rightarrow f(K_2)$. 

Since $G$ is a simple Lie group with trivial center, the map $\tau: G \rightarrow \PGL(V)$ is proper. Now fix a norm on $V$ and an associated operator norm on $\GL(V)$. Next let $T_n \in \GL(V)$ be a representative of $\tau(\varphi_n)$ with $\norm{T_n}=1$. Then pass to a subsequence such that $T_n \rightarrow T$ in $\End(V)$. Now let $Y = (\iota)^{-1}(\ker T)$. Notice: if $x \in K_2 \cap (\Omega \setminus Y)$ then 
\begin{align*}
\iota(f(x)) = \lim_{n \rightarrow \infty} \iota( \varphi_n x) = \lim_{n \rightarrow \infty} \tau(\varphi_n)\iota( x) =T(\iota(x)).
\end{align*}
Now by Lemma~\ref{lem:spanning}, $\Omega \setminus Y$ is open and dense in $\Omega$. So $K_2 \cap (\Omega \setminus Y)$ has non-empty interior, so 
\begin{align*}
f \left( K_2 \cap (\Omega \setminus Y) \right) 
\end{align*} 
has non-empty interior. But then by Lemma~\ref{lem:spanning}, the image of $T$ contains a spanning set of $V$. Thus $T \in \GL(V)$. Thus, since $\tau$ is a proper map, we can pass to a subsequence such that $\varphi_n$ converges to some  $\varphi$ in $G$. Then since $\Aut(\Omega)$ is closed we see that $\varphi \in \Aut(\Omega)$. 

Thus the set 
\begin{align*}
\{ \varphi \in \Aut(\Omega) : \varphi K \cap K \neq \emptyset \} \subset \Aut(\Omega)
\end{align*}
is compact. So $\Aut(\Omega)$ acts properly on $\Omega$.
\end{proof}

\subsection{The Hilbert metric}\label{subsec:Hilbert} In this subsection we compare the metric $C_\Omega$ to the Hilbert metric. 

\begin{definition}
An open set $\Cc \subset \Pb(\Rb^d)$ is called \emph{properly convex} if for every projective line $\ell \subset \Pb(\Rb^d)$ the intersection $\ell \cap \Cc$ is connected and $\overline{\ell \cap \Cc} \neq \ell$. 
\end{definition}

Suppose $\Cc$ is properly convex. Given two points $x,y \in \Cc$ let $\ell_{xy}$ be a projective line containing $x$ and $y$. Then the \emph{Hilbert distance} between them is defined to be 
\begin{align*}
H_{\Cc}(x,y) = \log \frac{ \abs{y-a}\abs{x-b}}{\abs{x-a}\abs{y-b}}
\end{align*}
where $\{a,b\} = \partial \Cc \cap \ell_{xy}$ and we have the ordering $a,x,y,b$ along $\ell_{xy}$. Now define 
\begin{align*}
\Cc^{dual} = \{ [f] \in \Pb(\Rb^{d*}) :  f(x) \neq 0 \text{ for all } x \in \Cc  \}.
\end{align*}
By the supporting hyperplane definition of convexity, we see that every $a \in \partial \Cc$ is contained in the kernel of some $f \in \Cc^{dual}$. Then it is not hard to show that
\begin{align*}
H_{\Cc}(x,y) =  \sup_{f,g \in \Cc^{dual}} \log  \abs{ \frac{f(y)g(x)}{f(x)g(y)}}.
\end{align*}

Let $e_1, \dots, e_d$ be the standard basis of $\Rb^d$. Let $P \leq \PGL_d(\Rb)$ be the stabilizer of the line $\Rb e_1$ and $Q \leq \PGL_d(\Rb)$ the stabilizer of $\Rb e_2 + \dots + \Rb e_d$. Then $P,Q$ are opposite parabolic subgroups. We can take $\tau: \PGL_{d}(\Rb) \rightarrow \PGL_d(\Rb)$ to be the identity representation. Then $\iota$ identifies $G/P$ with $\Pb(\Rb^d)$ and $\iota^*$ identifies $G/Q$ with $\Pb(\Rb^{d*})$. 

Suppose $\Omega \subset G/P$ is a proper dual convex domain. Then $\iota(\Omega) \subset \Pb(\Rb^d)$ is a properly convex set. Moreover, 
\begin{align*}
\iota^*(\Omega^*) = \{ f \in \Pb(\Rb^*) : f(x) \neq 0 \text{ for all } x \in \iota(\Omega) \} = \iota(\Omega)^{dual}
\end{align*}
Thus
\begin{align*}
C_\Omega(x,y) = H_{\iota(\Omega)}(\iota(x), \iota(y))
\end{align*}
for all $x,y \in \Omega$. 

 \subsection{The Carath{\'e}odory and Kobayashi metric} For domains $\Oc \subset \Pb(\Rb^d)$ which are not convex, Kobayashi~\cite{K1977} constructed two invariant metrics using projective maps to and from the unit interval. Let
\begin{align*}
I : = \{ [1:x] : \abs{x} < 1\} \subset \Pb(\Rb^2).
\end{align*}
For two open sets $\Omega_1 \subset \Rb(\Rb^{d_1+1})$ and $\Omega_2 \subset \Pb(\Rb^{d_2+1})$ let $\Proj(\Omega_1, \Omega_2)$ be the space of maps $f:\Omega_1 \rightarrow \Omega_2$ such that $f = T|_{\Omega_1}$ for some $T \in \Pb(\Lin(\Rb^{d_1+1}, \Rb^{d_2+1}))$ with $\ker T \cap \Omega_1 = \emptyset$.

For a domain $\Oc \subset \Pb(\Rb^d)$ define the two quantities:
\begin{align*}
c_{\Oc}(x,y) = \sup \left\{ H_{I}(f(x),f(y)) :  f \in \Proj(\Omega, I)  \right\},
\end{align*}
and
\begin{align*}
L_{\Oc}(x,y) = \inf \left\{ H_I(u,w) : f \in \Proj(I,\Omega) \text{ with } f(u)=x \text{ and } f(w)=y\right\}.
\end{align*}
The function $c_{\Oc}$ always satisfies the triangle inequality, but $L_{\Oc}$ may not. So we introduce:
\begin{align*}
k_{\Oc}(x,y) = \inf \left\{ \sum_{i=0}^N L_{\Oc}(x_i, x_{i+1}) : N > 0, \  x_0, x_1, \dots, x_{N+1} \in \Oc, \ x = x_0, \ y = x_{N+1} \right\}.
\end{align*}

Kobayashi then proved:

\begin{proposition}\cite{K1977}
Suppose $\Oc \subset \Pb(\Rb^{d+1})$ is a proper domain. Then $c_{\Oc}$ and $k_{\Oc}$ are $\Aut(\Oc)$-invariant metrics on $\Oc$. Moreover, if $\Oc$ is convex, then $c_{\Oc} = k_{\Oc} = H_{\Oc}$. 
\end{proposition}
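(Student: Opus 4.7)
The plan is to treat the proposition in two stages: first the metric properties together with $\Aut(\Oc)$-invariance, and then the identification with $H_{\Oc}$ in the convex case.

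For $c_{\Oc}$, symmetry is immediate from the symmetry of $H_I$, and the triangle inequality comes from applying the triangle inequality of $H_I$ pointwise for each $f \in \Proj(\Oc, I)$ and then taking the supremum. For positivity, given distinct $x, y \in \Oc$, I would produce a separating $f \in \Proj(\Oc, I)$ as follows: since $\Oc$ is bounded in some affine chart $\Pb(\Rb^{d+1}) - Z$ with $Z$ a hyperplane, choose a generic codimension-two subspace $W \subset Z$ so that $x$ and $y$ are not collinear with $W$. The projection $\pi_W$ from $\Pb(\Rb^{d+1}) - W$ to a complementary projective line $L$ is defined on $\Oc$, separates $x$ from $y$, and sends $\overline{\Oc}$ to a compact subset of the affine chart $L - \pi_W(Z)$; a projective rescaling of $L$ then lands this image inside $I$, yielding the desired $f$. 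For $k_{\Oc}$, the chain construction automatically gives symmetry and the triangle inequality, so only positivity is at issue. I would establish $c_{\Oc} \leq k_{\Oc}$ by a Schwarz-type comparison: for any $f \in \Proj(I, \Oc)$ and $g \in \Proj(\Oc, I)$, the composition $g \circ f$ is an element of $\Proj(I, I)$, which is either an invertible projective automorphism of $\Pb(\Rb^2)$ sending $I$ into $I$ (hence $1$-Lipschitz for $H_I$) or else a rank-one map with one-dimensional image (hence constant on $I$); in either case $H_I(g(f(u)), g(f(w))) \leq H_I(u, w)$, which gives $c_{\Oc} \leq L_{\Oc}$ and, summing along chains using the triangle inequality for $c_{\Oc}$, also $c_{\Oc} \leq k_{\Oc}$, whence positivity of $k_{\Oc}$. $\Aut(\Oc)$-invariance of both metrics is immediate: post-composition by $\varphi \in \Aut(\Oc)$ is a bijection of $\Proj(\Oc, I)$, and pre-composition by $\varphi$ is a bijection of $\Proj(I, \Oc)$.

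For the convex case, I would exploit the dual-functional formula $H_{\Oc}(x, y) = \sup_{F, G \in \Oc^{dual}} \log |F(y) G(x) / F(x) G(y)|$ recalled in Subsection 5.1. To prove $c_{\Oc} \geq H_{\Oc}$: given $F, G \in \Oc^{dual}$, connectedness of $\Oc$ allows me to normalize $F, G > 0$ on $\Oc$, and then $f_1 := (F+G)/2$ and $f_2 := (F-G)/2$ satisfy $|f_2| < f_1$ on $\Oc$, so $T := [f_1 : f_2]$ lies in $\Proj(\Oc, I)$; the endpoints $[1 : \pm 1]$ of $I$ are cut out by the linear functionals $(a_1, a_2) \mapsto a_1 \mp a_2$, and a direct cross-ratio computation gives $H_I(T(x), T(y)) = \log |G(y) F(x) / G(x) F(y)|$. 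Taking the supremum over $F, G$ recovers $H_{\Oc}(x, y)$. Conversely, any $f = [f_1 : f_2] \in \Proj(\Oc, I)$ has (after normalizing $f_1 > 0$) both $f_1 \pm f_2 \in \Oc^{dual}$, since $|f_2/f_1| < 1$ on $\Oc$ forces $f_1 \pm f_2$ to be nonvanishing, and the same identity exhibits $H_I(f(x), f(y))$ as a term in the supremum defining $H_{\Oc}(x, y)$, giving $c_{\Oc} \leq H_{\Oc}$. Finally, for $k_{\Oc} \leq H_{\Oc}$: convexity of $\Oc$ places the projective segment from $x$ to $y$ inside $\Oc$, so parametrizing this segment by $I$ via a projective isomorphism that sends the endpoints of $I$ to the two intersections of the containing projective line with $\partial \Oc$ produces $f \in \Proj(I, \Oc)$ with $f(u) = x$, $f(w) = y$, and $H_I(u, w) = H_{\Oc}(x, y)$, hence $L_{\Oc}(x, y) \leq H_{\Oc}(x, y)$ and so $k_{\Oc} \leq H_{\Oc}$. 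Combined with $c_{\Oc} \leq k_{\Oc}$ and $c_{\Oc} = H_{\Oc}$, all three quantities coincide.

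The main obstacle is the positivity of $c_{\Oc}$: this is the only place where the hypothesis that $\Oc$ be bounded in an affine chart really enters, and it requires the explicit projection-from-a-hyperplane construction sketched above to produce a supply of projective maps into $I$. Every other step, including the Schwarz-type comparison $c_{\Oc} \leq k_{\Oc}$ and the cross-ratio identifications in the convex case, is essentially formal once this setup is in place.
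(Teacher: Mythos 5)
The paper does not prove this proposition --- it is quoted verbatim from Kobayashi's 1977 paper with only a citation --- so there is no in-text argument to compare against; your write-up is essentially Kobayashi's original line of reasoning and it is correct. The three main ingredients are all in order: the projection-from-a-codimension-two-subspace construction does produce separating elements of $\Proj(\Oc,I)$ (the key point being that the fiber of $\pi_W$ over $\pi_W(Z\setminus W)$ is exactly $Z\setminus W$, so $\pi_W(\overline{\Oc})$ lands in a compact subset of the affine line $L\setminus\pi_W(Z)$); the Schwarz-type inequality $c_{\Oc}\leq L_{\Oc}$ via the rank dichotomy for $g\circ f\in\Proj(I,I)$ is the standard mechanism forcing positivity of $k_{\Oc}$; and the change of variables $F=f_1+f_2$, $G=f_1-f_2$ is exactly the dictionary between $\Proj(\Oc,I)$ and ordered pairs in $\Oc^{dual}$ that identifies $c_{\Oc}$ with the dual-functional formula for $H_{\Oc}$, while the segment parametrization gives $L_{\Oc}\leq H_{\Oc}$. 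The one item you should not wave away is finiteness: a metric must take values in $[0,\infty)$, and neither $c_{\Oc}<\infty$ nor $k_{\Oc}<\infty$ is addressed. Both are easy --- $k_{\Oc}(x,y)<\infty$ follows from openness and connectedness of $\Oc$ (nearby points are joined by a closed projective segment inside $\Oc$, so $L_{\Oc}$ is finite locally and chains connect any two points), and then $c_{\Oc}\leq k_{\Oc}<\infty$ by your own comparison (or directly from compactness of the set of hyperplanes disjoint from $\Oc$) --- but the statement of the proposition requires it.
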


As in Subsection~\ref{subsec:Hilbert} let $G = \PGL_d(\Rb)$,  $P$ the stabilizer of $\Rb e_1$, $Q$ the stabilizer of $\Rb e_2 + \dots + \Rb e_d$, $\tau: \PGL_d(\Rb) \rightarrow \PGL_d(\Rb)$ the identity representation, and $\iota, \iota^*$ the induced maps. 

\begin{proposition}
With the notation above, suppose $\Omega \subset G/P$ is a proper domain. Then $C_\Omega(x,y) = c_{\iota(\Omega)}(\iota(x), \iota(y))$ for all $x,y \in \Omega$.
\end{proposition}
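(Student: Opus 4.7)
The plan is to turn both sides into the same supremum of a cross-ratio by exhibiting an explicit bijection between the parameter spaces over which the two suprema are taken.

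First, I would unpack the identifications of Subsection~\ref{subsec:Hilbert}: with $\tau$ the identity representation and $\iota, \iota^*$ the obvious maps, the set $\iota^*(\Omega^*) \subset \Pb(\Rb^{d*})$ is just $\iota(\Omega)^{dual}$ (a functional is in $\Omega^*$ iff it is nowhere zero on $\iota(\Omega)$). So the defining supremum for $C_\Omega(x,y)$ can be rewritten as a supremum over ordered pairs $([f],[g]) \in \iota(\Omega)^{dual} \times \iota(\Omega)^{dual}$ of $\tfrac{1}{2}\log\bigl|f(x)g(y)/(f(y)g(x))\bigr|$.

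Next, I would parametrize $\Proj(\iota(\Omega), I)$. Any $T$ in this space has the form $T = [A:B]$ for linear functionals $A, B$ on $\Rb^d$, and mapping into $I = \{[1:s]:|s|<1\}$ forces $A$ to be nowhere zero on $\iota(\Omega)$ and $|B/A|<1$ there. The key bijection is
\begin{align*}
T = [A:B] \longleftrightarrow (f,g) := (A-B,\, A+B).
\end{align*}
One direction: given $T$, the inequality $|B/A|<1$ forces $f$ and $g$ to have the same sign as $A$ on $\iota(\Omega)$, so $[f],[g] \in \iota(\Omega)^{dual}$. Conversely, given $[f],[g] \in \iota(\Omega)^{dual}$, pick representatives that are positive on the connected set $\iota(\Omega)$ and set $A := (f+g)/2$, $B := (g-f)/2$; then $A>0$ and $|B/A| = |g-f|/(f+g) < 1$, so $T := [A:B] \in \Proj(\iota(\Omega), I)$.

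The final step is to match the integrands. Writing $T(x) = [1:s_x]$ with $s_x = B(x)/A(x)$, a direct computation gives
\begin{align*}
\frac{(1-s_x)(1+s_y)}{(1+s_x)(1-s_y)} = \frac{A(x)-B(x)}{A(x)+B(x)} \cdot \frac{A(y)+B(y)}{A(y)-B(y)} = \frac{f(x)g(y)}{g(x)f(y)},
\end{align*}
so $H_I(T(x), T(y))$ is exactly the quantity inside the logarithm in the defining expression for $C_\Omega$ at the pair $(\xi,\eta) \leftrightarrow (f,g)$. Taking suprema on both sides yields the claimed equality.

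The one point needing care is the interplay of scalars: $T = [A:B]$ is determined only up to common rescaling of $(A,B)$, while the pair $([f],[g])$ carries two independent projective classes. A short check shows that the displayed cross-ratio is invariant under independent positive rescalings of $f$ and $g$, so the bijection respects the projective ambiguities on both sides and the two suprema genuinely pair up term by term.
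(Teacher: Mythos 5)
Your argument is correct and is essentially the paper's own proof: both set up the correspondence $T=[A:B]\leftrightarrow(A-B,A+B)$ between maps in $\Proj(\iota(\Omega),I)$ and pairs of functionals nowhere vanishing on $\iota(\Omega)$, and then match $H_I(T(x),T(y))$ with the cross-ratio $\bigl|f(x)g(y)/(f(y)g(x))\bigr|$; your write-up is in fact more explicit than the paper's ``it is straightforward to show.'' The only blemish is the last step's phrasing (\emph{``$H_I(T(x),T(y))$ is exactly the quantity inside the logarithm''} --- it is the logarithm of that quantity), which also surfaces the factor-of-$2$ mismatch between the $\tfrac12$ in the definition of $C_\Omega$ and the absence of a $\tfrac12$ in $H_I$; this normalization wrinkle is already present in the paper's statement and is not a defect of your approach.
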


\begin{proof}
View $\Omega$ as a subset of $\Pb(\Rb^{d})$ and $\Omega^*$ as a subset of $\Pb(\Rb^{d*})$. Then for $\xi_1, \xi_2 \in \Omega^*$ distinct there is a map $T \in \Pb(\Lin(\Rb^{d+1},\Rb^2))$ with $T^{-1}( [1:-1]) = \ker \xi_1$ and $T^{-1}([1:1])= \ker \xi_2$, and $T(\Omega) \subset I$. Then $f = T|_{\Omega} \in \Proj(\Omega, I)$ and it is straightforward to show that 
\begin{align*}
\abs{ \log  \abs{ \frac{\xi_1(x) \xi_2(y)}{\xi_1(y)\xi_2(x)}} } = H_{I}(f(x),f(y)).
\end{align*}
Conversely, if $f=T|_{\Omega} \in \Proj(\Omega, I)$ then $T^{-1}([1:-1]) = \ker \xi_1$ and $T^{-1}([1:1]) = \ker \xi_2$ for some $\xi_1, \xi_2 \in \Omega^*$. Then as before
\begin{equation*}
\abs{ \log  \abs{ \frac{\xi_1(x) \xi_2(y)}{\xi_1(y)\xi_2(x)}} } = H_{I}(f(x),f(y)). \qedhere
\end{equation*}
\end{proof}

This proposition shows that $C_\Omega$ can be seen as an analogue of the Carath{\'e}odory metric from several complex variables. 

For certain flag manifolds $G/P$ it is also possible to construct an invariant metric $K_\Omega$ using projective maps of $\Pb(\Rb^2)$ into $G/P$. See~\cite[Section 4]{vLZ2015} for the case when $G/P = \Gr_p(\Rb^{p+q})$. 

\section{Proof of Theorem~\ref{thm:proper_non_maximal}}\label{sec:main_thm}

Assume for a contradiction that there exists $G$ a non-compact connected simple Lie group with trivial center, $P \leq G$ a non-maximal parabolic subgroup, 
and $\Omega \subset G/P$ a proper quasi-homogeneous domain. Let $K \subset \Omega$ be a compact subset such that $\Aut(\Omega) \cdot K = \Omega$.

Fix some parabolic subgroup $P_0 \gneqq P$. Next consider the natural projection $\pi:G/P \rightarrow G/P_0$. Let $\Omega_0 = \pi(\Omega)$. Since $\pi(gx) = g\pi(x)$ we see that 
$\Aut(\Omega) \leq \Aut(\Omega_0)$ and 
\begin{align*}
 \Aut(\Omega) \cdot \pi(K) = \pi( \Aut(\Omega) \cdot K) = \pi(\Omega) = \Omega_0.
\end{align*}
Hence $\Omega_0$ is quasi-homogeneous. 

Next fix a parabolic subgroup $Q \leq G$ opposite to $P$. By Proposition~\ref{prop:bigger_para} there exists a unique parabolic subgroup $Q_0 \geq Q$ which is opposite to $P_0$. As in Section~\ref{sec:cara} we can define domains $\Omega^* \subset G/Q$ and $\Omega^*_0 \subset G/Q_0$. Moreover, if $\pi^*: G/Q \rightarrow G/Q_{0}$ is the natural projection, we see from 
Proposition~\ref{prop:bigger_para} that $\pi^*(\Omega^*) \subset \Omega_0^*$. Since $\pi^*$ is an open map, $\Omega_0^*$ has non-empty interior and thus by Lemma~\ref{lem:dual} we see that $\Omega_0$ is a proper domain. Thus by Corollary~\ref{cor:proper} we see that $\Aut(\Omega)$ acts properly on $\Omega_0$. 

Now fix some point $gP_0 \in \Omega_0$ and let
\begin{align*}
\Omega_g:= \pi_1^{-1}(gP_0) \cap \Omega =gP_0/P \cap \Omega.
\end{align*}
By Proposition~\ref{prop:bigger_para} part (2) the set $gP_0/P$ is not contained in any affine chart of $G/P$. In particular, $\Omega_g \neq gP_0/P$. So we can pick $z_n \in \Omega_g$ such that $z_n \rightarrow z \in \partial \Omega$. Next pick $\varphi_n \in \Aut(\Omega)$ such that $z_n \in \varphi_n K$. Since $\Aut(\Omega)$ is closed and $z \in \partial \Omega$ we see that $\varphi_n \rightarrow \infty$ in $G$. Then
\begin{align*}
 \varphi_n^{-1} (gP_0) = \pi(\varphi_n^{-1} z_n) \in \pi(K).
\end{align*}
Since $\Aut(\Omega)$ acts properly on $\Omega_0$ we have a contradiction.

\section{The general semisimple case}\label{sec:products} 

For the rest of this section, suppose $G$ is a connected semisimple Lie group with trivial center and no compact factors, $P \leq G$ is a parabolic subgroup, and $\Omega \subset G/P$ is a proper domain. 

Then there exists $G_1, \dots, G_r$ non-compact simple Lie groups each with trivial centers such that
\begin{align*}
G \cong \prod_{i=1}^r G_i.
\end{align*}
Further, we can find subgroups $P_i \leq G_i$ such that $P \cong \prod_{i=1}^r P_i$. Moreover, either $P_i = G_i$ or $P_i \leq G_i$ is a parabolic subgroup. As in the statement of Theorem~\ref{thm:products} we assume that $P_i \neq G_i$ for all $1 \leq i \leq r$.

For $1 \leq i \leq r$ let $\pi_i : G/P \rightarrow G_i/P_i$  and $\rho_i: G \rightarrow G_i$ be the natural projections. Next let $\Omega_i = \pi_i(\Omega)$. Then (by definition) $\Omega_i$ is a proper domain. Moreover, $\rho_i(\Aut(\Omega)) \subset \Aut(\Omega_i)$ and so Corollary~\ref{cor:proper} implies that $\rho_i(\Aut(\Omega))$ acts properly on $\Omega_i$. Then since $\Omega$ is a subset of $\prod_{i=1}^r \Omega_i$:

\begin{proposition}\label{prop:proper_ss}
 $\Aut(\Omega)$ acts properly on $\Omega$. 
\end{proposition}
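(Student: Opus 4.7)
The plan is to deduce properness of the $\Aut(\Omega)$-action on $\Omega$ by combining, coordinate by coordinate, the properness established in Corollary~\ref{cor:proper} for each factor $\Omega_i \subset G_i/P_i$. Fix a compact set $K \subset \Omega$ and consider
\[
S_K := \{ \varphi \in \Aut(\Omega) : \varphi K \cap K \neq \emptyset\}.
\]
Since $\Aut(\Omega) \leq G$ is closed by Proposition~\ref{prop:closed}, it suffices to show that $S_K$ is relatively compact in $G$: any accumulation point of $S_K$ in $G$ will automatically lie in $\Aut(\Omega)$.

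Given a sequence $\varphi_n \in S_K$, choose $k_n, k_n' \in K$ with $\varphi_n k_n = k_n'$. For each factor index $i$, the compact set $K_i := \pi_i(K)$ lies in the proper domain $\Omega_i$, and the equivariance $\pi_i(\varphi \cdot x) = \rho_i(\varphi) \cdot \pi_i(x)$ yields
\[
\rho_i(\varphi_n) \pi_i(k_n) = \pi_i(k_n') \in K_i,
\]
so that $\rho_i(\varphi_n) K_i \cap K_i \neq \emptyset$. Since $\rho_i(\varphi_n) \in \Aut(\Omega_i)$ and $\Aut(\Omega_i)$ acts properly on the proper domain $\Omega_i \subset G_i/P_i$ by Corollary~\ref{cor:proper} (applied to the connected non-compact simple group $G_i$), the sequence $\{\rho_i(\varphi_n)\}_n$ lies in a compact subset of $\Aut(\Omega_i)$, and in particular in a compact subset of $G_i$. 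Passing to a subsequence, $\rho_i(\varphi_n) \to \psi_i$ in $G_i$.

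Carrying out this extraction successively for each $i$ and diagonalizing, we obtain along a common subsequence
\[
\varphi_n = (\rho_1(\varphi_n), \ldots, \rho_r(\varphi_n)) \longrightarrow (\psi_1, \ldots, \psi_r) \in G,
\]
where the convergence uses that $G$ is the direct product $\prod G_i$, so $\varphi_n$ is determined by its coordinate projections. By Proposition~\ref{prop:closed} the limit lies in $\Aut(\Omega)$, so $S_K$ is relatively compact in $\Aut(\Omega)$, finishing the proof. There is no substantial obstacle: the argument is essentially a reassembly of previously established facts, and the only point to watch is that properness of each coordinate action forces convergence in the ambient $G_i$ (not merely in some quotient), which is automatic since $\Aut(\Omega_i) \leq G_i$ is closed.
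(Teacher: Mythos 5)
Your proof is correct and follows the same route as the paper: project $\Omega$ and $\Aut(\Omega)$ to each simple factor, invoke Corollary~\ref{cor:proper} on each proper domain $\Omega_i \subset G_i/P_i$, and deduce properness from the inclusion $\Omega \subset \prod_i \Omega_i$ together with closedness of $\Aut(\Omega)$. The paper states this in one line; your version merely spells out the diagonal extraction and, like the paper, tacitly restricts attention to the factors with $P_i \neq G_i$ (those with $P_i = G_i$ contribute only a point to $G/P$ and must be discarded for the properness statement to make sense).
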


We now prove Theorem~\ref{thm:products}:

\begin{proof}[Proof of Theorem~\ref{thm:products}]
Let $K \subset \Omega$ be a compact subset such that $\Aut(\Omega) \cdot K = \Omega$. 

Next let
\begin{align*}
\wh{\Omega} := \prod_{i=1}^r \Omega_i \subset G/P.
\end{align*}
Since $\rho_i(\Aut(\Omega))$ acts properly on $\Omega_i$ we see that 
\begin{align*}
\prod_{i=1}^r \rho_i(\Aut(\Omega))
\end{align*}
acts properly on $\wh{\Omega}$. Then, since 
\begin{align*}
\Aut(\Omega)\leq \prod_{i=1}^r \rho_i(\Aut(\Omega))
\end{align*}
we see that $\Aut(\Omega)$ acts properly on $\wh{\Omega}$. 

Now assume that $\Omega \neq \wh{\Omega}$. Then there exists some $x \in \partial \Omega \cap \wh{\Omega}$. Pick $x_n \in \Omega$ such that $x_n \rightarrow x$. Then there exists $\varphi_n \in \Aut(\Omega)$ such that $x_n \in \varphi_n K$. 

Let $K^\prime \subset \wh{\Omega}$ be a compact neighborhood of $x$. Then for $n$ large 
\begin{align*}
\varphi_n K \cap K^\prime \neq \emptyset. 
\end{align*}
Then since $\Aut(\Omega)$ acts properly on $\wh{\Omega}$ we see that the set $\{ \varphi_n : n \in \Nb\} \subset G$ is relatively compact. Then, since $\Aut(\Omega)$ is closed, we can pass to a subsequence such that $\varphi_n \rightarrow \varphi \in \Aut(\Omega)$. Next let $k_n = \varphi_n^{-1} x_n$. By passing to another subsequence we can suppose that $k_n \rightarrow k \in K$. But then 
\begin{align*}
x = \varphi k \in \Omega
\end{align*}
and so we have a contradiction. 
\end{proof}

\section{Proof of Theorem~\ref{thm:covering_map}}\label{sec:covering_map}

 Suppose $M$ is a compact manifold and $G$ is a connected semi-simple Lie group with trivial center and no compact factors. Let $P \leq G$ be a parabolic subgroup and let $\{ (U_\alpha, \varphi_\alpha)\}_{\alpha \in \Ac}$ be a $(G,G/P)$-structure on $M$ such that the image of the developing map is bounded in an affine chart.

Now $\Omega:=\dev(\wt{M})$ is a proper quasi-homogeneous domain. By Proposition~\ref{prop:closed} and Proposition~\ref{prop:proper_ss}, $\Aut(\Omega)$ is a Lie group which acts properly on $\Omega$. Now, by a result of Palais~\cite{P1961}, there exists a $\Aut(\Omega)$-invariant Riemannian metric on $\Omega$. 

At this point the rest of the argument follows the proof of Proposition 3.4.10 in~\cite{T1997} verbatim, but we will provide the details for the reader's convenience. 

Since the Riemannian metric on $\Omega$ is $\Aut(\Omega)$-invariant, we can pull it back to a Riemannian metric $\wt{M}$ which descends to a Riemannian metric on $M$. Then the developing map $\dev:\wt{M} \rightarrow \Omega$ is a local isometry relative to the Riemannian metrics. 

Let $\overline{B}_{\wt{M}}(x,r)$ be the closed metric ball centered at $x$ of radius $r$ in $\wt{M}$ and let $\overline{B}_{\Omega}(y,r)$ be the closed metric ball centered at $y$ of radius $r$ in $\Omega$ (relative to the Riemannian metrics). 

Since $M$ is compact, there exists some $\epsilon > 0$ such that every metric ball of radius $\epsilon$ in $\wt{M}$ is convex (that is, every two points in the metric ball are joined by a unique geodesic in the metric ball). Since $\Aut(\Omega)$ acts co-compactly on $\Omega$, by possibly shrinking $\epsilon$ we can also assume that every metric ball of radius $\epsilon$ in $\Omega$ is convex. 

Then for any $x \in \wt{M}$ the developing map restricted to $\overline{B}_{\wt{M}}(x,\epsilon)$ is a homeomorpism onto its image: if $\dev(x_1) = \dev(x_2)$ for some $x_1, x_2 \in \overline{B}_{\wt{M}}(x,\epsilon)$ distinct, then the geodesic joining $x_1$ to $x_2$ in $\overline{B}_{\wt{M}}(x,\epsilon)$ is mapped to a self intersecting geodesic in $\overline{B}_{\Omega}(\dev(x),\epsilon)$ which is a contradiction. This implies that the developing map induces an isometry between $\overline{B}_{\wt{M}}(x,\epsilon)$ and $\overline{B}_{\Omega}(\dev(x),\epsilon)$. 

Now consider $y \in \Omega$ and $x \in \dev^{-1}(\overline{B}_\Omega(y,\epsilon/2))$. Then the metric ball $\overline{B}_{\wt{M}}(x,\epsilon)$ maps isometrically into $\Omega$ and hence properly contains a copy of $\overline{B}_\Omega(y,\epsilon/2)$. Thus the entire inverse image $ \dev^{-1}(\overline{B}(y,\epsilon/2))$ is a disjoint union of such homeomorphic copies. Therefore the developing map is a covering map.

\section{Dual convexity and completeness}\label{sec:complete}

Suppose $G$ is a non-compact connected simple Lie group with trivial center and $P \leq G$ is a parabolic subgroup. Fix a parabolic subgroup $Q \leq G$ opposite to $P$. 

Let $\Omega \subset G/P$ be a proper domain and define (as before)
\begin{align*}
\Omega^* := \{ \xi \in G/Q:  \xi \text{ is opposite to every } x \in \Omega \}.
\end{align*}
Also given $\xi \in G/Q$ define (as before)
\begin{align*}
Z_\xi := \{ x \in G/P : x \text{ is not opposite to } \xi \}.
\end{align*}
Then $\xi \in \Omega^*$ if and only if $Z_\xi \cap \Omega = \emptyset$. 

Now define maps $\iota: G/P \rightarrow \Pb(V)$ and $\iota^*: G/Q \rightarrow \Pb(V^*)$ as in the discussion following Theorem~\ref{thm:repn}. And let $C_\Omega$ be the metric on $\Omega$ constructed in Section~\ref{sec:cara}. 

We now provide a characterization of the domains where $C_\Omega$ is complete using the concept of dual convexity introduced in Definition~\ref{defn:dual_convex}. 

\begin{theorem}\label{thm:completeness} 
With the notation above, $(\Omega, C_\Omega)$ is a complete metric space if and only if $\Omega$ is dual convex. 
\end{theorem}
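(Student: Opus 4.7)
The plan is to read off both directions directly from the defining formula for $C_\Omega$ together with the compactness of $\Omega^*$ (Observation~\ref{obs:dual}).

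First I would prove the contrapositive of ``$C_\Omega$-complete $\Rightarrow$ dual convex''. Suppose $x\in\partial\Omega$ is a point at which dual convexity fails; then every $\xi\in\Omega^*$ is opposite to $x$, so the continuous function $\xi\mapsto|\iota^*(\xi)(\iota(x))|$ is bounded away from $0$ on the compact set $\Omega^*$. Choose $x_n\in\Omega$ with $x_n\to x$ in the standard topology. By uniform continuity the four factors appearing in the cross-ratio defining $C_\Omega(x_n,x_m)$ converge uniformly in $(\xi,\eta)\in\Omega^*\times\Omega^*$ to nonzero limits, and the ratios therefore tend uniformly to $1$. Hence $C_\Omega(x_n,x_m)\to 0$, so $(x_n)$ is Cauchy. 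Since $C_\Omega$ generates the standard topology (Theorem~\ref{thm:cara_metric}), $(x_n)$ cannot have a $C_\Omega$-limit in $\Omega$, so $(\Omega, C_\Omega)$ is incomplete.

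For the converse, suppose $\Omega$ is dual convex and let $(x_n)$ be $C_\Omega$-Cauchy. Using compactness of $G/P$, pass to a subsequence with $x_n\to x\in\overline{\Omega}$ in the standard topology; if $x\in\Omega$ we are done. Assume for contradiction that $x\in\partial\Omega$. Dual convexity supplies a parabolic $Q'$ opposite to $P$ and $\xi_0'\in G/Q'$ with $x\in Z_{\xi_0'}$ and $Z_{\xi_0'}\cap\Omega=\emptyset$. Since all parabolics opposite to $P$ are $G$-conjugate (Theorem~\ref{thm:para_1}(2)) and $Z_\xi$ depends only on the conjugacy class of the stabilizer, we may translate to obtain $\xi_0\in\Omega^*\subset G/Q$ with $\iota^*(\xi_0)(\iota(x))=0$. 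Because $\Omega^*$ has non-empty interior (Observation~\ref{obs:dual}), Observation~\ref{obs:spanning} shows that $\iota^*(\Omega^*)$ spans $V^*$, so some $\eta_0\in\Omega^*$ satisfies $\iota^*(\eta_0)(\iota(x))\ne 0$. Evaluating the supremum defining $C_\Omega(x_n,x_m)$ at the pair $(\eta_0,\xi_0)$ (note the order) gives
\[
C_\Omega(x_n,x_m)\ \geq\ \frac{1}{2}\log\left|\frac{\iota^*(\eta_0)(\iota(x_n))\,\iota^*(\xi_0)(\iota(x_m))}{\iota^*(\eta_0)(\iota(x_m))\,\iota^*(\xi_0)(\iota(x_n))}\right|.
\]
With $m$ fixed and $n\to\infty$, the factor $\iota^*(\xi_0)(\iota(x_n))$ in the denominator tends to $0$ while the remaining three factors stay bounded away from $0$ and $\infty$ (they converge to nonzero limits, using $\xi_0,\eta_0\in\Omega^*$ and $x_m\in\Omega$). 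Hence the right hand side tends to $+\infty$, contradicting Cauchyness.

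The main obstacle I expect is organizational rather than analytical: one must verify that the supporting $\xi$ furnished by the (a priori $Q$-free) definition of dual convexity can be placed inside the specific compact set $\Omega^*\subset G/Q$ used to construct $C_\Omega$, and that there is always a compatible $\eta_0\in\Omega^*$ with $\iota^*(\eta_0)(\iota(x))\ne 0$ to pair it with. The first is handled by the conjugacy of opposite parabolics (Theorem~\ref{thm:para_1}), and the second follows from the spanning property $\Spanset \iota^*(\Omega^*)=V^*$ given by Observation~\ref{obs:spanning}, which applies because $\Omega^*$ has non-empty interior whenever $\Omega$ is a proper domain.
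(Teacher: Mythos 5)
Your proof is correct and follows essentially the same strategy as the paper: your forward direction detects incompleteness at a boundary point missed by every $Z_\xi$ with $\xi\in\Omega^*$ (the paper packages your uniform-continuity estimate on the cross-ratio as a triangle inequality in the auxiliary domain $\wh{\Omega}=\{x : x\notin Z_\xi \text{ for all } \xi\in\Omega^*\}$, where $C_{\wh{\Omega}}$ still generates the standard topology), and your reverse direction is the same cross-ratio blow-up argument, with your $\eta_0$ obtained from the spanning property of $\iota^*(\Omega^*)$ rather than from the affine chart in which $\Omega$ is bounded. I see no gaps.
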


\begin{proof}
Suppose that $(\Omega, C_{\Omega})$ is a complete metric space. Let 
\begin{align*}
\wh{\Omega} = \{ x \in G/P : x \notin Z_{\xi} \text{ for all } \xi \in \Omega^*\}.
\end{align*}
Since $\Omega^*$ is compact,  $\wh{\Omega}$ is open and $\Omega \subset \wh{\Omega}$ by the definition of $\Omega^*$. Moreover $(\wh{\Omega})^* = \Omega^*$. 
Because $\Omega^*$ has non-empty interior, $\wh{\Omega}$ is a bounded open set in an affine chart. Then the proof of Theorem~\ref{thm:cara_metric} implies 
that $C_{\wh{\Omega}}$ is a metric on $\wh{\Omega}$ (notice that $\wh{\Omega}$ may not be connected, but connectivity is not used in the proof of Theorem~\ref{thm:cara_metric})
and 
\begin{align*}
C_{\Omega} = C_{\wh{\Omega}}|_{\Omega}.
\end{align*}

We claim that $\Omega$ coincides with a connected component of $\wh{\Omega}$ which would imply that $\Omega$ is dual convex. Suppose not, then there exists some $x \in \partial \Omega \cap \wh{\Omega}$. Pick a sequence $x_n \in \Omega$ such that $x_n \rightarrow x$. Then 
\begin{align*}
C_{\Omega}(x_n, x_{n+m}) = C_{\wh{\Omega}}(x_n, x_{n+m}) \leq C_{\wh{\Omega}}(x_n, x) +C_{\wh{\Omega}}(x, x_{n+m})
\end{align*}
so $x_n$ is a Cauchy sequence. But $C_{\Omega}$ is a complete metric and so $x \in \Omega$ which is a contradiction. Thus $\Omega$ is dual convex. 

Now suppose that $\Omega$ is dual convex. We wish to show that $C_\Omega$ is a complete metric on $\Omega$. Suppose that $x_n$ is a Cauchy sequence, then by passing to a subsequence we can suppose that 
\begin{align*}
\sum_{i=1}^\infty C_\Omega(x_i, x_{i+1}) = M < \infty.
\end{align*}
Then 
\begin{align*}
C_{\Omega}(x_n,x_1) \leq M
\end{align*}
for all $n \geq 0$. Since $G/P$ is compact we can pass to a subsequence such that $x_n \rightarrow x \in \overline{\Omega}$. We claim that $x \in \Omega$. Otherwise there exists some $\xi \in \Omega^*$ such that $x \in Z_\xi$. Since $\Omega$ is bounded in some affine chart we can find some $\eta \in \Omega^*$ such that $x \notin Z_{\eta}$. Then 
\begin{align*}
C_{\Omega}(x_1, x_n) \geq   \log \abs{ \frac{ \iota^*(\xi)\Big(\iota(x_1)\Big)\iota^*(\eta)\Big(\iota(x_n)\Big)}{\iota^*(\xi)\Big(\iota(x_n)\Big)\iota^*(\eta)\Big(\iota(x_1)\Big)}}.
\end{align*}
But since $x \in Z_\xi$ the right hand side of the above expression goes to infinity as $n \rightarrow \infty$. Thus we have a contradiction and thus $x \in \Omega$. So $C_{\Omega}$ is a complete metric on $\Omega$. 
\end{proof}

We can now prove Theorem~\ref{thm:dual_convex} from the introduction. The key step is the following variant of the Hopf-Rinow theorem:

\begin{lemma}\label{lem:hopf_rinow} Suppose $(X,d)$ is a locally compact metric space and there exists a compact set $K \subset X$ such that $X = \operatorname{Isom}(X,d) \cdot K$. Then $(X,d)$ is a complete metric space. 
\end{lemma}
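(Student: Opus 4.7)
The plan is to show that $X$ admits a uniform compactness radius: there exists $r > 0$ such that every closed ball of radius $r$ in $X$ is compact. Once this is established, completeness is immediate, since any Cauchy sequence is eventually contained in some closed ball of radius $r$ about one of its terms, which is compact and therefore sequentially compact.

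First I would establish the uniform radius on the compact set $K$. By local compactness, each $x \in K$ has an open neighborhood $U_x$ with compact closure, and one can choose $r_x > 0$ so that $\overline{B(x, 2r_x)} \subset \overline{U_x}$; being closed inside a compact set, $\overline{B(x, 2r_x)}$ is itself compact. The collection $\{B(x, r_x)\}_{x \in K}$ is an open cover of $K$, so by compactness it admits a finite subcover corresponding to points $x_1, \dots, x_n$ with radii $r_1, \dots, r_n$. Setting $r := \min_i r_i$, any point $k \in K$ lies in some $B(x_i, r_i)$, and then $\overline{B(k, r)} \subset \overline{B(x_i, 2r_i)}$ is compact.

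Next, I would transfer this to all of $X$ via the cocompact isometry action. Given any $y \in X$, by hypothesis there exist $\varphi \in \operatorname{Isom}(X, d)$ and $k \in K$ with $y = \varphi(k)$. Since $\varphi$ is an isometry, $\overline{B(y, r)} = \varphi\bigl(\overline{B(k, r)}\bigr)$, which is the image of a compact set under a continuous map and hence compact. Therefore every closed ball of radius $r$ in $X$ is compact.

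Finally, given a Cauchy sequence $\{x_n\} \subset X$, choose $N$ such that $d(x_n, x_m) < r$ for all $n, m \geq N$. Then $\{x_n\}_{n \geq N}$ is contained in the compact set $\overline{B(x_N, r)}$, so it has a convergent subsequence; because the original sequence is Cauchy, the whole sequence converges. I do not expect any real obstacle in this argument; the one point requiring mild care is the uniformity step, where one must choose the same $r$ for every point in $K$ rather than a pointwise radius, but this is handled by the finite subcover argument above.
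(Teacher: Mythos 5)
Your proof is correct and follows essentially the same route as the paper: establish a uniform radius $r>0$ such that all closed $r$-balls centered in $K$ are compact via local compactness and a finite subcover, transport this to all of $X$ using the isometry group, and conclude that every Cauchy sequence eventually lies in a compact ball. If anything, you are slightly more explicit than the paper about the step where the cocompact isometry action transfers the uniform radius from $K$ to all of $X$.
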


\begin{proof}
We first claim that there exists $\delta >0$ such that for any $x \in X$ the set 
\begin{align*}
B_{X}(x;\delta):=\{ y \in X : d(x,y) \leq \delta\}
\end{align*}
is compact. Since $(X,d)$ is locally compact, for any $k \in K$ there exists $\delta_k >0$ such that $B_X(k;\delta_k)$ is compact. Then since 
\begin{align*}
K \subset \cup_{k \in K} \{ y \in X : d(k,y) < \delta_k/2\}
\end{align*}
there exists $k_1, \dots, k_N \in K$ such that 
\begin{align*}
K \subset \cup_{i=1}^N \{ y \in X : d(k_i,y) < \delta_{k_i}/2\}.
\end{align*}
Then if $\delta:= \min \{ \delta_{k_i}/2\}$ we see that $B_X(x;\delta)$ is compact for any $x \in X$. 

Now suppose that $x_n$ is a Cauchy sequence in $(X,d)$. Then there exists $N >0$ such that $d(x_n, x_N) < \delta$ for $n > N$. But then there exists a subsequence $x_{n_k}$ which converges. Thus $(X,d)$ is complete. 
\end{proof}

Now Theorem~\ref{thm:completeness} and the above lemma imply:

\begin{corollary}\label{cor:dual_convex}
Suppose $G$ is a connected non-compact simple Lie group with trivial center, $P \leq G$ is a parabolic subgroup, and $\Omega \subset G/P$ is a proper quasi-homogeneous domain. Then $\Omega$ is dual convex. 
\end{corollary}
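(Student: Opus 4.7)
The plan is to assemble the three ingredients already established just above: the existence of the $\Aut(\Omega)$-invariant metric $C_\Omega$, the Hopf--Rinow-type lemma, and the characterization of dual convexity by completeness.

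First I would verify that the hypotheses of Lemma~\ref{lem:hopf_rinow} are met by $(\Omega, C_\Omega)$. By Theorem~\ref{thm:cara_metric}, $C_\Omega$ is a metric on $\Omega$ which generates the standard topology and is $\Aut(\Omega)$-invariant. Since $\Omega \subset G/P$ is an open subset of a manifold, the standard topology is locally compact, and therefore so is $(\Omega, C_\Omega)$. Moreover, $\Aut(\Omega)$ acts on $(\Omega, C_\Omega)$ by isometries, so
\begin{align*}
\Aut(\Omega) \subseteq \operatorname{Isom}(\Omega, C_\Omega).
\end{align*}

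Next, using quasi-homogeneity, choose a compact set $K \subset \Omega$ with $\Aut(\Omega) \cdot K = \Omega$. Then \emph{a fortiori} $\operatorname{Isom}(\Omega, C_\Omega) \cdot K = \Omega$, so Lemma~\ref{lem:hopf_rinow} applies and yields that $(\Omega, C_\Omega)$ is a complete metric space.

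Finally, Theorem~\ref{thm:completeness} asserts that completeness of $(\Omega, C_\Omega)$ is equivalent to dual convexity of $\Omega$. Applying the ``complete $\Rightarrow$ dual convex'' direction finishes the proof. There is no genuine obstacle here; the corollary is essentially a matter of concatenating Theorem~\ref{thm:cara_metric}, Lemma~\ref{lem:hopf_rinow}, and Theorem~\ref{thm:completeness}, with quasi-homogeneity supplying precisely the transitivity-up-to-a-compact-set hypothesis needed by the Hopf--Rinow-type lemma.
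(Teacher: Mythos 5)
Your proof is correct and is essentially identical to the paper's argument: the paper likewise deduces the corollary by combining Lemma~\ref{lem:hopf_rinow} (applied via quasi-homogeneity and the $\Aut(\Omega)$-invariance of $C_\Omega$ from Theorem~\ref{thm:cara_metric}) with the ``complete $\Rightarrow$ dual convex'' direction of Theorem~\ref{thm:completeness}.
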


\bibliographystyle{alpha}
\bibliography{hilbert}

\end{document}